\title[Total positivity for cominuscule Grassmannians]
{Total positivity for cominuscule Grassmannians}
\subjclass[2000]{Primary 05Exx; Secondary 20G20, 14Pxx}
\keywords{Total positivity, Grassmannian,
CW complexes}
\newcommand{\cellsize}{11}
\newlength{\cellsz} \setlength{\cellsz}{\cellsize\unitlength}
\newsavebox{\cell}
\sbox{\cell}{\begin{picture}(\cellsize,\cellsize)
\put(0,0){\line(1,0){\cellsize}}
\put(0,0){\line(0,1){\cellsize}}
\put(\cellsize,0){\line(0,1){\cellsize}}
\put(0,\cellsize){\line(1,0){\cellsize}}
\end{picture}}
\newcommand\cellify[1]{\def\thearg{#1}\def\nothing{}%
\ifx\thearg\nothing
\vrule width0pt height\cellsz depth0pt\else
\hbox to 0pt{\usebox{\cell} \hss}\fi%
\vbox to \cellsz{
\vss
\hbox to \cellsz{\hss$#1$\hss}
\vss}}
\newcommand\tableaux[1]{\vcenter{\vbox{\let\\\cr
\baselineskip -16000pt \lineskiplimit 16000pt \lineskip 0pt
\ialign{&\cellify{##}\cr#1\crcr}}}}
\newcommand\tabl[1]{\vtop{\let\\\cr
\baselineskip -16000pt \lineskiplimit 16000pt \lineskip 0pt
\ialign{&\cellify{##}\cr#1\crcr}}}
\def\Le{\hbox{\rotatedown{$\Gamma$}}}
\def\vblack(#1, #2)#3{\cnode*[linecolor=black](#1, #2){3}{#3}}
\def\vwhite(#1,#2)#3{\cnode[linecolor=black,fillcolor=white,fillstyle=solid](#1,#2){3}{#3}}
\def\tbox(#1,#2)#3{
\x=#1 \y=#2
\multiply\x by 12
\multiply\y by 12
\z=\x \t=\y
\advance\z by 12
\advance\t by 12
\psline(\x,\y)(\x,\t)(\z,\t)(\z,\y)(\x,\y)
\advance\x by 6
\advance\y by 6
\rput(\x,\y){{\bf #3}}}
\font\co=lcircle10
\def\jr{\smash{\raise2pt\hbox{\co \rlap{\rlap{\char'005} \char'007}}
               \raise6pt\hbox{\rlap{\vrule height6.5pt}}
                \raise2pt\hbox{\rlap{\hskip4pt \vrule
          height0.4pt depth0pt
                width7.7pt}}}}
\def\textcross{\ \smash{\lower4pt\hbox{\rlap{\hskip4.15pt\vrule height14pt}}
                \raise2.8pt\hbox{\rlap{\hskip-3pt \vrule height.4pt depth0pt
                        width14.7pt}}}\hskip12.7pt}
\def\textelbow{\ \hskip.1pt\smash{\raise2.75pt%
               \hbox{\co \hskip 4.15pt\rlap{\rlap{\char'005} \char'007}
%                \hbox{\co \hskip 4.15pt\rlap{\rlap{\char'004} \char'006}
                \lower6.8pt\rlap{\vrule height3.5pt}
                \raise3.6pt\rlap{\vrule height3.5pt}}
                \raise2.8pt\hbox{%
                  \rlap{\hskip-7.15pt \vrule height.4pt depth0pt
width3.5pt}%
                  \rlap{\hskip4.05pt \vrule height.4pt depth0pt
width3.5pt}}}
                \hskip8.7pt}
\def\boxcross{\ \smash{\lower5.5pt\hbox{\rlap{\hskip4.1pt\vrule height11.8pt}}
                \raise0pt\hbox{\rlap{\hskip-1pt \vrule height.4pt depth0pt
                        width12pt}}}\hskip12.7pt}
\def\boxelbow{\ \hskip.1pt\smash{%
               \hbox{\co \hskip 4.15pt\rlap{\mathsurround=0pt\rlap{\mathsurround=0pt\char'005}\char'007}
%                \hbox{\co \hskip 4.15pt\rlap{\rlap{\char'004} \char'006}
                \lower6pt\rlap{\vrule height2.2pt}
                \raise3pt\rlap{\vrule height3pt}}
                \hbox{%
                  \rlap{\hskip-5.3pt \vrule height.4pt depth0pt
width1.8pt}%
                  \rlap{\hskip4.05pt \vrule height.4pt depth0pt
width2.5pt}}}
                \hskip8.7pt}
\newtheorem{theorem}{Theorem}[section]
\newtheorem{proposition}[theorem]{Proposition}
\newtheorem{prop}[theorem]{Proposition}
\newtheorem{lemma}[theorem]{Lemma}
\newtheorem{example}[theorem]{Example}
\newtheorem{corollary}[theorem]{Corollary}
\newtheorem{problem}[theorem]{Problem}
\newtheorem{definition}[theorem]{Definition}
\theoremstyle{remark}
\newtheorem{remark}[theorem]{Remark}
\def\D{\mathcal{D}}
\def\R{\mathbb{R}}
\def\C{\mathbb{C}}
\def\B{\mathcal{B}}
\def\s{{\mathbb S}}
\def\P{\mathbb{P}}
\def\I{\mathcal{I}}
\def\wire{{\rm wire}}
\def\Le{\hbox{\rotatedown{$\Gamma$}}}
\def\mytilde{\kern-.015in\hbox{\lower.03in\hbox{\~{}}}\kern-.01in}
\def\v{\mathbf{v}}
\def\w{\mathbf{w}}
\def\Hom{\mathrm{Hom}}
\def\Sl{\mathrm{SL}}
\def\ss{\tilde{s}}
\def\Gr{{\rm Gr}}
\def\OG{{\rm OG}}
\def\Q{{\mathbb Q}}
\def\OP{{\mathbb{OP}}}
\def\LG{{\rm LG}}
\def\Fr{{\rm Fr}}
\newcommand{\commentout}[1]{}
\newcommand{\thmrefer}[1]{\renewcommand\thetheorem
  {\protect\ref{#1}}\addtocounter{theorem}{-1}}
\author{Thomas Lam}
\address{Department of Mathematics, Harvard University, Cambridge MA
02138 USA} \email{tfylam@math.harvard.edu, lauren@math.harvard.edu}
\thanks{T. L. was partially supported by NSF DMS--0600677.}
\author{Lauren Williams}
\begin{document}

\begin{abstract}
In this paper we explore the combinatorics of the non-negative part
$(G/P)_{\geq 0}$ of a cominuscule
Grassmannian. For each such
Grassmannian we define $\Le$-diagrams -- certain fillings of
generalized Young diagrams which are in bijection with the cells of
$(G/P)_{\geq 0}$. In the classical cases, we describe $\Le$-diagrams
explicitly in terms of pattern avoidance.  We also define a game on diagrams,
by which one can reduce an arbitrary diagram to a $\Le$-diagram. We
give enumerative results and relate our $\Le$-diagrams to other
combinatorial objects.  Surprisingly, the totally non-negative cells
in the open Schubert cell of the odd and even orthogonal
Grassmannians are (essentially) in bijection with preference
functions and atomic preference functions respectively.
\end{abstract}

\maketitle

\setcounter{tocdepth}{1}
\tableofcontents

\section{Introduction}

The classical theory of total positivity concerns matrices in which
all minors are non-negative.  While this theory was pioneered in the
1930's, interest in this subject has been renewed on account of the
work of Lusztig \cite{Lusztig3, Lusztig2}. Motivated by surprising
connections he discovered between his theory of canonical bases for
quantum groups and the theory of total positivity, Lusztig extended
this subject by introducing the totally non-negative points $G_{\geq
0}$ in an arbitrary reductive group $G$ and the totally non-negative
part $(G/P)_{\geq 0}$ of a real flag variety $G/P$. Lusztig
conjectured a cell decomposition for $(G/P)_{\geq 0}$, which was
proved by Rietsch \cite{Rietsch1}.  Cells of $(G/P)_{\geq 0}$
correspond to pairs $(x,w)$ where $x,w \in W$, $x\leq w$ in Bruhat
order, and $w$ is a minimal-length coset representative of
$W^J=W/W_J$.  Here $W_J \subset W$ is the parabolic subgroup
corresponding to $P$.

%More precisely, a choice of flag variety $G/P$ determines a Weyl
%group $W$ and a parabolic subgroup $W_J$. Cells of $(G/P)_{\geq 0}$
%correspond to pairs $(x,w)$ where $x,w \in W$, $x\leq w$ in Bruhat
%order, and $w$ is a minimal-length coset representative of
%$W^J=W/W_J$.  A choice of reduced expression for $w$ determines a
%{\it positive distinguished subexpression} of $x$, and this data
%gives rise to a parametrization of the corresponding cell of
%$(G/P)_{\geq 0}$  \cite{MR}.

Coming from a more combinatorial perspective, Postnikov
\cite{Postnikov} explored the combinatorics of the totally
non-negative part of the type $A$ Grassmannian.  He described and
parameterized cells using certain fillings of Young diagrams by
$0$'s and $+$'s which he called {\it $\Le$-diagrams}, and which are
defined using the avoidance of the {\it $\Le$-pattern}. The
$\Le$-diagrams seem to have a great deal of intrinsic interest: they
were independently discovered by Cauchon \cite{Cauchon1} in the
context of primes in quantum algebras (see also \cite{LLR}); they are in bijection with
other combinatorial objects, such as decorated permutations
\cite{Postnikov}; and they are linked to the asymmetric exclusion
process \cite{CW}.

In this paper we use work of Stembridge \cite{Ste} and of Proctor
\cite{Pro}, to generalize $\Le$-diagrams to the case of cominuscule
Grassmannians. In this case the poset $W^J$ is a distributive
lattice and hence can be identified with the lattice of order ideals
of another poset $Q^J$. It turns out that the poset $Q^J$ can always
be embedded into a two-dimensional square lattice.  Each $w\in W^J$
corresponds to an order ideal $O_w \subset Q^J$ which can be
represented by a generalized Young diagram.  We then identify cells
of the non-negative part of a cominuscule Grassmannian with certain
fillings, called $\Le$-diagrams, of $O_w$ by $0$'s and $+$'s.
Arbitrary fillings of $O_w$ by $0$'s and $+$'s correspond to
subexpressions of a reduced expression for $w$; the $\Le$-diagrams
correspond to {\it positive distinguished subexpressions} \cite{MR}.

We give concise descriptions of $\Le$-diagrams for type $B$ and $D$
cominuscule Grassmannians in terms of pattern avoidance.
Unfortunately there does not seem to exist a concise description for
the remaining $E_7$ and $E_8$ cominuscule Grassmannians.  We also
define a game (the {\it $\Le$-game}) that one can play on diagrams
filled with $0$'s and $+$'s, by which one can go from any such
diagram to a $\Le$-diagram.

We then explore the combinatorial properties of $\Le$-diagrams. We
define type $B$ decorated permutations and show that they are in
bijection with $\Le$-diagrams.  We give some formulas and
recurrences for the numbers of $\Le$-diagrams. Finally, we show that
there are twice as many type $(B_n,n)$ $\Le$-diagrams in the open
Schubert cell as {\it preference functions} of length $n$, while
type $(D_n,n)$ $\Le$-diagrams in the open Schubert cell are in
bijection with {\it atomic preference functions} of length $n$.

{\bf Organization.} In Section \ref{sec:lusztig}, we give the
relevant background on total positivity for flag varieties, and in
Section \ref{s:comin}, we give background on cominuscule
Grassmannians. In Section \ref{s:Le}, we introduce $\Le$-diagrams,
$\Le$-moves, and the $\Le$-game.  The following  five sections are
devoted to characterizing $\Le$-diagrams for the cominuscule
Grassmannians of types $A$, $B$ and $D$. In Section \ref{DecPerms},
we review type $A$ decorated permutations and describe type B
decorated permutations, and in Section \ref{s:enumerate}, we give
enumerative results, including those on preference functions.

{\bf Acknowledgements.} We are grateful to
Frank Sottile and Alex Postnikov for interesting discussions.

\section{Total positivity for flag varieties}
\label{sec:lusztig} We recall basic facts concerning the totally
non-negative part $(G/P_J)_{\geq 0}$ of a flag variety and its cell
decomposition.

%\begin{remark}
%Before giving the general definition of $(G/P_J)_{\geq 0}$, let us first
%point out that in type A, the totally non-negative part of a flag
%variety $G/P_J$ can be defined to be the subset of $G/P_J$ in its
%Plucker embedding such that all Plucker coordinates are non-negative.
%\end{remark}

\subsection{Pinning}

Let $G$ be a semisimple linear algebraic group over $\C$ split over
$\R$, with split torus $T$.  Identify $G$ (and related spaces)
with their real points and consider them with their real topology.
Let $\Phi \subset \Hom(T,\R^*)$ the set of roots and choose
a system of positive roots $\Phi^+$.  Denote by $B^+$ the
Borel subgroup corresponding to $\Phi^+$.
% and by $U^+$ its unipotent
%radical.
Let $B^-$ be the opposite Borel subgroup $B^-$ such that
$B^+ \cap B^- = T$.
%, and let $U^-$ be its unipotent radical.
Let $U^+$ and $U^-$ be the unipotent radicals of $B^+$ and $B^-$.

Denote the set of simple roots by $\Pi = \{\alpha_i \ \vline \ i \in
I \} \subset \Phi^+$. For each $\alpha_i \in \Pi$ there is an
associated homomorphism $\phi_i : \Sl_2 \to G$, generated by
$1$-parameter subgroups $x_i(t) \in U^+$, $y_i(t) \in U^-$, and
$\alpha_i^\vee(t) \in T$.  The datum $(T, B^+, B^-, x_i, y_i; i \in
I)$ for $G$ is called a {\it pinning}.  Let $W = N_G(T)/T$ be the
Weyl group and for $w \in W$ let $\dot{w} \in N_G(T)$ denote a
representative for $w$.

%Consider the $1$-parameter subgroups in $G$ (landing in $U^+, U^-$,
%nd $T$, respectively) defined by
%\begin{equation*}
%x_i(m) = \phi_i \left(
%                   \begin{array}{cc}
%                     1 & m \\ 0 & 1\\
%                   \end{array} \right) ,\
%y_i(m) = \phi_i \left(
%                   \begin{array}{cc}
%                     1 & 0 \\ m & 1\\
%                   \end{array} \right) ,\
%\alpha_i^{\vee}(t) = \phi_i \left(
%                   \begin{array}{cc}
%                     t & 0 \\ 0 & t^{-1}\\
%                   \end{array} \right) ,
%\end{equation*}
%where $m \in \R, t \in \R^*, i \in I$.
%  The standard pinning for
%$\Sl_d$ consists of the diagonal, upper-triangular, and lower-triangular
%matrices, along with the simple root subgroups
%$x_i (m) = I_d + mE_{i,i+1}$ and
%$y_i (m) = I_d + mE_{i+1,i}$ where
%$I_d$ is the identity matrix and $E_{i,j}$ has a $1$ in
%position $(i,j)$ and zeroes elsewhere.

%The Weyl group $W = N_G(T) / T$ acts on $X(T)$ permuting the roots
%$\Phi$.  The simple reflections $s_i \in W$ are given explicitly by
%$s_i:= \dot{s_i} T$ where $\dot{s_i} :=
%                 \phi_i \left(
%                   \begin{array}{cc}
%                     0 & -1 \\ 1 & 0\\
%                   \end{array} \right)$
%and any $w \in W$ can be expressed as a product $w = s_{i_1} s_{i_2}
%\dots s_{i_m}$ with $\ell(w)$ factors.  We set $\dot{w} =
%\dot{s_{i_1}} \dot{s_{i_2}} \dots \dot{s_{i_m}}$.

%We can identify the flag variety $G/B$ with the variety
%$\B$ of Borel subgroups, via
%\begin{equation*}
%gB^+ \longleftrightarrow g \cdot B^+ := gB^+ g^{-1}.
%\end{equation*}

\subsection{Totally non-negative parts of flag varieties}
Let $J \subset I$.  The parabolic subgroup $W_J \subset W$
corresponds to a parabolic subgroup $P_J$ in $G$ containing $B^+$.
Namely, $P_J = \sqcup_{w \in W_J} B^+ \dot{w} B^+$.  Let $\pi^J:
G/B^+ \to G/P_J$ be the natural projection.

The totally non-negative part $U_{\geq 0}^-$ of $U^-$ is defined to
be the semigroup in $U^-$ generated by the $y_i(t)$ for $t \in
\R_{\geq 0}$.  The totally non-negative part $(G/P_J)_{\geq 0}$ of
the partial flag variety $G/P_J$ is the closure of the image of
$U^-_{\geq 0}$ in $G/P_J$.

\subsection{Cell decomposition}
We have the Bruhat decompositions
\begin{equation*}
G/B^+ = \sqcup_{w \in W} B^+ \dot{w} B^+/B^+ =
    \sqcup_{w \in W} B^- \dot{w} B^+/B^+
\end{equation*}
of $G/B^+$ into $B^+$-orbits called {\it Bruhat cells}, and
$B^-$-orbits called {\it opposite Bruhat cells}.  For $v, w \in W$
define
\begin{equation*}
R_{v,w}: = B^+ \dot{w} B^+/B^+ \cap B^- \dot{v} B^+/B^+.
\end{equation*}
The intersection $R_{v,w}$ is non-empty precisely if $v \leq w$, and
in that case is irreducible of dimension $\ell(w) - \ell(v)$.  Here
$\leq$ denotes the {\it Bruhat order} (or {\it strong order}) of
$W$ \cite{BB}.  For $v, w
\in W$ with $v \leq w$, let
\begin{equation*}
R_{v,w ; >0} := R_{v,w} \cap (G/B^+)_{\geq 0}.
\end{equation*}

%We write $W^J$ (respectively $W^J_{max}$) for the set of minimal
%(respectively maximal) length coset representatives of $W/W_J$.

We write $W^J$ for the set of minimal length coset representatives
of $W/W_J$.  The Bruhat order of $W^J$ is the
order inherited by restriction from $W$.  Let $\I^J \subset W \times
W^J$ be the set of pairs $(x,w)$ with the property that $x \leq w$.
Given $(x,w) \in \I^J$, we define $P_{x,w; >0}^J := \pi^J(R_{x,w;
>0})$.  This decomposition of $(G/P_J)_{\geq 0}$ was introduced by
Lusztig \cite{Lusztig2}.  Rietsch showed that this is a cell
decomposition:

%\begin{definition} \cite{Lusztig2} \label{index}
%Let $\I^J \subset W \times W^J$ be the set of
%pairs $(x,w)$ with the property that $x \leq w$.
%Given $(x,w) \in \I^J$,
%we define $P_{x,w; >0}^J := \pi^J(R_{x,w; >0}).$
%\end{definition}

\begin{theorem}\cite{Rietsch1} \label{thm:cell}
The sets $P_{x,w;
>0}^J$ are semi-algebraic cells of dimension $\ell(w) - \ell(x)$,
giving a cell decomposition of $(G/P_J)_{\geq 0}$.
\end{theorem}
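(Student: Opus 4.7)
The plan is to reduce to the full flag variety case ($J = \emptyset$) and then descend via the projection $\pi^J : G/B^+ \to G/P_J$. The technical engine will be the Marsh--Rietsch parametrization of Richardson cells by products of positive real parameters, indexed by \emph{positive distinguished subexpressions} (PDS) of reduced words, which is alluded to later in the paper. Given a reduced word $\mathbf{w} = s_{i_1}\cdots s_{i_\ell}$ of $w$ and $v \leq w$, there is a unique subexpression $\mathbf{v}_+ = (v_0, v_1, \ldots, v_\ell)$ with $v_0 = e$, $v_\ell = v$, $v_j \in \{v_{j-1}, v_{j-1} s_{i_j}\}$, such that $v_j = v_{j-1}$ occurs only when $v_{j-1} s_{i_j} > v_{j-1}$. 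Setting $J_\circ = \{j : v_j = v_{j-1}\}$ (which has cardinality $\ell(w)-\ell(v)$), I would define
$$g_{\mathbf{v}_+} : \R_{>0}^{J_\circ} \longrightarrow G/B^+, \qquad (t_j)_{j\in J_\circ} \longmapsto A_1 A_2 \cdots A_\ell \cdot B^+,$$
with $A_j = y_{i_j}(t_j)$ when $j \in J_\circ$ and $A_j = \dot{s}_{i_j}$ otherwise.

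The first core step is to show $g_{\mathbf{v}_+}$ is a homeomorphism onto $R_{v,w;>0}$. I would proceed by induction on $\ell$, absorbing or normalizing the last factor via commutation identities in the pinning (notably $\dot{s}_i y_i(t) = y_i(t^{-1})\alpha_i^\vee(t) x_i(t^{-1})$) together with a Deodhar-style analysis classifying subexpressions of $\mathbf{w}$ that multiply to $v$. Injectivity will follow from uniqueness of the PDS; the image lies in $R_{v,w;>0}$ by tracking how each partial product $A_1\cdots A_k$ sits in the $B^\pm$-Bruhat strata; and the image exhausts $R_{v,w;>0}$ by a dimension count combined with the density of $U^-_{>0}$-points in $(G/B^+)_{\geq 0}$.

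The second step is to descend to $G/P_J$. When $w \in W^J$, the minimality of $w$ in its coset ensures that $\pi^J : B^+\dot{w}B^+/B^+ \to B^+\dot{w}P_J/P_J$ is an isomorphism of Schubert cells; so, since $R_{x,w} \subset B^+\dot{w}B^+/B^+$, the restriction $\pi^J : R_{x,w;>0} \to P^J_{x,w;>0}$ is a homeomorphism onto a semi-algebraic cell of dimension $\ell(w) - \ell(x)$. Disjointness of the $P^J_{x,w;>0}$ then follows from the Bruhat decomposition of $G/P_J$: the $B^+$-orbit of a point pins down $w \in W^J$, and the $B^-$-orbit pins down $x$.

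The main obstacle I anticipate is showing that these cells actually \emph{exhaust} $(G/P_J)_{\geq 0}$. Concretely, one must verify $(\pi^J)^{-1}(P^J_{x,w;>0}) \cap (G/B^+)_{\geq 0} = R_{x,w;>0}$, i.e.\ every point of $(G/P_J)_{\geq 0}$ lifts to a point of $(G/B^+)_{\geq 0}$ lying in some $R_{x,w;>0}$ with $w\in W^J$. This compatibility between total positivity and the $P_J/B^+$-fibers of $\pi^J$ will be the most delicate step; my strategy is to show that each totally non-negative $P_J$-coset contains a canonical $(G/B^+)_{\geq 0}$-representative lying in the Bruhat cell of the minimal coset element $w$, exploiting the Levi decomposition of $P_J$ and the fact that its unipotent radical and Levi are compatible with the positive generators $y_i(t)$.
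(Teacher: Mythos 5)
First, a point of reference: the paper does not prove this theorem at all --- it is quoted from Rietsch's thesis \cite{Rietsch1} as a black box, so there is no internal argument to compare yours against. Your outline follows the route that is now standard in the literature, namely the Marsh--Rietsch parametrization of $R_{v,w;>0}$ by positive distinguished subexpressions \cite{MR} followed by descent along $\pi^J$, and this is exactly the machinery the present paper invokes elsewhere (Lemma \ref{l:positive}, Proposition \ref{prop:cells}). As a roadmap it is the right one.

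That said, the decisive steps are either missing or wrong as stated. (i) Surjectivity of $g_{\mathbf{v}_+}$ onto $R_{v,w;>0}$ is the hard half of the Marsh--Rietsch theorem; ``a dimension count combined with density of $U^-_{>0}$'' cannot deliver it, because the sets involved are semi-algebraic (equality of dimensions does not force equality of sets, nor even connectedness of $R_{v,w;>0}$), and density of the image of $U^-_{>0}$ only controls the top cell $R_{e,w_0;>0}$, not the lower strata, which must be analyzed via the Deodhar decomposition. (ii) Your disjointness argument is incorrect: the $B^-$-orbits on $G/P_J$ are indexed by $W/W_J$, so the $B^-$-orbit of a point determines only the coset $xW_J$, not $x$ itself, whereas $\I^J$ ranges over \emph{all} $x\in W$ with $x\le w$. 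Distinct $x,x'$ in the same coset genuinely occur --- e.g.\ for $\Gr_{1,3}$ (type $A_2$, $W_J=\langle s_2\rangle$) both $(e,s_2s_1)$ and $(s_2,s_2s_1)$ lie in $\I^J$ --- and their disjointness is part of what must be proved, not a consequence of the Bruhat decomposition. (iii) The exhaustion step you flag as delicate is the genuine content on the $G/P_J$ side: one must show that for arbitrary $v\le w$ in $W$ the image $\pi^J(R_{v,w;>0})$ equals $P^J_{x,w';>0}$ for a suitable pair in $\I^J$ with $w'$ the minimal representative of $wW_J$; your ``canonical representative in each fiber'' strategy is a plausible opening but is not carried out. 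So the proposal is a correct plan whose three load-bearing steps remain to be supplied.
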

In fact the cell decomposition of Theorem \ref{thm:cell}
is a CW complex
\cite{PSW, RW}.
\section{(Co)minuscule Grassmannians}\label{s:comin}

We keep the notation of Section~\ref{sec:lusztig}. We say the
parabolic $P_J$ is {\it maximal} if $J = I \setminus \{j\}$ for some
$j \in I$.  We may then denote the parabolic by $P_j:= P_J$ and the
partial flag variety by $G/P_j:= G/P_J$, which we loosely call a
Grassmannian.  Similarly, we use the notation $\I^j$, $W^j$, $W_j$
and $W^j_{\max}$.

For a maximal parabolic subgroup $P_j$ we will call $P_j$, the flag
variety $G/P_j$, and the simple root $\alpha_j$ {\it cominuscule} if
whenever $\alpha_j$ occurs in the simple root expansion of a
positive root $\gamma$ it does so with coefficient one. Similarly,
one obtains the definition of {\it minuscule} by replacing roots
with coroots.  The (co)minuscule Grassmannian's have been classified
and are listed below, with the corresponding Dynkin diagrams (plus
choice of simple root) shown in Figure~\ref{fig:comin}.

\begin{proposition}
The maximal parabolic $P_j$, the flag variety $G/P_j$, and the
simple root $\alpha_j$ are (co)minuscule if we are in one of the
following situations:
\begin{enumerate}
\item
$W = A_n$ and $j \in [1,n]$ is arbitrary
\item
$W = B_n$ (or $C_n$) and $j = 1$ or $n$
\item
$W = D_n$ (with $n \geq 4$) and $j = 1, n-1$ or $n$
\item
$W = E_6$ and $j = 1$ or $6$
\item
$W = E_7$ and $j = 1$.
\end{enumerate}
\end{proposition}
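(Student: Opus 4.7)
The plan is to recast the (co)minuscule condition as a statement about the highest root and then inspect each irreducible type. Fix an irreducible root system $\Phi$ with highest root $\theta$. Every positive root $\gamma$ satisfies $\theta - \gamma \in \sum_i \Z_{\geq 0}\,\alpha_i$, so the simple-root coefficients of $\gamma$ are bounded termwise by those of $\theta$; moreover each simple root appears with strictly positive coefficient in $\theta$, since $\Phi$ is irreducible. Hence $\alpha_j$ is cominuscule if and only if the coefficient of $\alpha_j$ in $\theta$ equals one. The same argument applied inside the dual root system $\Phi^\vee$ yields the analogous criterion: $\alpha_j$ is minuscule if and only if $\alpha_j^\vee$ has coefficient one in the highest root of $\Phi^\vee$.

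The second step is a direct inspection of tabulated highest roots. In $A_n$ every coefficient of $\theta$ is one, so every $j$ qualifies. In $B_n$, $\theta = \alpha_1 + 2(\alpha_2 + \cdots + \alpha_n)$ produces $j = 1$ as cominuscule, while $\Phi^\vee$ (of type $C_n$) contributes $j = n$ as minuscule; the situation in $C_n$ is dual. In $D_n$, $\theta = \alpha_1 + 2\alpha_2 + \cdots + 2\alpha_{n-2} + \alpha_{n-1} + \alpha_n$, yielding $j \in \{1, n-1, n\}$ (self-dual, so cominuscule and minuscule coincide). In $E_6$ the only simple roots with coefficient one in $\theta$ are the two at the ends of the long arms, labelled $1$ and $6$ in Figure~\ref{fig:comin}; in $E_7$ there is exactly one such simple root, labelled $1$. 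In $E_8$, $F_4$, and $G_2$, both $\theta$ and the highest root of $\Phi^\vee$ have every simple-root coefficient at least two, so these types contribute nothing, completing the classification.

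The argument has no conceptually hard step — the content is the termwise bound in the first paragraph, followed by a catalogue check. The only subtlety worth flagging is the convention for duality: ``minuscule'' is governed by the highest root of $\Phi^\vee$, not by the coroot $\theta^\vee$ of the highest root, and these differ in the non-simply-laced types. This is precisely what causes $(B_n, n)$ and $(C_n, 1)$ to be minuscule but not cominuscule (and $(B_n, 1)$, $(C_n, n)$ the reverse), all four nevertheless appearing in the proposition's single list of ``(co)minuscule'' cases.
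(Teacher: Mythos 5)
Your argument is correct and complete, but note that the paper itself offers no proof of this proposition: it simply points the reader to Billey--Lakshmibai \cite{BL}. What you supply is the standard self-contained argument, and it is the right one. The reduction of the (co)minuscule condition to the statement that $\alpha_j$ has coefficient one in the highest root $\theta$ is exactly the content of the termwise domination $\theta - \gamma \in \sum_i \Z_{\geq 0}\alpha_i$, and your observation that irreducibility forces every simple root to actually occur in $\theta$ (so that coefficient $\geq 2$ in $\theta$ genuinely violates the definition) closes the converse direction. You are also right to insist that ``minuscule'' is read off from the highest root of $\Phi^\vee$ rather than from $\theta^\vee$; this is precisely why the four non-simply-laced cases $(B_n,1)$, $(B_n,n)$, $(C_n,1)$, $(C_n,n)$ all land in the proposition's combined list even though each is only one of minuscule or cominuscule. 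The one point to correct is in type $E_7$: in the Bourbaki labelling used in Figure~\ref{fig:comin} (chain $1$--$3$--$4$--$5$--$6$--$7$ with node $2$ attached to node $4$), the highest root has coefficients $(2,2,3,4,3,2,1)$, so the unique coefficient-one node is the one labelled $7$, not $1$ --- consistent with the marked node in the figure but not with the proposition's ``$j=1$,'' which appears to be a labelling slip in the paper that you have inherited rather than caught. This does not affect the substance of your classification (there is exactly one such node in $E_7$, and $E_8$, $F_4$, $G_2$ contribute nothing), but the label should be $7$.
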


\begin{figure}
\begin{tabular}{||c|c|c||}\hline
\text{Root system} & \text{Dynkin Diagram} &\text{Grassmannian} \\\hline \hline $A_n$ &
\setlength{\unitlength}{3mm}
\begin{picture}(11,3)
\multiput(0,1.5)(2,0){6}{$\circ$}
\multiput(0.55,1.85)(2,0){5}{\line(1,0){1.55}}
\put(6,1.5){$\bullet$} \put(0,0){$1$} \put(2,0){$2$}
\put(3.5,0){$\cdots$} \put(6,0){$j$} \put(7.5,0){$\cdots$}
\put(10,0){$n$}
\end{picture}
&
the usual Grassmannian $\Gr_{j,n+1}$
\\ %
%\hline $B_n$ &
%\setlength{\unitlength}{3mm} \begin{picture}(11,3)
%\multiput(0,1.5)(2,0){6}{$\circ$}
%\multiput(0.55,1.85)(2,0){4}{\line(1,0){1.55}}
%\multiput(8.55,1.75)(0,.2){2}{\line(1,0){1.55}} \put(8.75,1.52){$>$}
%\put(0,1.5){$\bullet$} \put(0,0){$1$} \put(2,0){$2$}
%\put(4,0){$\cdots$}
%\put(7,0){$\cdots$} \put(10,0){$n$}\end{picture}
%\\ %
\hline $B_n, n\geq 2$ & \setlength{\unitlength}{3mm}
\begin{picture}(11,3) \multiput(2,1.5)(2,0){5}{$\circ$}
\multiput(0.55,1.85)(2,0){4}{\line(1,0){1.55}}
\multiput(8.55,1.75)(0,.2){2}{\line(1,0){1.55}} \put(8.85,1.53){$>$}
\put(0,1.5){$\bullet$}
\put(10,1.5){$\circ$} \put(0,0){$1$} \put(2,0){$2$}
\put(4,0){$\cdots$} \put(7,0){$\cdots$} \put(10,0){$n$}
\end{picture} &

the odd dimensional quadric $\Q^{2n-1}$
\\
\hline $B_n, n\geq 2$ & \setlength{\unitlength}{3mm}
\begin{picture}(11,3) \multiput(2,1.5)(2,0){5}{$\circ$}
\multiput(0.55,1.85)(2,0){4}{\line(1,0){1.55}}
\multiput(8.55,1.75)(0,.2){2}{\line(1,0){1.55}} \put(8.85,1.53){$>$}
\put(0,1.5){$\circ$}
\put(10,1.5){$\bullet$} \put(0,0){$1$} \put(2,0){$2$}
\put(4,0){$\cdots$} \put(7,0){$\cdots$} \put(10,0){$n$}
\end{picture} &

odd orthogonal Grassmannian $\OG_{n,2n+1}$
\\
\hline $C_n, n\geq 2$ & \setlength{\unitlength}{3mm}
\begin{picture}(11,3) \multiput(2,1.5)(2,0){5}{$\circ$}
\multiput(0.55,1.85)(2,0){4}{\line(1,0){1.55}}
\multiput(8.55,1.75)(0,.2){2}{\line(1,0){1.55}} \put(8.85,1.53){$<$}
\put(0,1.5){$\bullet$}
\put(10,1.5){$\circ$} \put(0,0){$1$} \put(2,0){$2$}
\put(4,0){$\cdots$} \put(7,0){$\cdots$} \put(10,0){$n$}
\end{picture} &
the projective space $\P^{2n-1}$
\\

\hline $C_n, n\geq 2$ & \setlength{\unitlength}{3mm}
\begin{picture}(11,3) \multiput(2,1.5)(2,0){5}{$\circ$}
\multiput(0.55,1.85)(2,0){4}{\line(1,0){1.55}}
\multiput(8.55,1.75)(0,.2){2}{\line(1,0){1.55}} \put(8.85,1.53){$<$}
\put(0,1.5){$\circ$}
\put(10,1.5){$\bullet$} \put(0,0){$1$} \put(2,0){$2$}
\put(4,0){$\cdots$} \put(7,0){$\cdots$} \put(10,0){$n$}
\end{picture} &
the Lagrangian Grassmannian $\LG_{n,2n}$
\\ \hline %
$D_n, n\geq 4$ & $\begin{array}{c} \setlength{\unitlength}{2.9mm}
%\begin{picture}(11,3.5) \multiput(0,1.6)(2,0){5}{$\circ$}
%\multiput(0.55,2)(2,0){4}{\line(1,0){1.55}}
%\put(8.5,1.95){\line(2,-1){1.55}} \put(8.5,1.95){\line(2,1){1.55}}
%\put(10,2.5){$\circ$} \put(10,0.7){$\circ$} \put(0,1.6){$\bullet$}
%\put(0,0){$1$} \put(2,0){$2$} \put(4,0){$\cdots$}
%\put(7,0){$\cdots$} \put(9.1,0){$n\!-\!1$} \put(11,
%2.3){$n$}\end{picture}
%\\
\setlength{\unitlength}{2.9mm} \begin{picture}(11,3.5)
\multiput(0,1.6)(2,0){5}{$\circ$}
\multiput(0.55,2)(2,0){4}{\line(1,0){1.55}}
\put(8.5,1.95){\line(2,-1){1.55}} \put(8.5,1.95){\line(2,1){1.55}}
\put(10,2.5){$\circ$} \put(10,0.7){$\circ$} \put(0,0){$1$}
\put(2,0){$2$} \put(4,0){$\cdots$} \put(7,0){$\cdots$}
\put(9.1,0){$n\!-\!1$} \put(11, 2.3){$n$} \put(10,2.45){$\circ$}
\put(0,1.6){$\bullet$}
\put(10,0.75){$\circ$}
\end{picture}
\end{array}$ &
the even dimensional quadric $\Q^{2n-2}$

\\ \hline %
$D_n, n\geq 4$ & $\begin{array}{c} \setlength{\unitlength}{2.9mm}
%\begin{picture}(11,3.5) \multiput(0,1.6)(2,0){5}{$\circ$}
%\multiput(0.55,2)(2,0){4}{\line(1,0){1.55}}
%\put(8.5,1.95){\line(2,-1){1.55}} \put(8.5,1.95){\line(2,1){1.55}}
%\put(10,2.5){$\circ$} \put(10,0.7){$\circ$} \put(0,1.6){$\bullet$}
%\put(0,0){$1$} \put(2,0){$2$} \put(4,0){$\cdots$}
%\put(7,0){$\cdots$} \put(9.1,0){$n\!-\!1$} \put(11,
%2.3){$n$}\end{picture}
%\\
\setlength{\unitlength}{2.9mm} \begin{picture}(11,3.5)
\multiput(0,1.6)(2,0){5}{$\circ$}
\multiput(0.55,2)(2,0){4}{\line(1,0){1.55}}
\put(8.5,1.95){\line(2,-1){1.55}} \put(8.5,1.95){\line(2,1){1.55}}
\put(10,2.5){$\circ$} \put(10,0.7){$\circ$} \put(0,0){$1$}
\put(2,0){$2$} \put(4,0){$\cdots$} \put(7,0){$\cdots$}
\put(9.1,0){$n\!-\!1$} \put(11, 2.3){$n$} \put(10,2.45){$\bullet$}
\put(10,0.75){$\bullet$}
\end{picture}
\end{array}$ &
even orthogonal Grassmannian $\OG_{n+1,2n+1}$
\\ \hline
$E_6$ & \setlength{\unitlength}{3mm}
\begin{picture}(9,3.6)
\multiput(0,0.5)(2,0){5}{$\circ$}
\multiput(0.55,0.95)(2,0){4}{\line(1,0){1.6}} \put(0,0.5){$\bullet$} \put(8,0.5){$\bullet$}
\put(8,0.5){$\circ$} \put(4,2.6){$\circ$}
\put(4.35,1.2){\line(0,1){1.5}} \put(0,-.6){$1$} \put(2,-0.6){$3$}
\put(4,-.6){$4$} \put(6,-.6){$5$} \put(5,2.5){$2$} \put(8,-.6){$6$}
\end{picture} &
the real points of the Cayley plane $\OP^2$
\\[1mm] \hline %
$E_7$ & \setlength{\unitlength}{3mm}
\begin{picture}(11,4)
\put(0,0.9){$\circ$} \multiput(2,0.9)(2,0){4}{$\circ$}
\put(10,0.9){$\bullet$}
\multiput(0.55,1.35)(2,0){5}{\line(1,0){1.6}} \put(10,0.9){$\circ$}
\put(4,3){$\circ$} \put(4.35,1.6){\line(0,1){1.5}} \put(0,-.2){$1$}
\put(2,-0.2){$3$} \put(4,-.2){$4$} \put(6,-.2){$5$} \put(5,2.9){$2$}
\put(8,-.2){$6$} \put(10,-.2){$7$}
\end{picture} & the (real) Freudenthal variety $\Fr$\\[1mm] \hline
\end{tabular}
\caption{\label{fig:comin} The (co)minuscule parabolic quotients}
\end{figure}

For more details concerning this classification we refer the reader
to \cite{BL}.

%The $E_7$ case is unnamed in \cite{TY} and called the Freudenthal
%variety in \cite{CMP}.

%\fixmeplease{Define weak and strong order of a parabolic quotient?}

Besides the Bruhat (strong) order, we also have the {\it weak order}
on a parabolic quotient (see \cite{BB} for details).
An element $w \in W$ is {\it fully commutative} if every pair of reduced words for $w$ are related by
a sequence of relations of the form $s_i s_j = s_j s_i$.  The following result is due to Stembridge \cite{Ste} (part of the statement is due to Proctor~\cite{Pro}).

\begin{theorem}
If $(W,j)$ is (co)minuscule then $W^j$ consists of fully commutative
elements.  Furthermore the weak order $(W^j, \prec)$ and strong
order $(W^j, <)$ of $W^j$ coincide, and this partial order is a
distributive lattice.
\end{theorem}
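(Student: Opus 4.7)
The strategy is to analyze $W^j$ through inversion sets. For $w \in W$ set $\Phi(w) = \{\gamma \in \Phi^+ : w(\gamma) \in \Phi^-\}$, and for $\gamma \in \Phi^+$ let $c_j(\gamma)$ denote the coefficient of $\alpha_j$ in the simple-root expansion of $\gamma$. Since $W^j$ together with its weak and strong orders depends only on $W$ as a Coxeter system, and since the minuscule condition for $(\Phi,\alpha_j)$ is equivalent to the cominuscule condition for $(\Phi^\vee,\alpha_j^\vee)$, it suffices to handle the cominuscule case, in which $c_j(\gamma) \in \{0,1\}$ for every $\gamma \in \Phi^+$.

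The key first observation is that for every $w \in W^j$ and every $\gamma \in \Phi(w)$ one has $c_j(\gamma) = 1$. Indeed, $w \in W^j$ means $w(\alpha_i) \in \Phi^+$ for all $i \in I \setminus \{j\}$, and a short induction on height shows that $w$ then sends every positive root with $c_j = 0$ to a positive root; combined with the cominuscule bound $c_j \le 1$, this forces $c_j = 1$ on inversions. Full commutativity is essentially immediate: were some $w \in W^j$ not fully commutative, by Matsumoto's theorem a reduced word for $w$ could be transformed by commutations into one containing a braid substring $s_a s_b s_a \cdots$ of length $m(s_a,s_b) \ge 3$, which by the standard computation with $\beta_k = s_{i_\ell}\cdots s_{i_{k+1}}(\alpha_{i_k})$ forces every positive root of the rank-two subsystem spanned by $\alpha_a,\alpha_b$ to appear in $\Phi(w)$; but since at most one of $a,b$ equals $j$, at least one of the simple roots $\alpha_a,\alpha_b$ then lies in $\Phi(w)$ with $c_j = 0$, contradicting the first observation.

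For the lattice part I would identify $W^j$ with the lattice $J(Q^j)$ of order ideals of the subposet $Q^j := \{\gamma \in \Phi^+ : c_j(\gamma) = 1\}$, ordered by the restriction of the standard root order ($\beta \le \gamma$ iff $\gamma-\beta$ is a non-negative combination of simple roots). The map $w \mapsto \Phi(w)$ lands in $J(Q^j)$: biconvexity of $\Phi(w)$ together with the observation that the sum of two elements of $Q^j$ has $c_j = 2$ and so cannot be a root reduces biconvexity to being downward closed in $Q^j$. For the inverse, given $O \in J(Q^j)$ one builds $w_O$ by adjoining minimal elements of the complement one at a time: the cominuscule hypothesis forces the next root $\gamma$ to satisfy $w_{O \setminus \{\gamma\}}(\gamma) = \alpha_i$ for some simple $\alpha_i$, so that left multiplication by $s_i$ enlarges the inversion set by exactly $\gamma$ and keeps the product inside $W^j$. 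Distributivity of $J(Q^j)$ is then automatic from Birkhoff's theorem.

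The coincidence of weak and strong orders on $W^j$ follows because both translate, under the above bijection, to the operation of adjoining a single minimal element of the complement: strong covers because $\Phi(w) \setminus \Phi(v)$ must be a singleton and lie in $Q^j$, and weak covers because $w_O = s_i w_{O \setminus \{\gamma\}}$ by the inductive construction. The main obstacle is this inductive construction of $w_O$ from $O$: verifying that $w_{O \setminus \{\gamma\}}(\gamma)$ really is simple requires a careful case split ruling out decompositions $w_{O \setminus \{\gamma\}}(\gamma) = \beta + \delta$ in $\Phi^+$, and this is precisely where the cominuscule restriction $c_j \le 1$ (which blocks any root of $c_j$-coefficient $2$ from existing) is used most forcefully.
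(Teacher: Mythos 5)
A preliminary remark: the paper does not prove this theorem at all --- it is quoted as a result of Stembridge \cite{Ste} and Proctor \cite{Pro} --- so there is no in-paper argument to measure yours against. Your inversion-set strategy is essentially the classical route of those authors, and the architecture is sound: the observation that every inversion of $w\in W^j$ has $\alpha_j$-coefficient $1$ is correct, and the identification of $W^j$ with order ideals of $Q^j=\{\gamma\in\Phi^+ : c_j(\gamma)=1\}$ is the right skeleton. However, as written the proposal has two concrete defects. The first is an outright false step in the full-commutativity argument. If a reduced word $s_{i_1}\cdots s_{i_\ell}$ contains the braid $\langle s_a,s_b\rangle_m$ in positions $k+1,\dots,k+m$, the inversions it contributes to $\Phi(w)$ are the roots $u(\delta)$ for $\delta$ ranging over $\Phi^+_{\{a,b\}}$, where $u=s_{i_\ell}\cdots s_{i_{k+m+1}}$ --- not the roots $\delta$ themselves. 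For instance, for $w=s_2s_1s_2s_3$ in $A_3$ the braid $s_2s_1s_2$ contributes $\{\alpha_1,\ \alpha_2+\alpha_3,\ \alpha_1+\alpha_2+\alpha_3\}$, and neither $\alpha_2$ nor $\alpha_1+\alpha_2$ lies in $\Phi(w)$. So your conclusion ``one of $\alpha_a,\alpha_b$ lies in $\Phi(w)$ with $c_j=0$'' does not follow. The repair is available from your own setup: the three inversions $u(\alpha_a)$, $u(\alpha_b)$, $u(\alpha_a+\alpha_b)$ satisfy the additive relation, so if the first two had $c_j=1$ (as your key observation forces) the third would have $c_j=2$, contradicting cominusculity. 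You should argue via this additivity, not via membership of the simple roots themselves.

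The second defect is that the strong-order half is asserted rather than proved. The sentence ``strong covers because $\Phi(w)\setminus\Phi(v)$ must be a singleton'' presupposes $\Phi(v)\subseteq\Phi(w)$, which is precisely what needs proof: for a Bruhat cover $w=tv$ by a non-simple reflection one only knows $|\Phi(w)|=|\Phi(v)|+1$, and containment of inversion sets fails for general Bruhat covers (already $s_1\lessdot s_1s_2$ in $A_2$ has $\Phi(s_1)=\{\alpha_1\}\not\subseteq\{\alpha_2,\alpha_1+\alpha_2\}=\Phi(s_1s_2)$). Showing that \emph{within a cominuscule $W^j$} every Bruhat cover does force containment is the real content of Proctor's theorem and requires an argument --- for example, analyzing the roots moved by the reflection $t$ against your coefficient-one constraint, or passing to the $W$-orbit of the fundamental weight $\omega_j$, where Bruhat order on $W^j$ becomes a weight order. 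Finally, the case analysis you defer (that $w_{O\setminus\{\gamma\}}(\gamma)$ is simple) does go through --- write $w_{O'}(\gamma)=\mu_1+\mu_2$ and split according to the signs of $w_{O'}^{-1}(\mu_i)$, using minimality of $\gamma$ in $Q^j\setminus O'$ in the all-positive case and the coefficient-one constraint otherwise --- but it, together with the two points above, must actually be carried out before this counts as a proof.
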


Since $(W^j,\prec)$ and $(W^j,<)$ coincide, we will just refer to
this partial order as $W^j$.  We indicate in Figure~\ref{fig:posets}
(mostly taken from \cite{LS}) the posets $Q^j$ such that $W^j =
J(Q^j)$, where $J(P)$ denotes the distributive lattice of order
ideals in $P$.  Note that the posets are drawn in ``French" notation so that
minimal elements are at the bottom left.
The diagrams should be interpreted as follows: each
box represents an element of the poset $Q^j$, and if $b_1$ and $b_2$
are two adjacent boxes such that $b_1$ is immediately to the left or
immediately below $b_2$, we have a cover relation $b_1 \lessdot b_2$
in $Q^j$. The partial order on $Q^j$ is the transitive closure of
$\lessdot$. (In particular the labeling of boxes shown in
Figure~\ref{fig:posets} does not affect the poset structure.)

We now state some facts about $Q^j$ which can be found in
\cite{Ste}.  Let $w_0^j \in W^j$ denote the longest element in
$W^j$.  The simple generators $s_i$ used in a reduced expression
for $w_0^j$ can be used to label  $Q^j$ in a way which
reflects the bijection between the minimal length coset
representatives $w \in W^j$ and (lower) order ideals $O_w \subset
Q^j$.  Such a labeling is shown in Figure~\ref{fig:posets};
the label $i$ stands for the simple reflection $s_i$. If $b \in O_w$
is a box labelled by $i$, we denote the simple generator labeling
$b$ by $s_b:= s_i$; the corresponding index $i \in I$ is the {\it
simple label} of $b$.

\begin{center}
\begin{figure}
\begin{tabular}{||c|c||}\hline
Parabolic quotient & $Q^j$ \\
\hline
$W = A_{n-1}$& $\tableaux{\\1&2&3&4&5 \\
2&3&4&5&6 \\
3&4&5&6&7\\&} $  ($n = 8$ and $j = 3$)
\\ \hline
$W = B_n$ and $j = 1$ & $\tableaux{\\1&2&3&4&3&2&1\\&}$ ($n = 4$)
\\ \hline
$W = B_n$ and $j = n$& $\tableaux{\\1&2&3&4\\2&3&4\\3&4\\4\\&}$  ($n
= 4$)
\\ \hline
$W = D_n$ and $j = 1$ & $\tableaux{\\ & & 5&3&2&1\\1&2&3&4\\&}$ ($n
= 5$)
\\ \hline
$W = D_n$ and $j = n$& $\tableaux{\\1&2&3&4\\2&3&5\\3&4\\5\\&}$  ($n
= 5$)
\\ \hline
$W = E_6$ and $j = 1$ & $\tableaux{\\&&&1&3&4&5&6\\
&&&3&4&2\\
&&2&4&5\\
1&3&4&5&6\\& }$
\\ \hline
$W = E_7$ and $j = 1$ &$ \tableaux{\\&&&&&&&&7\\&&&&&&&&6\\&&&&&&&&5\\&&&&&&&2&4\\&&&&7&6&5&4&3\\&&&&6&5&4&3&1\\&&&&5&4&2&\\&&&2&4&3\\7&6&5&4&3&1\\&}$ \\
\hline
\end{tabular}
\caption{Underlying posets of parabolic quotients}
\label{fig:posets}
\end{figure}
\end{center}

Given this labeling,
if $O_w$ is an order ideal in $Q^j$,
the set of linear extensions
$\{e: O_w \to [1,\ell(w)]\}$ of $O_w$ are in bijection with the
reduced words $R(w)$ of $w$: the reduced word (written down from right to
left) is obtained by reading the labels of $O_w$ in the order
specified by $e$.  We will call the linear extensions of $O_w$ {\it
reading orders}. (Alternatively, one may think of a linear extension
of $O_w$ as a standard tableau with shape $O_w$.)

\begin{remark} We use the following conventions: we do not distinguish
between the root systems $B_n$ and $C_n$ since we are only
interested in the posets $W^j$ (thus we refer to the Weyl group of
both root systems as $B_n$); for $W = D_n$ we always pick $j = 1$ or
$n$ since the case $j = n-1$ is essentially the same as the case $j
= n$; similarly for $W = E_6$ we always pick $j = 1$.
\end{remark}
%\fixmeplease{Definitions of distributive lattices?  Nah ... let's just wait
% and see if the referee wants it.}

\begin{remark}
In the literature, the two cases {\it minuscule} and {\it
cominuscule} are usually distinguished.  This distinction will not
be important for our applications.
\end{remark}

\section{\protect\Le-diagrams, \protect\Le-moves and the \protect\Le-game}\label{s:Le}
\subsection{Positive distinguished subexpressions}

In this subsection we give background on distinguished and positive
distinguished subexpressions; for more details, see
\cite{Deo:Decomp} and \cite{MR}. Consider a reduced expression in $W$, say $s_3
s_2 s_1 s_3 s_2 s_3$ in type $A_3$.  We define a {\it subexpression}
to be a word obtained from a reduced expression by replacing some of
the factors with $1$. For example, $s_3 s_2\, 1\, s_3 s_2\, 1$ is a
subexpression of $s_3 s_2 s_1 s_3 s_2 s_3$.
Given a reduced expression $\w:=s_{i_1} s_{i_2} \dots s_{i_n}$ for $w$,
we set $w_{(k)}:= s_{i_1} s_{i_2} \dots s_{i_k}$ if $k \geq 1$
and $w_{(0)} = 1$.  The following definition was given in \cite{MR}
and was implicit in \cite{Deo:Decomp}.

%\begin{definition}\label{d:Js}
%Given a reduced expression $\w=
%s_{i_1} s_{i_2} \dots s_{i_n}$ for $w$, we define
%\begin{align*}
%J^+_\w &=\{k\in\{1,\dotsc,n\}\ |\  w_{(k-1)}<w_{(k)}\},\\
%J^{\circ}_\w\, &=\{k\in\{1,\dotsc,n\}\ |\  w_{(k-1)}=w_{(k)}\},\\
%J^-_\w &=\{k\in\{1,\dotsc,n\}\ |\  w_{(k)}<w_{(k-1)}\}.
%\end{align*}
%The expression  $\w$
%is called {\it
%non-decreasing} if $w_{(j-1)}\le w_{(j)}$ for all $j=1,\dotsc, n$,
%e.g.\ $J^-_\w=\emptyset$, and it is called {\it reduced} if $w_{(j-1)}<
%w_{(j)}$ for all $j=1,\dotsc, n$.
%\end{definition}
%
%The following definition is from {\cite[Definition
%2.3]{Deo:Decomp}}.
%\begin{definition}[Distinguished subexpressions]
%Let $\w$ be a reduced expression for $w\in W$ with factors
%$(s_{i_1},\dotsc, s_{i_n})$. Let $v\le w$.
%A subexpression $\v$ of $\w$ is called {\it distinguished}
%if we have
%\begin{equation}\label{e:dist}
%v_{(j)}\le v_{(j-1)}\ s_{i_j}\quad \text{for all
%$j\in\{1,\dotsc,n\}$}.
%\end{equation}
%In other words, if right multiplication by $s_{i_j}$ decreases the
%length of $v_{(j-1)}$, then in a distinguished subexpression we
%must have
%$v_{(j)}=v_{(j-1)}s_{i_j}$.
%
%We write $\v\prec\w$ if $\v$ is a distinguished subexpression of
%$\w$.
%\end{definition}

\begin{definition}[Positive distinguished subexpressions]
Let $\w:= s_{i_1},\dots, s_{i_n}$ be a reduced expression. We call a
subexpression $\v$ of $\w$ {\it positive distinguished} if
 \begin{equation}\label{e:PositiveSubexpression}
v_{(j-1)}< v_{(j-1)}s_{i_j}
 \end{equation}
for all $j=1,\dotsc,n$.
\end{definition}

Note that \eqref{e:PositiveSubexpression} is equivalent to
$v_{(j-1)}\le v_{(j)}\le v_{(j-1)}s_{i_j}$.  We will refer to a
positive distinguished subexpression as a PDS for short.

\begin{lemma}\label{l:positive}\cite{MR}
Given $v\le w$ in $W$ and a reduced expression $\w$ for $w$,
there is a unique PDS $\v_+$ for $v$ in $\w$.
\end{lemma}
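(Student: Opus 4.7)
The plan is to prove existence and uniqueness of $\v_+$ simultaneously by induction on $n=\ell(w)$, peeling the rightmost letter off the reduced expression $\w$. The base case $n=0$ forces $v=w=e$, and the empty subexpression is the unique PDS. For the inductive step, write $\w = \w' \cdot s$ with $s := s_{i_n}$, so that $\w' = s_{i_1}\cdots s_{i_{n-1}}$ is a reduced expression for $w' := ws$ with $\ell(w')=n-1$ (in particular $s\in D_R(w)$). Any subexpression $\v$ of $\w$ restricts to a subexpression $\v|_{\w'}$ with partial products $v_{(0)},\dots,v_{(n-1)}$, and the first $n-1$ instances of \eqref{e:PositiveSubexpression} for $\v$ in $\w$ are precisely the PDS conditions for $\v|_{\w'}$ in $\w'$. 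So the question reduces to pinning down the last factor of $\v$ and then invoking the inductive hypothesis on $\w'$.

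I would then split into two cases according to whether $vs<v$ or $vs>v$. If $vs<v$, choosing the last factor of $\v$ to be $1$ gives $v_{(n-1)}=v$ and demands the inequality $v\le vs$ at step $n$, which fails; hence the last factor must be $s$, giving $v_{(n-1)}=vs$, and the PDS inequality $vs\le v$ at step $n$ holds trivially. The lifting property of the Bruhat order (\cite[Prop.~2.2.7]{BB}), in the form ``$v\le w$ and $s\in D_R(v)\cap D_R(w)$ imply $vs\le ws$,'' then yields $vs\le w'$; by induction there is a unique PDS for $vs$ in $\w'$, and appending $s$ produces the unique PDS for $v$ in $\w$. The case $vs>v$ is analogous: the last factor must be $1$ (otherwise $v_{(n-1)}=vs>v=v_{(n-1)}s$ violates \eqref{e:PositiveSubexpression}), so $v_{(n-1)}=v$; the other half of the lifting property, ``$v\le w$ and $s\in D_R(w)\setminus D_R(v)$ imply $v\le ws$,'' then allows the inductive hypothesis to be applied on $\w'$, and extending by $1$ gives the unique PDS.

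The only nontrivial ingredient is the lifting property of the Bruhat order, which is standard. The main thing to verify is the forced choice of the last factor in each case, which is the short computation just above; once that is in hand, existence and uniqueness both reduce cleanly to the inductive hypothesis. There is no substantial obstacle beyond bookkeeping the two cases and applying the correct specialization of the lifting property.
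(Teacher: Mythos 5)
Your proof is correct: the forced choice of the last factor in each case is right, and the two specializations of the lifting property you invoke are exactly what is needed to push existence and uniqueness down to $\w'$. The paper itself gives no proof, citing \cite{MR}, and the argument there (recalled in the proof of Proposition \ref{prop:greedyLe} as the greedy right-to-left construction of $\v_+$) is essentially the same right-to-left induction you carry out.
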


%The proof of the lemma given in \cite{MR} in fact shows that
%the positive subexpression $\v_+$ can be obtained by choosing
%factors greedily from the right.

\subsection{$\oplus$-diagrams and \protect\Le-diagrams}

The goal of this section is to identify the PDS's
with certain fillings of the boxes of order ideals of $Q^j$.

Let $O_w$ be an order ideal of $Q^j$, where $w \in W^j$.

\begin{definition}
An $\oplus$-diagram (``o-plus diagram") of shape $O_w$ is a filling of the boxes of $O_w$
with the symbols $0$ and $+$.
\end{definition}
Clearly there are $2^{\ell(w)}$ $\oplus$-diagrams of shape $O_w$.
The value of an $\oplus$-diagram $D$ at a box $x$ is denoted $D(x)$.
Let $e$ be a reading order for $O_w$; this gives rise to a reduced
expression $\w = \w_e$ for $w$.  The $\oplus$-diagrams $D$ of shape
$O_w$ are in bijection with subexpressions $\v(D)$ of $\w$: we will
make the seemingly unnatural specification that if a box $b \in O_w$
is filled with a $0$ then the corresponding simple generator $s_b$
is present in the subexpression, while if $b$ is filled with a $+$
then we omit the corresponding simple generator.  The subexpression
$\v(D)$ in turn defines a Weyl group element $v:= v(D) \in W$.

\begin{example}
Consider the order ideal $O_w$ which is $Q^j$ itself for type
$A_{4}$ with $j=2$.  Then $Q^j$ is the following poset
\begin{equation*}
\tableau[sbY]{1 &2 &3   \\ 2 &3 &4 }.
\end{equation*}

Let us choose the reading order (linear extension) indicated by the
labeling below:
\begin{equation*}
\tableau[sbY]{4 &5 &6  \\ 1 &2 &3 }.
\end{equation*}

Then the $\oplus$-diagrams
\begin{equation*}
\tableau[sbY]{0 &0 &0  \\ 0 &0 &0 }\ \ \ \tableau[sbY]{0 &+ &0  \\ 0
&0 &+ } \ \ \ \tableau[sbY]{0 &0 &0  \\ + &0 &+ }
\end{equation*}
correspond to the expressions $s_3 s_2 s_1 s_4 s_3 s_2$, $s_3 1 s_1
1 s_3 s_2$ and $s_3 s_2 s_1 1 s_3 1$.  The first and the last are
PDS's while the second one is not, since it is not reduced.
\end{example}

We next show that $v(D)$ does not depend on the linear extension
$e$.  The following statement can be obtained by inspection.

\begin{lemma}\label{lem:easycommute}
If $b, b' \in O_w$ are two incomparable boxes,  $s_b$ and
$s_{b'}$ commute.
\end{lemma}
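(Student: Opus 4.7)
The plan is to avoid the case-by-case inspection of Figure \ref{fig:posets} suggested by the authors' hint and instead give a uniform argument based on the bijection between linear extensions of $O_w$ and the set $R(w)$ of reduced words for $w$ that was recalled just above in the text.

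The key combinatorial input is a standard order-theoretic fact: since $b, b' \in O_w$ are incomparable, there is a linear extension $e$ of $O_w$ that places $b$ and $b'$ in two consecutive positions, say $e(b) = k$ and $e(b') = k+1$. Such an $e$ can be built explicitly by first linearly extending the lower set $\{x \in O_w : x \leq b \text{ or } x \leq b'\} \setminus \{b, b'\}$, appending $b$ then $b'$, and finally extending to the remaining elements. Crucially, because $b \not\leq b'$ and $b' \not\leq b$, interchanging the values of $e$ at $b$ and $b'$ yields a second valid linear extension $e'$.

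Under the stated bijection, $e$ and $e'$ correspond to two reduced expressions for $w$ that agree in every position except in two adjacent slots, which carry the subwords $s_b s_{b'}$ and $s_{b'} s_b$ respectively. Since both expressions evaluate to the same element $w \in W$, left-cancelling the common prefix and right-cancelling the common suffix in $W$ yields the relation $s_b s_{b'} = s_{b'} s_b$, as claimed.

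The only point requiring any care is the construction of a linear extension making $b$ and $b'$ consecutive, but this is a routine fact about finite posets. Notably, the argument sidesteps Stembridge's fully-commutative theorem entirely: it uses only the bijection between linear extensions of $O_w$ and $R(w)$, and hence would apply to any parabolic quotient whose Bruhat order arises as the lattice of order ideals of a poset labeled by simple generators in this manner.
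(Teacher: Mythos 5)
Your argument is correct, but it is a genuinely different route from the paper's. The paper disposes of this lemma ``by inspection'': one looks at the explicit labelings of $Q^j$ in Figure~\ref{fig:posets} and checks, case by case for each cominuscule family, that two incomparable boxes always carry simple labels corresponding to non-adjacent nodes of the Dynkin diagram (or, trivially, to commuting generators), so that $s_b$ and $s_{b'}$ commute for elementary reasons. You instead derive commutativity uniformly: build a linear extension in which $b$ and $b'$ occupy consecutive positions (your construction via the lower set generated by $b$ and $b'$ is sound, since $b$ and $b'$ are both maximal in that lower set and no element outside it lies below any element of it), observe that swapping them gives a second linear extension, read off two reduced words $u\,s_b s_{b'}\,v$ and $u\,s_{b'} s_b\,v$ for the same $w$, and cancel $u$ and $v$ in the group. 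This is clean and requires no case analysis, and it does buy generality: it works for any poset whose linear extensions biject with $R(w)$ in the stated way. The one caveat is your closing claim to have ``sidestepped Stembridge's fully-commutative theorem entirely.'' The bijection between linear extensions of $O_w$ and the full set $R(w)$ of reduced words is precisely the heap-theoretic content of full commutativity (a general $w$ has reduced words in several commutation classes, and only one class is captured by linear extensions of a heap); so your proof trades the paper's elementary figure-checking for a dependence on the strongest fact quoted from \cite{Ste}. Within the logical structure of the paper this is not circular, since that bijection is stated before the lemma and is not derived from it, but it does make the lemma a corollary of heavier machinery rather than the low-level observation the authors intended.
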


Lemma \ref{lem:easycommute} implies the following statement.

\begin{proposition}\label{prop:independence}
Let $D$ be an $\oplus$-diagram.  Then
\begin{enumerate}
\item
the element $v:=v(D)$ is independent of the choice of reading
word $e$.
\item
whether $\v(D)$ is a PDS depends only on $D$ (and not $e$).
\end{enumerate}
\end{proposition}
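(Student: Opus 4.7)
The plan is to reduce both parts to a single elementary swap in the reading order and then invoke Lemma~\ref{lem:easycommute} together with the elementary observation that commuting simple reflections $s_b, s_{b'}$ satisfy $s_{b'}(\alpha_b)=\alpha_b$ (since $\langle \alpha_b, \alpha_{b'}^\vee\rangle = 0$). First I would appeal to the standard fact that any two linear extensions of a finite poset are connected by a sequence of adjacent transpositions, i.e.\ swaps of two elements that are adjacent in the linear order and incomparable in the poset. It therefore suffices to compare two reading orders $e, e'$ that differ by swapping positions $j, j+1$, corresponding to incomparable boxes $b, b' \in O_w$.

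For (1), by Lemma~\ref{lem:easycommute} the simple generators $s_b$ and $s_{b'}$ commute. In $\v(D)$ the factors at positions $j$ and $j+1$ are $s_b$ or $1$ (resp.\ $s_{b'}$ or $1$), depending on $D(b)$ and $D(b')$. In each of the four possibilities the unordered pair of factors is the same, and since the two non-trivial factors commute the product of the two consecutive entries is independent of the order. Multiplying through, $v(D) = v_{(\ell(w))}$ is unchanged.

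For (2), the PDS property at step $k$ reads: $s_{i_k}$ is a right ascent of $v_{(k-1)}$. For $k \notin \{j, j+1\}$ the partial product $v_{(k-1)}$ and the letter $s_{i_k}$ are literally the same for $e$ and $e'$ (using part~(1) to match the partial products up to index $j-1$, and noting that $v_{(j+1)}$ depends only on the unordered contribution from the two swapped positions). Thus only the conditions at $k = j, j+1$ must be checked, and I would do a case analysis on $(D(b), D(b')) \in \{0,+\}^2$. The key input in each case is: if $s_b, s_{b'}$ are commuting simple reflections and $u \in W$, then $u < us_b \iff us_{b'} < us_{b'}s_b$, which follows immediately from $s_{b'}(\alpha_b) = \alpha_b$. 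When both symbols are $0$, this equivalence shows that $s_b$ being a right ascent of $v_{(j-1)}$ together with $s_{b'}$ being a right ascent of $v_{(j-1)}s_b$ is equivalent to the analogous statement after the swap; when exactly one symbol is $+$, the equivalence directly matches the two PDS conditions; when both are $+$, the two required conditions are literally interchanged.

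The main obstacle is purely bookkeeping in the four-case analysis for part~(2); the only genuinely nontrivial ingredient beyond Lemma~\ref{lem:easycommute} is the stability of ascents under multiplication by a commuting simple reflection, which is a one-line computation with inversions.
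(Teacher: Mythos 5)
Your proposal is correct and follows essentially the same route as the paper: reduce to an adjacent transposition of two incomparable boxes, use Lemma~\ref{lem:easycommute} for part~(1), and check the four cases for part~(2). The paper leaves the four-case verification as "clear from the definition," whereas you supply the one fact that makes it go through (stability of ascents under right multiplication by a commuting simple reflection, via $s_{b'}(\alpha_b)=\alpha_b$); this is a welcome elaboration rather than a different argument.
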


\begin{proof}
For part (1), note that two linear extensions of the same poset
(viewed as permutations of the elements of
the poset) can
be connected via transpositions of pairs of incomparable elements.
By Lemma \ref{lem:easycommute}, $v(D)$ is therefore independent
of the choice of reading word.

Suppose $D$ is an $\oplus$-diagram of shape $O_w$, and consider the
reduced expression $\w:=\w_e = s_{i_1} \dots s_{i_n}$ corresponding
to a linear extension $e$. Suppose $\v(D)$ is a PDS of $\w$.
For part (2), it suffices to show that if we swap the $k$-th and
\mbox{$(k+1)$-st} letters of both $\w$ and $\v(D)$, where these
positions correspond to incomparable boxes in $O_w$, then the
resulting subexpression $\v'$ will be a PDS of the resulting reduced
expression $\w'$.  If we examine the four cases (based on whether
the $k$-th and $(k+1)$-st letters of $\v(D)$ are $1$ or $s_{i_k}$)
it is clear from the definition  that $\v'$ is a PDS.
\end{proof}

Proposition \ref{prop:independence} allows us to make the following
definition.

\begin{definition}
A $\Le$-diagram of shape $O_w$ is an $\oplus$-diagram $D$ of shape
$O_w$ such that $\v(D)$ is a PDS.
\end{definition}

The following statement follows immediately from Lemma
\ref{l:positive} and Theorem \ref{thm:cell}.

\begin{prop}\label{prop:cells}
The cells of $(G/P_j)_{\geq 0}$ defined in Theorem \ref{thm:cell} are in
bijection with pairs $(D,O_w)$ where $O_w$ is an order ideal in
$Q^j$ and $D$ is a $\Le$-diagram of shape $O_w$. Furthermore, the
cell labeled by $(D,O_w)$ is isomorphic to $(\R^+)^s$ where $s$ is
the number of $+$'s in $D$.
\end{prop}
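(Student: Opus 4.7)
The plan is to assemble the bijection by chaining three pieces: Rietsch's cell labels from Theorem~\ref{thm:cell}, the order-ideal description of $W^j$ given by Stembridge/Proctor, and the uniqueness of PDS's from Lemma~\ref{l:positive}. More precisely, Theorem~\ref{thm:cell} says that the cells of $(G/P_j)_{\geq 0}$ are in bijection with pairs $(x,w) \in \I^j$, i.e.\ with pairs consisting of $w \in W^j$ and $x \in W$ with $x \le w$. Since $(W,j)$ is (co)minuscule, the map $w \mapsto O_w$ identifies $W^j$ with the set of order ideals in $Q^j$, so the first coordinate $w$ is encoded by the shape $O_w$. It therefore suffices, for each fixed $w \in W^j$, to produce a bijection between $\{x \in W : x \le w\}$ and the set of $\Le$-diagrams of shape $O_w$.

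To do this I would fix any reading order $e$ of $O_w$ and let $\w_e = s_{i_1}\cdots s_{i_n}$ be the associated reduced expression. By the construction preceding Lemma~\ref{lem:easycommute}, $\oplus$-diagrams of shape $O_w$ are in explicit bijection with subexpressions of $\w_e$ (boxes with $0$ encode a retained simple reflection, boxes with $+$ encode a $1$). Under this bijection, the $\Le$-diagrams correspond exactly to the PDS's of $\w_e$, and Proposition~\ref{prop:independence} tells us this notion is independent of the choice of $e$. Now Lemma~\ref{l:positive} provides, for each $x \le w$, a unique PDS $\v_+$ of $\w_e$ with product $x$, and conversely every PDS determines an element $x \le w$. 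This is precisely the bijection $\{x \le w\} \longleftrightarrow \{\Le\text{-diagrams of shape } O_w\}$ we need.

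For the parametrization statement, I would invoke the Marsh--Rietsch parametrization of $R_{x,w;>0}$ \cite{MR}: given the PDS $\v_+$ of $\w_e$ for $x$, the map sending $(t_k)_{k \in J^+} \in (\R^+)^s$ to the image in $G/B^+$ of the product obtained by inserting $y_{i_k}(t_k)$ at each position $k$ where $\v_+$ has a $1$ (and the appropriate $\dot{s}_{i_k}$ at the other positions) is a homeomorphism $(\R^+)^s \xrightarrow{\sim} R_{x,w;>0}$, where $s = \ell(w)-\ell(x)$ equals the number of $+$'s in $D$. Projecting via $\pi^J$ gives $P^J_{x,w;>0} \cong (\R^+)^s$, which matches the dimension count in Theorem~\ref{thm:cell}.

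The only genuinely new work in this proposition is the translation between PDS's and $\Le$-diagrams; everything else is a bookkeeping assembly of cited results. The main conceptual point to verify carefully is the independence statement (already handled in Proposition~\ref{prop:independence}): without it, the notion of $\Le$-diagram would depend on a choice of reading order, and the bijection with pairs $(x,w)$ would be merely a bijection with pairs $(x, w, e)$. Once independence is in hand, the proof reduces to reading off the correspondence box-by-box from the expression $\w_e$.
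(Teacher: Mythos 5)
Your proposal is correct and follows essentially the same route as the paper, which simply observes that the proposition is immediate from Lemma~\ref{l:positive} (uniqueness of the PDS for each $x\le w$) together with Theorem~\ref{thm:cell}; you have merely spelled out the chain of identifications (cells $\leftrightarrow$ pairs $(x,w)$ $\leftrightarrow$ order ideal plus PDS $\leftrightarrow$ $\Le$-diagram) and the count that the number of $+$'s equals $\ell(w)-\ell(x)$. The only cosmetic difference is your explicit appeal to the Marsh--Rietsch parametrization for the $(\R^+)^s$ statement, which the paper subsumes into the assertion of Theorem~\ref{thm:cell} that the sets are semi-algebraic cells of that dimension.
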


Let us now state one of the main aims of this work.

\begin{problem}
Give a compact description of $\Le$-diagrams.
\end{problem}

\subsection{The \protect\Le-game}
Let $D$ be an $\oplus$-diagram of shape $O_w$ corresponding to an
element $v(D) \in W$.  By Lemma \ref{l:positive} and
Proposition~\ref{prop:independence} there is a unique $\Le$-diagram
$D_+$ with $v(D_+) = v(D)$.  We call $D_+$ the $\Le$-ification of
$D$.

\begin{problem}\label{prob:moves}
Describe how to produce $D_+$ from $D$.
\end{problem}

Our solution to Problem \ref{prob:moves} will be algorithmic,
involving a series of game-like moves.  Suppose $C \subset O_w$ is a
convex subset: that is, if $x$ and $y$ are in $C$ then any $z$ such
that $x<z<y$ must also be in $C$. We may extend the definition of
$\oplus$-diagrams to $C$.  In addition Proposition
\ref{prop:independence} still holds for $\oplus$-diagrams of shape
$C$.  If $D$ is an $\oplus$-diagram of shape $C$ we again denote by
$v(D) \in W$ the corresponding Weyl group element.  If $S: C \to
\{0,+,?\}$ is a filling of $C$ with the symbols $0$, $+$ and $?$, we
say that an $\oplus$-diagram $D$ is compatible with $S$ if for every
$x \in C$
\begin{enumerate}
\item
$D(x) = 0 \implies S(x) \in \{0, ?\}$, and
\item
$D(x) = + \implies S(x) \in \{+, ?\}$.
\end{enumerate}

If $x,y \in O_w$ are two boxes we let $(x,y) = \{z \in O_w \mid x <
z < y\}$ be the open interval between $x$ and $y$. Similarly, define
the half open intervals $(x,y]$ and $[x,y)$.

\begin{definition}
A $\Le$-move $M$ is a triple $(x,y,S)$ consisting of a pair $x <y
\in O_w$ of comparable, distinct boxes together with a filling of
the open interval $S:
(x,y) \to \{0,+,?\}$ such that
\begin{equation}\label{eq:Le}
v(D \cup x) = v(D \cup y)
\end{equation}
for every $\oplus$-diagram $D$ of shape $(x,y)$ compatible with $S$.
Here $D \cup x$ ($D \cup y$) is the $\oplus$-diagram of shape
$[x,y)$ ($(x,y])$ obtained from $D$ by placing a $0$ in $x$ ($y$).
We say that $(x,y,S)$ is a $\Le$-move from $y$ to $x$ via $S$.

Now if $D$ is an $\oplus$-diagram whose shape contains $[x,y]$, we
say that a $\Le$-move $M = (x,y,S)$ {\it can be performed} on $D$ if $D(y)
= 0$ and $D|_{(x,y)}$ is compatible with $S$.  The result of $M$ on
$D$ is then the $\oplus$-diagram $D'$ obtained from $D$ by setting
$D(y) = +$ and switching the entry of $D(x)$ (that is, $D'(x) = 0$
if $D(x) = +$ and $D'(x)= +$ if $D(x) = 0$).

\end{definition}

\begin{remark}\label{rem:Le}
%Recall from Section \ref{sec:lusztig} the correspondence between
%simple reflections $s_i$ and simple roots $\alpha_i$.
Let the simple generator corresponding to the box $x$ (resp. $y$) be
the simple root $\alpha$ (resp. $\beta$).  Then (\ref{eq:Le}) is
equivalent to $v(D) s_\alpha = s_\beta v(D)$ which in turn is
equivalent to
\begin{equation}\label{eq:LeCheck}
v(D)^{-1} \cdot \beta = \alpha. \end{equation}
\end{remark}

For two $\{0,+,?\}$-fillings $S,S'$ of the same shape let us say
that $S'$ is a specialization of $S$ (and $S$ a generalization of
$S'$) if $S'$ is obtained from $S$ by changing some $?$'s to $0$'s
or $+$'s. It is then clear from the definition that if $S'$ is a
specialization of $S$ and $(x,y,S)$ is a $\Le$-move then so is
$(x,y,S')$.

The following lemma is
immediate from the definitions.

\begin{lemma}
If $D'$ is obtained from $D$ by a sequence of $\Le$-moves,
$v(D') = v(D)$.
\end{lemma}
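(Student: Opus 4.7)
The plan is to reduce to the case of a single $\Le$-move by induction, then exploit the freedom to choose a reading order (via Proposition~\ref{prop:independence}) to isolate the interval $[x,y]$ in the word for $v(D)$.

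Assume $D'$ is obtained from $D$ by a single $\Le$-move $M=(x,y,S)$. The first step is to produce a linear extension $e$ of $O_w$ in which the boxes of $[x,y]$ appear as a contiguous block, with $x$ first and $y$ last in that block. To see that such an $e$ exists, I would verify that any $z\in O_w\setminus[x,y]$ satisfies exactly one of: (i) $z<b$ for every $b\in[x,y]$ with which $z$ is comparable, (ii) $z>b$ for every such $b$, or (iii) $z$ is incomparable to all of $[x,y]$. (If both (i) and (ii) held, then $b'\le z\le b$ for some $b',b\in[x,y]$, forcing $z\in[b',b]\subseteq[x,y]$.) Then one may read the type-(i) elements first (in any valid order), then $[x,y]$ with $x$ first and $y$ last, and finally the type-(ii) and (iii) elements; the previous observation guarantees this is a linear extension.

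With this reading order, the subexpression $v(D)$ factors as $w_L\cdot u(D)\cdot w_R$, where $w_L,w_R$ collect contributions from outside $[x,y]$ and $u(D)$ collects the contribution from $[x,y]$. Since $D$ and $D'$ agree outside $[x,y]$, we have identical outer factors; thus it suffices to show $u(D)=u(D')$. Writing $w_0$ for the contribution from $(x,y)$ (identical for $D$ and $D'$), and using $D(y)=0$, $D'(y)=+$, $D'(x)=\neg D(x)$:
\begin{itemize}
\item if $D(x)=0$, then $u(D)=s_x\cdot w_0\cdot s_y$ and $u(D')=w_0$;
\item if $D(x)=+$, then $u(D)=w_0\cdot s_y$ and $u(D')=s_x\cdot w_0$.
\end{itemize}
In both cases, $u(D)=u(D')$ is equivalent to the single identity $s_x\cdot w_0=w_0\cdot s_y$.

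Finally, this identity is exactly the $\Le$-move condition \eqref{eq:Le} applied to $D_0:=D|_{(x,y)}$: using the same reading order on $(x,y)$, one computes $v(D_0\cup x)=s_x\cdot w_0$ and $v(D_0\cup y)=w_0\cdot s_y$, and by hypothesis $D_0$ is compatible with $S$, so $v(D_0\cup x)=v(D_0\cup y)$. This finishes the single-move case; iterating completes the proof. The only nontrivial step is the combinatorial lemma that $[x,y]$ can be read as a contiguous block, and this reduces immediately to the convexity argument above.
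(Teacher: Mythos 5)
Your proof is correct. The paper states this lemma without proof, calling it ``immediate from the definitions,'' and your argument --- using Proposition~\ref{prop:independence} to choose a reading order in which $[x,y]$ forms a contiguous block with $x$ first and $y$ last, factoring the subexpression accordingly, and then invoking the defining identity $v(D_0\cup x)=v(D_0\cup y)$ of a $\Le$-move on the middle factor --- is precisely the intended unpacking of that remark.
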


Performing a $\Le$-move on an $\oplus$-diagram $D$ either reduces
the number of $0$'s or moves a $0$ to a box which is smaller in
the partial order (and the $+$ to
a bigger box).  Thus any sequence of $\Le$-moves must eventually
terminate.

\begin{proposition}\label{prop:greedyLe}
No $\Le$-moves can be performed on a $\Le$-diagram.  Every
$\oplus$-diagram $D$ can be $\Le$-ified by a finite sequence of
$\Le$-moves.
\end{proposition}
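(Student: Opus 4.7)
For the first assertion I would argue by contradiction. Assume $D$ is a $\Le$-diagram and that some $\Le$-move $(x,y,S)$ can be performed on $D$, so $D(y) = 0$ and, writing $u := v(D|_{(x,y)})$, Remark~\ref{rem:Le} gives the commutation $u s_x = s_y u$. Because $[x,y]$ is a convex subset of $O_w$, I would choose a linear extension of $O_w$ in which the boxes of $[x,y]$ are read contiguously, with $x$ first and $y$ last within this block; let $\w$ be the resulting reduced word, let $p_y < p_x$ be the positions of $s_y$ and $s_x$ in $\w$, put $A := v_{(p_y - 1)}(D)$, and let $m$ be the number of $0$-cells of $D$ in $(x,y)$. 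Since $D$ is $\Le$, at each position inclusion increases the length of $v_{(\cdot)}(D)$ by one while omission preserves it, so iterating the PDS inequality at position $p_y$, at each interior position, and at position $p_x$ yields successively
\[
\ell(A s_y) = \ell(A)+1,\qquad \ell(A s_y u) = \ell(A)+1+m,\qquad \ell(A s_y u s_x) = \ell(A)+m+2.
\]
Using $u s_x = s_y u$ one has $A s_y u s_x = A u$, so $\ell(Au) = \ell(A) + m + 2$. But $u$ is a product of $m$ simple reflections, so $\ell(u) \le m$, and subadditivity of length forces $\ell(Au) \le \ell(A) + m$, a contradiction.

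For the second assertion, termination of any sequence of $\Le$-moves is immediate from the observation (made just before the Proposition) that each move either decreases the number of $0$-cells or shifts a $0$-cell strictly downwards in the poset. Hence it suffices to prove that any non-$\Le$ diagram $D$ admits at least one performable $\Le$-move. The plan is to locate a failure of the PDS condition and apply the strong exchange condition in $W$ to produce the required pair $(x',y')$. Fix any linear extension and let $k$ be the smallest position of $\w$ at which $\v(D)$ violates PDS; by minimality, the included positions $j_1 < \cdots < j_r$ in $[1,k-1]$ give a reduced expression $v_{(k-1)}(D) = a_1 \cdots a_r$. Since $s_{i_k}$ is a right descent of $v_{(k-1)}(D)$, strong exchange produces an index $m$ for which $v_{(k-1)}(D) s_{i_k}$ is obtained from $a_1 \cdots a_r$ by deleting $a_m$, and the companion element $u := a_{m+1} \cdots a_r$ satisfies $u(\alpha_{i_k}) = \alpha_{a_m}$. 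Setting $x' := b_k$ and $y' := b_{j_m}$, this is precisely the Le-relation of Remark~\ref{rem:Le} for the pair $(x',y')$, and $D(y') = 0$ by construction.

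The main technical obstacle is that strong exchange only delivers $e(x') < e(y')$ in the chosen linear extension, which is weaker than $x' < y'$ in the poset $O_w$; moreover, even when the pair is poset-comparable, the included letters composing $u$ sit at positions of $\w$ that may correspond to boxes strictly between $x'$ and $y'$ in reading order but not in the poset (namely, boxes incomparable to $x'$ or $y'$ inserted between them). Both issues should be resolved by exploiting Lemma~\ref{lem:easycommute} together with Theorem~3.2 (fully commutative nature of $W^j$): two incomparable boxes adjacent in the linear extension have commuting simple generators, so their order in the linear extension may be transposed without changing $\v(D)$. Iterating such transpositions, one refines the linear extension so that $[x',y']$ is a genuine poset interval read contiguously, at which point $u = v(D|_{(x',y')})$ and the triple $(x',y', D|_{(x',y')})$ is a $\Le$-move performable on $D$. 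Pushing this refinement argument through carefully is the main content of the second half of the proof.
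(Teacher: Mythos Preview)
Your argument for the first assertion is correct and takes a different route from the paper's. The paper invokes the greedy right-to-left construction of the PDS $\v_+$ from \cite[Lemma 3.5]{MR} and observes that a $\Le$-move can only shift simple generators rightward in the word; since $\v_+$ is already rightmost, no move is available. Your approach is more self-contained: you choose a linear extension reading $[x,y]$ contiguously, and then the PDS inequalities at positions $p_y,\ldots,p_x$ force $\ell(Au)=\ell(A)+m+2$, contradicting $\ell(u)\le m$. This avoids citing the external characterization of $\v_+$ at the cost of a short explicit length computation; both arguments are clean, and yours has the advantage of not depending on any particular description of the PDS.

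For the second assertion your strategy coincides with the paper's: locate a failure of the PDS condition and apply the exchange lemma to produce the pair of boxes. The paper phrases this as finding the earliest box $b$ (in reading order) where $D$ and $D_+$ differ, deduces $D(b)=+$ and $D_+(b)=0$ from the greedy description, and then exchanges inside $\v(D|_A)$ to find $b'$; it then simply asserts that $(b,b',D|_{(b,b')})$ is a performable $\Le$-move. You work directly with the first PDS failure, which is equivalent, and you are more explicit than the paper about the genuine technical point: exchange only gives $j_m<k$ in the chosen linear order, not $x'<y'$ in $Q^j$, and the letters of $u$ may correspond to boxes outside the poset interval $(x',y')$. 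The paper's proof glosses over exactly this issue. Your proposed remedy via Lemma~\ref{lem:easycommute} is the right one and can be pushed through: any $c$ with $D(c)=0$ appearing at the right end of $u$ that is incomparable to $x'$ has $s_c\alpha_{x'}=\alpha_{x'}$, so it may be dropped from $u$ (equivalently, commuted past $x'$ in the linear extension) without affecting the relation $u\cdot\alpha_{x'}=\alpha_{y'}$; iterating from the right, and then symmetrically from the left for boxes incomparable to $y'$, reduces to the case where every remaining box lies in $(x',y')$ and in particular $x'<y'$.
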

\begin{proof}
Let us assume that a reading order has been fixed for $O_w$ and let
$n=\ell(w)$.  It is known (\cite[Lemma 3.5]{MR}) that the unique PDS
$\v_+=t_1 t_2 \dots t_n$ for $v$ can be constructed greedily from
the right.  More precisely, we have that $v_{(n)}=v$, and once we
have determined $t_i \dots t_n$ we can determine $v_{(i-1)}$; to
construct $\v_+$ we  set
\begin{equation}
\label{eq:greedy}
t_j = \begin{cases} s_{i_j} &\mbox{if
$v_{(j)}s_{i_j} <
v_{(j)}$} \\
1 &\mbox{otherwise.}\end{cases}
\end{equation}
The application of a $\Le$-move shifts simple generators to the
right in the corresponding word.  Since $\v_+$ already corresponds
to the rightmost word, we deduce that no $\Le$-moves can be
performed on a $\Le$-diagram.

Now suppose an $\oplus$-diagram $D$ is not a $\Le$-diagram.  Let $D$
differ from its $\Le$-ification $D_+$ at a box $b$ where $b$ is
chosen to be as early as possible in the reading order.  By the
greedy property of a PDS, $D(b) = +$ and $D_+(b) = 0$.
 Denote the
set of boxes occurring after $b$ in the reading order by $A \subset
O_w$.  Then $\v(D_+)$ has the form $\v((D_+)|_A) s_b \v'$ for some $\v'$
and $\v(D)$ has the form $\v((D|_A) \v'$, which implies that
$v(D|_A)s_b = v((D_+)|_A)$ and
$v((D_+)|_A) < v(D|_A)$.  Thus by the exchange axiom, $v(D|_A)s_b$
is obtained by omitting a simple generator from $\v(D|_A)$. Let $b'$ be
the box corresponding to this simple generator;
then the $\Le$-move
$(b,b',v(D|_{(b,b')}))$ can be performed on $D$.  Repeating this, we
eventually obtain $D_+$.
\end{proof}

We say that a set $\s$ of $\Le$-moves is {\it complete} if every
$\oplus$-diagram $D$ can be $\Le$-ified using $\Le$-moves in $\s$
only.

\begin{problem}
Describe a complete set of $\Le$-moves.
\end{problem}

%\fixmeplease{I'm not sure I like this notion of minimal...}
%
%One way to obtain a complete set of $\Le$-moves is as follows.  Let
%us say that a $\Le$-move $M = (x,y,S)$ is {\it minimal} if
%\begin{enumerate}
%\item
%there is some $\Le$-diagram $D$ so that $D|_{(x,y)}$ is compatible
%with $S$;
%\item
%there is no $\Le$-move $M' = (x',y,S')$ where $x \leq x' < y$ and
%$S'$ is either equal to or generalizes $S|_{x',y}$; and
%\item
%there is no $\Le$-move $M' = (x,y',S')$ where $x < y' < y$ and $S'$
%is either equal to or generalizes $S|_{x,y'}$.
%\end{enumerate}
%
%\begin{proposition}
%The set $\s$ of all minimal $\Le$-moves is complete.  Any proper
%subset of $\s' \subsetneq \s$ is not complete.
%\end{proposition}

\section{Type $A_{n-1}$}

In this section we will give a compact description of $\Le$-diagrams
in type $A_{n-1}$ and observe that they are the same as the
$\Le$-diagrams defined by Postnikov \cite{Postnikov}.  Let $(W,j) =
(A_{n-1},j)$ so that any $O_w$ can be identified with a Young
diagram within a $j \times (n-j)$ rectangle.

\begin{theorem}\label{thm:LeA}
An $\oplus$-diagram of shape $O_w$ in type $A_{n-1}$ is a
$\Le$-diagram if and only if there is no $0$ which has a $+$ below
it and a $+$ to its left.
\end{theorem}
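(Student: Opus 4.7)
Plan. I would combine Proposition~\ref{prop:greedyLe} (an $\oplus$-diagram is a $\Le$-diagram iff no $\Le$-move can be performed on it) with the criterion of Remark~\ref{rem:Le} (a $\Le$-move from $y$ to $x<y$ with $D(y)=0$ exists precisely when $v(D|_{(x,y)})^{-1}\beta=\alpha$, where $\alpha,\beta$ are the simple roots attached to $x$ and $y$). This reduces the theorem to the following: $D$ admits some $\Le$-move if and only if $D$ contains a $0$-box with both a $+$ below it and a $+$ to its left.

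For the direction ``forbidden pattern $\Rightarrow$ applicable $\Le$-move,'' given a $0$ at $y=(r_{0},c_{0})$ I would take the nearest $+$-boxes below and to the left, placed at $(r_{1},c_{0})$ and $(r_{0},c_{1})$ with $r_{1},c_{1}$ chosen maximal; by maximality, every box of $D$ strictly between $y$ and these two $+$'s along the respective column and row is $0$. The box $x:=(r_{1},c_{1})$ lies in $O_{w}$ since $O_{w}$ is a lower order ideal. I propose the $\Le$-move $(x,y,S)$ with $S:=D|_{(x,y)}$, and I would verify $v(D|_{(x,y)})^{-1}\beta=\alpha$ directly. Using Lemma~\ref{lem:easycommute}, the simple reflections attached to the ``interior'' boxes of $(x,y)$ (those with both coordinates strictly between those of $x$ and $y$) are incomparable with the ``hook'' $\{(r,c_{0}):r_{1}\le r<r_{0}\}\cup\{(r_{0},c):c_{1}\le c<c_{0}\}$ on the appropriate side and commute past it, so $v(D|_{(x,y)})$ localizes to a product along the hook. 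A direct computation in $S_{n}$ of this hook product then yields $v^{-1}\beta=\alpha$.

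For the direction ``$\Le$-move $\Rightarrow$ forbidden pattern,'' I would fix a $\Le$-move $(x,y,S)$ on $D$ with $D(y)=0$ and interpret $v(D|_{(x,y)})$ as a wiring diagram in $S_{n}$: each $0$-box is a crossing of two consecutive strands, and each $+$-box a non-crossing. The condition $v^{-1}\beta=\alpha$ says that the two strands which would meet at $y$ (if $y$ were promoted to a crossing) are precisely the two strands that meet at $x$. Since in type $A$ the labels along a row (resp.\ column) of $Q^{j}$ form a consecutive chain of simple reflections, and incomparable boxes commute (Lemma~\ref{lem:easycommute}), tracing these two strands backward through $(x,y)$ forces each to ``turn'' at a $+$-box: one turn must lie in the row of $y$ strictly to its left and the other in the column of $y$ strictly below it. This exhibits the $\Le$-pattern at $y$.

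The main obstacle is executing the wiring-diagram analysis in the second direction, since $(x,y)$ is generally a (possibly non-convex) skew sub-poset of $Q^{j}$ rather than a Young subdiagram; however, the rigid type $A$ labeling of $Q^{j}$ together with Lemma~\ref{lem:easycommute} keeps the strand-tracing tractable and forces the turning $+$-boxes to lie exactly where the $\Le$-pattern predicts.
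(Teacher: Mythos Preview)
Your overall strategy---reducing both directions to the existence or nonexistence of a $\Le$-move via Proposition~\ref{prop:greedyLe} and Remark~\ref{rem:Le}---is sound, and your backward direction (wiring-diagram tracing) is a legitimate alternative to the paper's counting argument. The paper in fact remarks that a wiring-diagram proof in the style of \cite[Theorem~19.1]{Postnikov} is possible, but chooses instead to show only that any diagram violating the $\Le$-condition admits a rectangular $\Le$-move, and then closes the argument by quoting the known bijection between $\Le$-condition diagrams and pairs $(x,w)\in\I^j$ from \cite{Postnikov,Williams2}.

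However, your forward direction has a genuine error. The commutation claim---that the ``interior'' boxes of $(x,y)$ commute past the ``hook'' so that $v(D|_{(x,y)})$ reduces to a product along the hook---is false, and consequently $(x,y,D|_{(x,y)})$ need not be a $\Le$-move for an arbitrary violating $0$ at $y$. Concretely, take the $3\times 3$ rectangle with $y$ at the top-right (simple label~$3$) and $x$ at the bottom-left (also label~$3$); put $+$'s at the top-left and bottom-right corners (labels~$1$ and~$5$), $0$'s in the rest of the top row and right column, a $+$ at the box just below the top-left corner (label~$2$), and $0$'s elsewhere. Then $y$ exhibits the forbidden pattern, your $x$ is the bottom-left corner, and $v(D|_{(x,y)})=s_2s_4s_3s_4$, but $v(D|_{(x,y)})^{-1}\alpha_3=\alpha_2+\alpha_3\neq\alpha_3$. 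Note also that your decomposition of $(x,y)$ into ``hook'' and ``interior'' omits the left column and bottom row of the rectangle, where this extra $+$ lives. The paper avoids this by choosing $y$ to be a \emph{southwesternmost} violating $0$; that minimality forces every box of $(x,y)$ other than the two corner $+$'s to be $0$ (the filling $S_0$ of \eqref{eq:rectangular}), and Proposition~\ref{prop:LeMoveA} then supplies the required $\Le$-move. In the example above, the minimal violating $0$ is one row below $y$, and the rectangle it determines does have filling $S_0$. Your argument is repaired by inserting this minimality hypothesis and dropping the commutation claim.
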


In Theorem \ref{thm:LeA}, ``below" means below and in the same
column, while ``to its left" means to the left and in the same row.
If an $\oplus$-diagram satisfies these condition, we say that it
possesses the $\Le$-condition.  Theorem \ref{thm:LeA} can be proved
using the wiring-diagram argument from \cite[Theorem 19.1]{Postnikov}.
This is
similar to the proof of the (much)
more difficult Theorem \ref{thm:LeD} below.  Instead, our proof
below will appeal to the fact that the cells of the type $A_{n-1}$
Grassmannians have previously been enumerated.

Let $x < y$ be two distinct, comparable boxes in $O_w$. Then $[x,y]$
is a rectangle (or as a poset, a product of chains).  Given $x < y$,
let $S_{0}$ denote the following $\{0,+,?\}$ filling of $(x,y)$:
\begin{equation}\label{eq:rectangular}
\tableau[sbY]{+ &0 &0 &0 &y \\ 0 &0 &0 &0 &0 \\
0&0&0&0&0\\x&0&0&0&+}
\end{equation}
That is, $S_{0}$ is filled with $0$'s except for the top left and
bottom right corners, where it is filled with $+$'s.

\begin{proposition}\label{prop:LeMoveA}
The triples $(x,y,S_0)$ defined above are $\Le$-moves.
\end{proposition}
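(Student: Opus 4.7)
The plan is to apply Remark \ref{rem:Le}: since $S_0$ contains no $?$ entries, the only $\oplus$-diagram of shape $(x, y)$ compatible with $S_0$ is $S_0$ itself, so it suffices to verify the single identity $u^{-1} \cdot \beta = \alpha$ for $u := v(S_0|_{(x,y)})$, where $\alpha$ and $\beta$ are the simple roots labeling $x$ and $y$ respectively. I would coordinatize $[x, y]$ as an $h \times w$ product of chains so that $x = (0, 0)$ has label $a$, $y = (h-1, w-1)$ has label $b := a + w - h$, and the box at $(i,j)$ has label $a + j - i$; in particular $\mathrm{TL} := (h-1, 0)$ has label $a - h + 1$ and $\mathrm{BR} := (0, w-1)$ has label $a + w - 1$. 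Under the identification $W = S_n$ with $s_i = (i, i+1)$ and $\alpha_i = e_i - e_{i+1}$, the identity $u^{-1}(\alpha_b) = \alpha_a$ is equivalent to the pair of permutation equalities $u(a) = b$ and $u(a+1) = b+1$.

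To compute $u$, I would fix the linear extension of $(x,y)$ that reads rows from the bottom up and left-to-right within each row. Since the $S_0$ filling omits exactly the simple reflections corresponding to $\mathrm{TL}$ and $\mathrm{BR}$, this produces the factorization
\[
u \;=\; W_{h-1}\, W_{h-2}\, \cdots\, W_1\, W_0,
\]
where $W_0 = s_{a+w-2}\, s_{a+w-3} \cdots s_{a+1}$ is the bottom row (with $\mathrm{BR}$ omitted), $W_{h-1} = s_{b-1}\, s_{b-2} \cdots s_{a-h+2}$ is the top row (with $\mathrm{TL}$ omitted), and $W_i = s_{a-i+w-1}\, s_{a-i+w-2} \cdots s_{a-i}$ for $1 \leq i \leq h-2$ is each full interior row. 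A standard calculation shows that an interior $W_i$ acts on indices as a cyclic shift of the interval $[a-i,\, a-i+w]$; in particular it fixes any value outside this interval, sends $a-i+1 \mapsto a-i$, and sends $a-i+w \mapsto a-i+w-1$. Applying the factors from right to left to the two strands $a$ and $a+1$, one finds that $W_0$ fixes $a$ and sends $a+1$ to $a+w-1$, each interior $W_i$ decrements both active strand values by one (so that after $W_{h-2}$ the two strands sit at $a-h+2$ and $b+1$), and finally $W_{h-1}$ takes $a-h+2$ step-by-step up to $b$ while fixing $b+1$, since its indices are all at most $b-1$. Composing gives $u(a) = b$ and $u(a+1) = b+1$, as required.

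The main obstacle is combinatorial bookkeeping: verifying that the displayed factorization of $u$ really arises from the chosen reading order (the corner rows have one fewer factor than an interior row precisely because $S_0$ places a $+$ at the matching corner), and then tracking the action of each $W_i$ on the two relevant strand values. Both steps are elementary, but the cyclic-shift lemma for a consecutive decreasing product of simple transpositions is worth verifying once carefully, for a single interior row as well as for each corner row, before invoking it inductively up through the rows.
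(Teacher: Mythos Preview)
Your proposal is correct and follows essentially the same strategy as the paper: both invoke the criterion of Remark~\ref{rem:Le}, note that $S_0$ has no $?$'s so only one diagram must be checked, fix the row-by-row reading order (bottom to top, left to right within each row), and then carry out a direct inductive computation row by row. The only cosmetic difference is that the paper performs the calculation on the root side, tracking how each row factor transforms $\alpha_c$ through the intermediate roots $\alpha_{k,k+c-2}$ until reaching $\alpha_r$, whereas you translate the identity $u^{-1}(\alpha_b)=\alpha_a$ into the pair of permutation equalities $u(a)=b$, $u(a+1)=b+1$ and track the two strands through the row factors; these are two equivalent bookkeepings of the same computation.
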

We will call the $\Le$-moves $(x,y,S_0)$ the {\it rectangular}
$\Le$-moves.
\begin{proof}
For simplicity and concreteness let us suppose that the top left
hand $+$ lies on the diagonal with corresponding simple generator
$s_1$, and that the rectangle $[x,y]$ has $r\geq 2$ rows and $c \geq
2$ columns. We use the criterion for a $\Le$-move described in
Remark \ref{rem:Le}.  Note that $\alpha = \alpha_r$ and $\beta =
\alpha_c$.

Since $S_0$ has no $?$'s we need only check (\ref{eq:LeCheck}) for
$D = S_0$.  Furthermore we pick the reading order obtained by
reading the rows from left to right starting from the bottom row:
$$
\tableau[sbY]{15&16 &17 &18 & \bl \\ 10&11&12 &13 &14\\
5&6&7&8&9\\\bl &1&2&3&4}
$$
We calculate using the notation $\alpha_{ij} = \alpha_i + \cdots +
\alpha_j$,
\begin{align*}
&v(D)^{-1} \cdot \alpha_c\\
&= (s_{r+1} \cdots s_{r+c-1})(s_{r-1}s_r \cdots s_{r+c-2}) \cdots
(s_2 s_3 \cdots s_{c+1})(\hat{s}_1 s_2 s_3 \cdots s_{c-1})\alpha_c\\
& = (s_{r+1} \cdots s_{r+c-1})(s_{r-1}s_r \cdots s_{r+c-2}) \cdots
(s_2 s_3 \cdots s_{c+1}) \alpha_{2,c} \\
&=(s_{r+1} \cdots s_{r+c-1})(s_{r-1}s_r \cdots s_{r+c-2}) \cdots
\alpha_{3,c+1} \\
&= \cdots\\
&=(s_{r+1} \cdots s_{r+c-2})
\alpha_{r,r+c-2} \\
&= \alpha_r.
\end{align*}
This proves that $(x,y,S_0)$ is indeed a $\Le$-move.
\end{proof}

\begin{theorem}\label{thm:LeAcomplete}
These $\Le$-moves form a complete system of $\Le$-moves.
\end{theorem}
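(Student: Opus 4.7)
The plan is to establish Theorem~\ref{thm:LeAcomplete} by combining two observations: (a) every $\oplus$-diagram $D$ that is not already a $\Le$-diagram admits an applicable rectangular $\Le$-move, and (b) any sequence of rectangular $\Le$-moves starting from $D$ must terminate at $D_+$. Assertion (b) is nearly free: rectangular moves are $\Le$-moves by Proposition~\ref{prop:LeMoveA}, so the monovariant used in Proposition~\ref{prop:greedyLe} (each move either decreases the number of $0$'s or pushes a $0$ to a poset-smaller box) forces termination; since $v(D)$ is preserved throughout, the terminal diagram has the same $v$ as $D$, and (a) guarantees it has no bad $0$'s. By Theorem~\ref{thm:LeA} it is then a $\Le$-diagram, and uniqueness of the $\Le$-ification (Lemma~\ref{l:positive}) identifies it with $D_+$. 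So the content is in (a).

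For (a), given a non-$\Le$-diagram $D$, I would pick the \emph{bad} $0$ (in the sense of Theorem~\ref{thm:LeA}: a $0$ with a $+$ to its left in the same row and a $+$ below in its column) that is minimal under the order ``smaller row first, then smaller column.'' Call it $y$, let $p$ be the nearest $+$ to the left of $y$ in its row, $q$ the nearest $+$ below $y$ in its column, and $x$ the box at the intersection of the column of $p$ and the row of $q$. Then $[x,y]$ is a rectangle with $p$ at its NW corner and $q$ at its SE corner; to apply the rectangular $\Le$-move $(x,y,S_0)$ it suffices to check that $(x,y)$ has only $0$'s outside $\{p,q\}$.

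Suppose for contradiction some $+$ sits at $(r,c) \in (x,y) \setminus \{p,q\}$. The choices of $p$ and $q$ as nearest already forbid $(r,c)$ from lying on the top row or right column of the rectangle. In each of the remaining three positions (left column, bottom row, strict interior), pairing the stray $+$ with the known $+$ on the opposite side of the rectangle exhibits a bad $0$ at either $(r,y_c)$ or $(y_r,c)$: this entry is $0$ because it lies on the already-cleared segment of the top row or right column, and it inherits a $+$ both to its left (via $(r,c)$ or $p$) and below (via $(r,c)$ or $q$). In every case the new bad $0$ is strictly earlier than $y$ in the chosen order, contradicting minimality. The main obstacle is this bookkeeping case analysis and the care needed to verify that the witnessing $+$'s genuinely lie in the already-cleared row/column segments; once it is done, Theorem~\ref{thm:LeAcomplete} follows at once from (a) and (b), with no further input beyond Theorem~\ref{thm:LeA} and the general termination machinery for $\Le$-moves.
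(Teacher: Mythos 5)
Your argument is correct and is essentially the paper's: the heart of both is the same case analysis showing that at a minimal bad $0$ (with $p$, $q$ the nearest $+$'s to its left and below) the open rectangle $(x,y)$ is forced to equal $S_0$, since any stray $+$ would produce a strictly earlier bad $0$ on the top row or right column of the rectangle. The only difference is packaging: you invoke Theorem \ref{thm:LeA} as an input to identify the terminal diagram as a $\Le$-diagram, whereas the paper proves Theorems \ref{thm:LeA} and \ref{thm:LeAcomplete} simultaneously, getting one inclusion from the same move argument and the other from the known enumeration of cells -- both routes are legitimate since Theorem \ref{thm:LeA} also admits an independent wiring-diagram proof.
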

\begin{proof} [Proof of Theorems \ref{thm:LeA} and \ref{thm:LeAcomplete}.]
Let $D$ be an $\oplus$-diagram which  does not satisfy the
$\Le$-condition.  Let $y$ be one of the boxes closest to the bottom
left which contains a $0$ violating the $\Le$-condition.  Let $z_1$
($z_2$) be the box to the left of (below) $y$ containing a $+$ which
is closest to $y$. Let $x$ be the box which forms a rectangle with
$y$, $z_1$, and $z_2$. We claim that $D|_{(x,y)} = S_0$ as in
(\ref{eq:rectangular}).

$$
\tableau[sbY]{z_1&{0}&{0}&{0}&y \\ {}&{}&{}&{}&{0} \\ {}&{}&{}&{}&{0}\\
x & {}&{}&{}&z_2}
$$

Otherwise there is a box $t \in (x,y) - \{z_1,z_2\}$ containing a
$+$.  We pick $t$ closest to $y$.  If $t$ is not below $z_1$ then
above $t$ is a box $y'$ in the same row as $y$ such that $D(y') =
0$.  This $y'$ thus violates the $\Le$-condition and is closer to
the bottom left than $y$, a contradiction.  A similar argument holds
if $t$ is not to the left of $z_2$.  We conclude that $t$ does not
exist.

Thus the rectangular $\Le$-move $(x,y,S_0)$ can be performed on $D$.
Therefore the $\Le$-diagrams must be a subset of those
$\oplus$-diagrams which satisfy the $\Le$-condition, that is, such
that there is no $0$ which has a $+$ below it and a $+$ to its left.
But in fact it has been shown that the $\oplus$-diagrams satisfying
the $\Le$-condition are in bijection with pairs $(x,w)$ where
$x\in W$, $w\in W^J$, and $x\leq w$ \cite{Postnikov, Williams2}. Therefore the
$\Le$-diagrams must be exactly those $\oplus$-diagrams satisfying
the $\Le$-condition.  This proves Theorems \ref{thm:LeA} and
\ref{thm:LeAcomplete}.
\end{proof}

In \cite{Postnikov}, Postnikov studied the totally non-negative part
of the type A Grassmannian $(Gr_{k,n})_{\geq 0}$, and showed that it
has a cell decomposition where cells are in bijection with certain
combinatorial objects he called $\Le$-diagrams.  Postnikov's
$\Le$-diagrams are obtained from ours by reflecting in a horizontal
axis.  Since Postnikov was using the English convention for Young
diagrams whereas we are using French,
Theorem \ref{thm:LeA} shows that our definition of $\Le$-diagrams
is consistent with Postnikov's definition.

%He defined $\Le$-diagrams to be Young diagrams inside a $k \times
%n-k$ rectangle where the boxes are filled with $0$'s and $+$'s
%subject to the following condition: there is no $0$ which has a $+$
%above it and a $+$ to its left. (Here, ``above" means above and in
%the same column, and ``to its left" means to the left and in the
%same row.) Since Postnikov was using the English convention for
%Young diagrams whereas we are using French, it is clear that Theorem
%\ref{thm:LeA} is consistent with Postnikov's definition.
\section{Type $(B_n, n)$}
Now let $(W,j) = (B_n,n)$ so that $O_w \subset Q^j$ can be
identified with a shape (a lower order ideal)
within a staircase of size $n$.  We refer to
the $n$ boxes along the diagonal of $Q^j$ as the diagonal boxes.

\begin{theorem}\label{thm:LeB}
A type $(B_n,n)$ $\Le$-diagram is an $\oplus$-diagram $D$ of shape
$O_w$ such that
\begin{enumerate}
\item if there is a $0$ above (and in the same column as) a $+$ then
all boxes to the left and in the same row as that  $0$ must also be
$0$'s.
\item any diagonal box containing a $0$ must have {\it only} $0$'s to the left of it.
\end{enumerate}
\end{theorem}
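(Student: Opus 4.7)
My plan is to mirror the proof of Theorem~\ref{thm:LeA}: first establish the forward implication by exhibiting $\Le$-moves that discharge any violation of~(1) or~(2), then invoke a counting argument for the converse.

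Two families of $\Le$-moves are required. The rectangular moves of Proposition~\ref{prop:LeMoveA} transfer verbatim to handle~(1)-violations. For~(2) one needs a new family of ``diagonal'' moves $(x,y,S)$ with $y=(k,k)$ and $x=(k-d,k-d)$ two diagonal boxes separated by $d\geq 1$ steps. When $d=1$ the interval $(x,y)=\{(k,k-1)\}$ and $S((k,k-1))=+$, so $v(S)=1$. For $d\geq 2$ the interval $(x,y)$ consists of the staircase boxes in rows $k-d+1,\dotsc,k$ and columns $k-d,\dotsc,k-1$, and I would take $S$ to have $+$ at $(k,k-d)$ and $0$ elsewhere. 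A direct Weyl-group computation (using commutativity of $s_i$ with $s_n$ for $i\leq n-2$ and the length-$4$ braid relation $s_{n-1}s_ns_{n-1}s_n=s_ns_{n-1}s_ns_{n-1}$) shows $v(S)\alpha_n=\alpha_n$, so the $\Le$-move condition $v(S)^{-1}\alpha_y=\alpha_x$ holds since both roots equal $\alpha_n$. Note that any $\Le$-move into a diagonal $y$ must have diagonal $x$, since the Weyl group preserves root lengths and $\alpha_n$ is the unique short simple root of $B_n$.

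For the forward direction, suppose $D$ is not a $\Le$-diagram. If $D$ violates~(1), pick a $0$-violator $y=(r,c)$ closest to the bottom-left. A diagonal box has no box below it in its column (the would-be position $(r',k)$ with $r'<k$ fails the staircase condition $c\leq r$), so $y$ is non-diagonal. Taking $r'<r$ maximal with $D((r',c))=+$ and $c'<c$ maximal with $D((r,c'))=+$, the opposite corner $x=(r',c')$ lies in the staircase since $c'<c\leq r'$. Hence $[x,y]$ is a genuine rectangle in $O_w$, and the bottom-left-minimality argument of Theorem~\ref{thm:LeA} supplies the rectangular $\Le$-move. If $D$ does not violate~(1) but violates~(2), pick a diagonal $0$-violator $y=(k,k)$ with $k$ minimal and $c'$ maximal with $D((k,c'))=+$, and apply the diagonal move with $d=k-c'$. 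Absence of a~(1)-violation forces $D((r,r))=0$ for every diagonal box with $k-d<r<k$, because otherwise $D((k,r))$ (which equals $0$ by maximality of $c'$, since $r>c'$) would have a $+$ below it at $(r,r)$ and a $+$ to its left at $(k,c')$. Minimality of $k$ then forces $D((r,c))=0$ for all $(r,c)\in(x,y)$ with $c<r$ via condition~(2), making $D$ compatible with $S$.

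For the backward direction, I would invoke the enumeration in Section~\ref{s:enumerate}: the total number of $\oplus$-diagrams satisfying (1) and~(2), summed over all shapes $O_w$, equals $|\I^j|$ via the bijection with type-$B$ decorated permutations. Since this also equals the number of $\Le$-diagrams by Proposition~\ref{prop:cells}, the forward-direction inclusion must be an equality. The main obstacle is the Weyl-group calculation verifying $v(S)\alpha_n=\alpha_n$ for each $d$ and specifying the precise filling $S$ in a suitable reading order so the resulting word evaluates to a stabilizer of~$\alpha_n$; this is where the length-$4$ braid relation genuinely distinguishes type~$B$ from type~$A$.
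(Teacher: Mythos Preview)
Your argument that any $\oplus$-diagram violating (1) or (2) admits a $\Le$-move (rectangular or diagonal) is essentially the paper's, and it correctly establishes one direction: every $\Le$-diagram satisfies (1) and (2). (Your phrasing ``suppose $D$ is not a $\Le$-diagram'' is backwards; what you actually argue is that a violation of (1) or (2) permits a $\Le$-move, hence by Proposition~\ref{prop:greedyLe} such a $D$ cannot be a $\Le$-diagram.)

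The genuine gap is in the converse. You propose to invoke Section~\ref{s:enumerate} and the bijection with type-$B$ decorated permutations to show that the $\oplus$-diagrams satisfying (1) and (2) already have cardinality $|\I^j|$. But the bijections in Sections~\ref{DecPerms} and~\ref{s:enumerate} are stated and proved for type-$B$ $\Le$-diagrams, and the proof of Theorem~\ref{Bcommutative} explicitly uses that type-$B$ $\Le$-diagrams are exactly the diagonally symmetric type-$A$ $\Le$-diagrams --- which is equivalent to the very statement you are trying to prove. In type $A$ the analogous counting step works because Postnikov and Williams had \emph{independently} enumerated the diagrams satisfying the pattern condition; no such independent count exists here, so as written your argument is circular.

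The paper's converse avoids counting altogether. If $D$ satisfies (1) and (2), then its reflection $\iota(D)$ over the diagonal is a type-$(A_{2n-1},n)$ $\oplus$-diagram satisfying the type-$A$ $\Le$-condition of Theorem~\ref{thm:LeA}, hence $\v(\iota(D))$ is a PDS. Since $\iota: B_n \hookrightarrow S_{\{\pm 1,\dots,\pm n\}}$ preserves Bruhat order, a PDS for $\iota(v)$ inside $\iota(\w)$ restricts to a PDS for $v$ inside $\w$ (observation~\eqref{reflectPDS}), so $D$ is a $\Le$-diagram. This single structural step replaces the enumeration you need.
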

If an $\oplus$-diagram $D$ satisfies the conditions above we will say that it
satisfies the $\Le$-conditions.

We now provide some $\Le$-moves which will turn out to be complete.
Let $x < y$ be two distinct, comparable boxes in $O_w$ such that
$[x,y]$ is a rectangle.  Denote by $S_{0}$ the filling of $(x,y)$ as
in (\ref{eq:rectangular}).  The following result is proved in the
same manner as Proposition \ref{prop:LeMoveA}.

\begin{proposition}\label{prop:LeMoveB1}
The triples $(x,y,S_0)$ defined above are $\Le$-moves.
\end{proposition}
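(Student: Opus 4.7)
The plan is to mirror the proof of Proposition \ref{prop:LeMoveA} essentially verbatim, shifting indices by the starting label of the rectangle and verifying that the ``type $B$-specific'' simple reflection $s_n$ is inert throughout the computation. Using display coordinates $(i,k)$ on $Q^j$ for $(B_n,n)$ with $i$ counted from the top and $k$ from the left, one reads off directly from Figure \ref{fig:posets} that the simple label at $(i,k)$ is $i+k-1$. Consequently, inside an $r\times c$ rectangle $[x,y]$ with top-left label $m$, the label at local position $(p,q)$ (with $(1,1)$ the top-left corner) is $m+p+q-2$; so $\alpha=\alpha_{m+r-1}$ (from $x$ at local $(r,1)$), $\beta=\alpha_{m+c-1}$ (from $y$ at local $(1,c)$), and the labels appearing in the rectangle form the consecutive interval $\{m,m+1,\dots,m+r+c-2\}$.

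The crucial observation is that $s_n$ never appears in $v(S_0)$. The rectangle sits inside the staircase $Q^j$ precisely when its bottom-right corner lies in $Q^j$, which forces $m+r+c-2\le n$. Inside the rectangle a box has label $n$ iff $p+q=n-m+2\ge r+c$; since $(p,q)\in[1,r]\times[1,c]$ forces $p+q\le r+c$, equality must hold throughout and the unique solution is $(p,q)=(r,c)$, the bottom-right corner. Because $S_0$ marks that corner with a $+$, the generator $s_n$ is omitted from the subexpression, and every simple reflection entering $v(S_0)$ has index in $\{m+1,\dots,m+r+c-3\}\subseteq\{1,\dots,n-1\}$.

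I would then take the same reading order used in the type $A$ proof (rows of the rectangle left-to-right, starting from the bottom row) and apply $v(S_0)^{-1}$ to $\beta=\alpha_{m+c-1}$ step by step. Every intermediate vector takes the form $\alpha_{i,j}=\alpha_i+\alpha_{i+1}+\cdots+\alpha_j$ with $i\ge m+1$ and $j\le m+r+c-3\le n-1$. Such $\alpha_{i,j}$ are long roots of $B_n$ lying in the type $A_{n-1}$ subsystem spanned by $\alpha_1,\dots,\alpha_{n-1}$, and each $s_\ell$ with $\ell\le n-1$ acts on them by exactly the type $A$ formulas ($s_\ell\alpha_{i,j}=\alpha_{i+1,j}$ if $\ell=i$, $s_\ell\alpha_{i,j}=\alpha_{i,j+1}$ if $\ell=j+1$, and the internal/disjoint cases that fix $\alpha_{i,j}$). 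The cascade of simplifications displayed in the proof of Proposition \ref{prop:LeMoveA} therefore carries over verbatim with all indices shifted by $m-1$, yielding $v(S_0)^{-1}\cdot\beta=\alpha_{m+r-1}=\alpha$ and verifying the criterion of Remark \ref{rem:Le}.

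The only delicate step is the uniqueness of the label-$n$ box inside the rectangle; once that one-line inequality from the staircase-fitting condition is established, the rest of the proof is a straightforward transcription of the type $A$ argument with shifted indices, and I do not expect any further obstacle.
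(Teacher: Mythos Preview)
Your proposal is correct and takes essentially the same approach as the paper, whose own proof consists solely of the sentence ``The following result is proved in the same manner as Proposition~\ref{prop:LeMoveA}.'' Your explicit verification that the rectangle-in-staircase constraint $m+r+c-2\le n$ forces the only possible label-$n$ box to be the bottom-right (hence $+$-marked) corner is exactly the point the paper leaves implicit, and it cleanly justifies transplanting the type~$A$ computation with indices shifted by $m-1$.
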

We will call the $\Le$-moves $(x,y,S_0)$ the {\it rectangular}
$\Le$-moves.

Now let $x < y$ be two distinct diagonal boxes, so that $[x,y]$ is
itself a staircase.  Denote by $S_1$ the following filling of
$(x,y)$:
$$
\tableau[sbY]{+&0&0&0&y \\ 0&0&0&0 \\ 0&0&0 \\ 0&0 \\ x}
$$
In other words, $S_1$ is filled with $0$'s with the exception of the
top-left corner box.

\begin{proposition}\label{prop:LeMoveB2}
The triples $(x,y,S_1)$ defined above are $\Le$-moves.
\end{proposition}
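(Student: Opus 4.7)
The plan is to emulate the computation in the proof of Proposition~\ref{prop:LeMoveA}, modified for the staircase geometry and the short-root behavior of $B_n$. By Remark~\ref{rem:Le}, since $S_1$ has no ``$?$'' entries the only $\oplus$-diagram compatible with $S_1$ is $S_1$ itself, so it suffices to verify $v(S_1)^{-1}\cdot \beta = \alpha$. Both diagonal boxes $x$ and $y$ carry the label $s_n$ in $Q^j$ for $(B_n,n)$, so $\alpha=\beta=\alpha_n$ and the task reduces to proving $v(S_1)^{-1}\alpha_n = \alpha_n$.

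First I will set up the sub-staircase $[x,y]$. Writing $k$ for its size and using local coordinates $(i',j')$ with $x=(k-1,0)$ and $y=(0,k-1)$, one checks that the box $(i',j')$ carries the simple label $s_{n-k+1+i'+j'}$; in particular the ``$+$'' at the top-left corner is labeled $s_{n-k+1}$. Picking the linear extension that reads the rows of $(x,y)$ from bottom to top and left to right within each row produces the word
\[
v(S_1) \;=\; (s_{n-1}s_n)(s_{n-2}s_{n-1}s_n)\cdots(s_{n-k+2}\cdots s_n)(s_{n-k+2}s_{n-k+3}\cdots s_{n-1}),
\]
the last block being the top row with both the ``$+$'' (deleting $s_{n-k+1}$) and $y$ (deleting $s_n$) removed.

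The heart of the proof is to apply these reflections in word order (leftmost first) to $\alpha_n$ and track the intermediate root, making inductive use of the expression $R_j := \alpha_n + \alpha_{n-1} + \cdots + \alpha_{n-j}$. The claim is that after processing the first $j$ bottom rows the running root equals $R_j$. For the inductive step, the leading reflection $s_{n-j-1}$ of the next row enlarges $R_j$ to $R_{j+1}$, while each subsequent reflection $s_{n-j+l}$ stabilizes $R_{j+1}$: its three adjacent summands contribute $+1,-1,+1$ on the coefficient of $\alpha_{n-j+l}$, leaving it unchanged. The final reflection of every row is $s_n$, and the $B_n$-specific identities $s_n(\alpha_{n-1}) = \alpha_{n-1}+2\alpha_n$ and $s_n(\alpha_n)=-\alpha_n$ together keep the coefficient of $\alpha_n$ at $+1$. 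After the last bottom row the running root is $R_{k-2} = \alpha_n + \alpha_{n-1} + \cdots + \alpha_{n-k+2}$.

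Finally I process the truncated top row. Because $R_{k-2}$ no longer contains $\alpha_{n-k+1}$, applying $s_{n-k+2}$ leaves only the contributions $-1$ (from $s_{n-k+2}(\alpha_{n-k+2})$) and $+1$ (from $s_{n-k+2}(\alpha_{n-k+3})$) on the coefficient of $\alpha_{n-k+2}$, which cancel and remove the summand; the subsequent reflections $s_{n-k+m}$ for $m = 3,\ldots,k-1$ successively peel off $\alpha_{n-k+3},\ldots,\alpha_{n-1}$ in the same way, leaving $\alpha_n$. This gives $v(S_1)^{-1}\alpha_n=\alpha_n$ and verifies the criterion of Remark~\ref{rem:Le}. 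The main point to watch is the single occurrence of the short-root reflection $s_n$ in each bottom row; once the $B_n$ cancellation is handled, the remainder is essentially the $A$-type calculation from the proof of Proposition~\ref{prop:LeMoveA}.
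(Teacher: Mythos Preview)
Your proof is correct and follows essentially the same approach as the paper: reduce to checking $v(S_1)^{-1}\alpha_n=\alpha_n$ via Remark~\ref{rem:Le} (there are no $?$'s), choose the row reading order, and compute the action on $\alpha_n$ row by row, using the $B_n$ identity $s_n(\alpha_{n-1})=\alpha_{n-1}+2\alpha_n$ at the short-root step. The only difference is the direction of the calculation: the paper applies the word from the right, so the top row acts first and the running root grows to $\alpha_{k+1,n}$ and then shrinks by one simple summand per subsequent row back to $\alpha_n$; you instead apply ``leftmost first,'' so the bottom rows act first, the root grows to $R_{k-2}$, and the truncated top row peels it back to $\alpha_n$. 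Since here $\alpha=\beta=\alpha_n$, showing $v\cdot\alpha_n=\alpha_n$ is equivalent to $v^{-1}\cdot\alpha_n=\alpha_n$, so this reversal is immaterial. One small notational slip: under the paper's convention (the reading order produces the reduced word \emph{right to left}), the word you display is actually $v(S_1)^{-1}$ rather than $v(S_1)$; this is harmless for exactly the same symmetry reason, but you may want to align the convention.
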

We call the $\Le$-moves $(x,y,S_1)$ diagonal $\Le$-moves.
\begin{proof}
We follow the same general strategy as in the proof of Proposition
\ref{prop:LeMoveA}, again using the row reading order.  Let us
assume that the top-left corner box of $(x,y)$ is labeled by simple
generator $s_k$. We calculate, using the notation $\alpha_{ij} =
\alpha_i + \cdots + \alpha_j$,
\begin{align*}
&v(D)^{-1} \cdot \alpha_n \\
&= (s_{n-1} s_n) \cdots (s_{k+1} \cdots s_{n-1} s_{n})(s_{k+1}
\cdots
s_{n-2} s_{n-1}) \alpha_n \\
&= (s_{n-1} s_n) \cdots (s_{k+1} \cdots s_{n-1} s_{n}) \alpha_{k+1,n} \\
& = (s_{n-1} s_n) \cdots \alpha_{k+2,n} \\
& = (s_{n-1} s_n) (\alpha_{n-1} + \alpha_n) \\
& = \alpha_n.
\end{align*}
This proves that $(x,y,S_1)$ is indeed a $\Le$-move.
\end{proof}

\begin{theorem}\label{thm:LeBcomplete}
The $\Le$-moves $(x,y,S_0)$ and $(x,y,S_1)$ form a complete system
of $\Le$-moves.
\end{theorem}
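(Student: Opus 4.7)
The strategy mirrors the proof of Theorems \ref{thm:LeA} and \ref{thm:LeAcomplete}: I would show that any $\oplus$-diagram $D$ violating at least one of the conditions of Theorem \ref{thm:LeB} admits a rectangular or diagonal $\Le$-move, so that iterating terminates (each move either reduces the number of $0$'s or moves a $0$ lower in the partial order) in some $D^*$ satisfying both conditions. A violation of (1) is necessarily at a non-diagonal $0$ at $y$, since diagonal boxes have nothing below in their column; one then proceeds exactly as in type $A$. Picking $y$ closest to the bottom-left, letting $z_1$ (resp.\ $z_2$) be the closest $+$ in its row (resp.\ column) and $x$ the fourth corner, the staircase constraint $c_{z_2} \le r_{z_2}$ becomes $c_y \le r_x$, so $[x,y]$ is an honest rectangle inside $Q^j$; minimality of $y$ gives $D|_{(x,y)}$ as a specialization of $S_0$, and the rectangular $\Le$-move of Proposition \ref{prop:LeMoveB1} applies.

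The new input is the case in which (1) holds throughout $D$ but (2) fails. I would take $y = (r_y, r_y)$ to be the lowest diagonal box violating (2), let $z_+ = (r_y, c_k)$ be the rightmost $+$ in row $r_y$ strictly left of $y$, and set $x = (c_k, c_k)$. Then I would verify $D|_{(x,y)} = S_1$ by inspecting each $(r,c) \in (x,y)$: entries of row $r_y$ strictly between $z_+$ and $y$ are $0$ by the rightmost choice, and for $(r,c)$ with $r < r_y$ and $c > c_k$, assuming $D((r,c)) = +$ would make $(r_y, c)$ a $0$ with $+$ below at $(r,c)$ and $z_+$ to its left, contradicting (1). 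The delicate sub-case is $c = c_k$ with $c_k < r < r_y$: considering the diagonal box $y' = (r, r)$, if $D(y') = 0$ then the $+$ at $(r, c_k)$ to its left makes $y'$ a violation of (2) below $y$, against the minimality of $y$; if $D(y') = +$ then $(r_y, r)$ is a $0$ (again by the rightmost choice) with a $+$ below at $y'$ and a $+$ to its left at $z_+$, contradicting (1). Either way $D((r,c_k)) = 0$, so $D|_{(x,y)} = S_1$ and the diagonal $\Le$-move of Proposition \ref{prop:LeMoveB2} applies.

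Combining, $D^*$ satisfies both $\Le$-conditions and $v(D^*) = v(D)$. As in the type $A$ argument (via \cite{Postnikov, Williams2}), an enumeration — in type $B$ supplied by the bijection between $\oplus$-diagrams satisfying the $\Le$-conditions and type $B$ decorated permutations described in Section \ref{DecPerms} — matches the count of such diagrams with the count of cells of $(G/P_j)_{\geq 0}$, forcing $D^* = D_+$. This simultaneously proves Theorems \ref{thm:LeB} and \ref{thm:LeBcomplete}. The principal technical obstacle is the sub-case $c = c_k$ above: it is the only place where both the minimality of $y$ and condition (1) must be used in tandem, and it is precisely the reason the diagonal $\Le$-move (and not merely the rectangular one) must be introduced in type $B$.
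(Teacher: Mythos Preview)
Your argument that any $\oplus$-diagram violating one of the conditions of Theorem~\ref{thm:LeB} admits a rectangular or diagonal $\Le$-move is correct and essentially what the paper does; your case analysis for the left column $c=c_k$ is in fact more explicit than the paper's two-line sketch (``using condition (1), the diagonal boxes in $(x,y)$ are $0$; using minimality of $y$, $D|_{(x,y)}=S_1$''), and it is fine.

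Where you diverge from the paper is in the converse step, i.e.\ in showing that an $\oplus$-diagram $D^*$ satisfying both conditions is already a $\Le$-diagram. The paper does this \emph{directly}: since the type $B$ $\Le$-conditions are exactly the symmetrization of the type $A$ ones, the reflection $\iota(D^*)$ is a type $(A_{2n-1},n)$ $\Le$-diagram by Theorem~\ref{thm:LeA}, hence $\v(\iota(D^*))$ is a PDS, and the observation \eqref{reflectPDS} then gives that $\v(D^*)$ is a PDS. No counting is needed.

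Your proposed enumeration, by contrast, has a circularity problem as written: the bijection $\Phi^B_3$ in Section~\ref{DecPerms} is defined on $\Le^B(n)$, the set of type $B$ \emph{$\Le$-diagrams}, and the proof of Theorem~\ref{Bcommutative} uses Theorem~\ref{thm:LeB} to identify this set with the fixed points of $\theta_{\Le}$. So you cannot invoke that section to count $\oplus$-diagrams satisfying the $\Le$-conditions without assuming what you are trying to prove. The idea is salvageable if you bypass Section~\ref{DecPerms} and argue directly in type $A$: reflection identifies $\{D:\text{$D$ satisfies the $B$ $\Le$-conditions}\}$ with the $\theta_{\Le}$-fixed type $A$ $\Le$-diagrams, and then the type $A$ bijections $\Phi_1,\Phi_3$ (which commute with the involutions) together with the Bruhat embedding $\iota$ give the count $|\I^B|$. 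But once you have written down ``$\iota(D^*)$ is a type $A$ $\Le$-diagram'', the paper's one-line application of \eqref{reflectPDS} finishes immediately, and the enumerative detour is unnecessary.
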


Before we prove Theorems \ref{thm:LeB} and \ref{thm:LeBcomplete}, we
recall the basic facts concerning the representation of $B_n$ as
signed permutations (see \cite{BB}). Let us identify the type
$A_{2n-1}$ Weyl group with the symmetric group $S_{\{\pm 1, \dots ,
\pm n \}}$. There is a homomorphism $\iota$ from the $B_n$ Weyl
group with generators $s_1,\dots, s_n$ to $S_{\{\pm 1, \dots , \pm n
\}}$, which sends $s_n$ to $(-1,1)$ and $s_i$ to the ``signed
transposition'' $(n-i, n-i+1) (-(n-i), -(n-i+1))$.  This map is
bijective onto the set of $\pi \in S_{\{\pm 1, \dots , \pm n \}}$
such that $\pi(i)=-\pi(-i)$, called {\it signed permutations}. The
Bruhat order on $B_n$ agrees with the order on signed
permutations inherited from type $A_{2n-1}$ Bruhat order.

The embedding $\iota: B_n \to S_{\{\pm 1, \dots , \pm n \}}$ allows
us to identify a type $(B_n, n)$ $\oplus$-diagram $D$ of shape $O_w$
with the type $(A_{2n-1},n)$ $\oplus$-diagram $\iota(D)$ of shape
$O_{\iota(w)}$ obtained by reflecting $D$ over the diagonal $y=x$.
The following observation is clear from the definitions.

\begin{equation}\label{reflectPDS}
\text{If $\v(\iota(D))$ is a PDS of $\iota(w)$ then $\v(D)$ is a PDS
of $w$.}
\end{equation}

\begin{proof}[Proof of Theorems \ref{thm:LeB} and \ref{thm:LeBcomplete}.]
Let $D$ be an $\oplus$-diagram.  If $D$ violates condition (1) of
Theorem~\ref{thm:LeB} then a rectangular $\Le$-move can be performed
on it, as in the proof of Theorem~\ref{thm:LeA}. Otherwise, suppose
$D$ violates condition (2) of Theorem~\ref{thm:LeB}.

Let $y$ be the diagonal box containing the $0$ violating condition
(2) closest to the bottom left and let $z$ be the box in the same
row as $y$ containing a $+$ and closest to $y$.  Let $x$ be the
diagonal box in the same column as $z$.  We claim that $D|_{(x,y)} =
S_1$.  Using the fact that $D$ satisfies condition (1) of Theorem
\ref{thm:LeB} we deduce that $D|_{(x,y)}$ contains only $0$'s along
the diagonal.  Using the assumption that $y$ was chosen closest to
the bottom left we then deduce that $D_{(x,y)} = S_1$.
$$
\tableau[sbY]{z&0&0&0&y \\ {}&{}&{}&{} \\ {}&{}&{} \\ {}&{} \\ x}
$$
This shows that $(x,y,S_1)$ can be performed on $D$.  Thus after a
finite sequence of the moves $(x,y,S_0)$ and $(x,y,S_1)$, the
$\oplus$-diagram $D$ can be made to satisfy the $\Le$-conditions. In
particular, a $\Le$-diagram must satisfy the $\Le$-conditions.

Conversely, suppose an $\oplus$-diagram $D$ satisfies the
$\Le$-conditions of Theorem \ref{thm:LeB}.  A comparison of the
$\Le$-conditions of Theorems \ref{thm:LeA} and \ref{thm:LeB} implies
that $\iota(D)$ (obtained by reflecting $D$ in the diagonal) is a
type $(A_{2n-1}, n)$ $\Le$-diagram. Thus $\v(\iota(D))$
is a PDS, hence by \eqref{reflectPDS}, $\v(D)$ is a PDS. Therefore
$D$ is a type $(B_n,n)$ $\Le$-diagram.
\end{proof}

\section{Type $(B_n,1)$}\label{sec:typeB}
Now let $(W,j) = (B_n,1)$ so that $O_w \subset Q^j$ can be
identified with a single row.  We call the box labeled $n$ (if
contained in $O_w$) the {\it middle} box, and any two boxes with the
same simple label {\it conjugate}.  The conjugate of the middle box
is itself.

\begin{theorem}\label{thm:LeBB}
A type $(B_n,1)$ $\Le$-diagram is an $\oplus$-diagram $D$ of shape
$O_w$ such that if there is a $0$ to the right of the middle box,
then the box $b$ immediately to the left of this $0$ and the
conjugate $b'$ to $b$ cannot both contain $+$'s.
\end{theorem}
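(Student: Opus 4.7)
The plan is to mirror the proofs of Theorems \ref{thm:LeA} and \ref{thm:LeB}: introduce an explicit family of Le-moves that remove any violation of the stated condition, verify these are Le-moves via Remark \ref{rem:Le}, establish completeness, and conclude by enumeration.

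First I would define \emph{conjugate Le-moves}. For each pair $x, y \in O_w$ sharing a common simple label $k$ with $k < n$---meaning $x$ is at position $k$ left of the middle and $y$ at position $2n-k$ right of the middle---let $S$ be the filling of the interior $(x,y)$ that places $+$ at the two positions of simple label $k+1$ (these are $b' = k+1$ and $b = y-1 = 2n-k-1$, both contained in $(x,y)$) and $?$ at every other interior position. When $k = n-1$, the interior reduces to the single middle box $n$, which is filled with $+$. To verify $(x,y,S)$ is a Le-move, apply Remark \ref{rem:Le}: every $D$ compatible with $S$ omits both generators $s_{k+1}$ in the interior, forcing $v(D)$ into the parabolic subgroup $\langle s_{k+2}, \ldots, s_n\rangle$ (trivially $\{1\}$ when $k = n-1$). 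For $k \leq n-2$, every index $i \in \{k+2,\ldots,n\}$ satisfies $|i-k| \geq 2$, so $s_i$ commutes with $s_k$; hence $v(D)$ commutes with $s_k$ and fixes $\alpha_k$. For $k = n-1$ the conclusion is immediate. Either way $v(D)^{-1}\cdot \alpha_k = \alpha_k$ as required.

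Second, I would show completeness. Given a diagram $D$ violating the Le-condition, pick a $0$ in position $y > n$ such that $b = y-1$ and its conjugate $b'$ are both $+$ (or the middle box is $+$, when $y = n+1$); the two boxes $b, b'$ are precisely the label-$(k+1)$ positions in $(x,y)$, so $D|_{(x,y)}$ matches $S$ automatically. Applying the corresponding Le-move converts $D(y)$ from $0$ to $+$, strictly decreasing the number of $0$'s right of the middle; iteration terminates at a diagram satisfying the Le-condition. Combined with Proposition \ref{prop:greedyLe}, this proves every $\Le$-diagram satisfies the stated condition.

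The main obstacle is the converse direction---showing that every $\oplus$-diagram satisfying the Le-condition is itself a $\Le$-diagram. By Proposition \ref{prop:cells}, this reduces to showing the count of diagrams satisfying the Le-condition on $O_w$ equals $|\{x \in W : x \leq w\}|$ in Bruhat order. Since $Q^j$ is a chain and the Le-condition is local---each right-of-middle $0$ at position $n+j$ constrains only the two boxes at positions $n+j-1$ and $n-j+1$---the enumeration is amenable to a recursion on the length of $O_w$, which I would match against the corresponding Bruhat-interval recursion in the minimal coset representatives of $B_n / W_{I \setminus \{1\}}$. The reflection symmetry through the middle box, captured by the conjugate pairing of boxes used in the Le-moves, is what distinguishes the $(B_n,1)$ case from type $A$ and makes the matching nontrivial.
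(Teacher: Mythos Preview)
Your forward direction is correct, and the conjugate $\Le$-moves you describe are exactly the moves $(x,y,S_0)$ that the paper records immediately after Theorem~\ref{thm:LeBB}; your verification via Remark~\ref{rem:Le} (that $v(D)$ lies in $\langle s_{k+2},\dots,s_n\rangle$ and hence fixes $\alpha_k$) is clean and matches the paper's claim that the proposition is ``immediate''.

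The converse, however, is where you diverge substantially from the paper and where your plan is weakest. You propose to close the argument by enumeration, matching a recursion for $\Le$-condition diagrams against Bruhat-interval sizes in $B_n$. This is not how the paper proceeds, and it is considerably harder than necessary. The paper's proof is a two-line observation: any two subexpressions of $s_1 s_2\cdots s_n\cdots s_2 s_1$ representing the same group element are related using only commutations $s_is_j=s_js_i$ and cancellations $s_i^2=1$ --- no braid relation is ever required. This is because each $s_i$ with $i<n$ appears exactly twice, and the two copies can interact only after the (unique or paired) copies of $s_{i+1}$ between them have been removed; there is never an opportunity for a length-three braid. Once you know that only commutation and cancellation are in play, the PDS condition at each position can be read off directly, and it is precisely your $\Le$-condition.

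So your plan is not wrong, but you have identified as the ``main obstacle'' something the paper dispatches in one sentence by exploiting the absence of braid relations. Your $\Le$-move machinery mirrors the harder cases $(B_n,n)$ and $(D_n,n)$; the point of the paper's proof is that $(B_n,1)$ is simple enough not to need it. If you want to finish your route without enumeration, note that your completeness argument already shows the map $D\mapsto v(D)$ from $\Le$-condition diagrams to $\{x\le w\}$ is surjective; you would then need injectivity, which again is exactly the ``no braid relation'' observation.
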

\begin{proof}
Suppose $D$ and $D'$ are two $\oplus$-diagrams of shape $O_w$ so
that $v(D)= v(D')$.  Then the words corresponding to $\v(D)$ and
$\v(D')$ are related by relations of the form $s_i s_j = s_j s_i$
and $s_i^2 = 1$; that is, no braid relation is required.  This
readily implies the description stated.
\end{proof}

Let $x < y$ be a pair of conjugate boxes in $O_w$.  Let $S_0$ denote
the following filling of $(x,y)$:
$$
\tableau[sbY]{ {x}&{+}&{?}&{?}&{?}&{?}&?&+&y}
$$

The following claim is immediate.

\begin{proposition}
The triples $(x,y,S_0)$ defined above are $\Le$-moves.
\end{proposition}

\begin{theorem}
The $\Le$-moves $(x,y,S_0)$ form a complete system of $\Le$-moves.
\end{theorem}

\section{Type $(D_n,n)$}
Now let $(W,j) = (D_n,n)$ so that $O_w \subset Q^j$ can be
identified with a shape contained inside a staircase.  We refer to
the $n$ boxes along the diagonal of $Q^j$ as the diagonal boxes. The
distance of a box $b$ from the diagonal is the number of boxes that
$b$ is on top of, so that a diagonal box has distance $0$ from the
diagonal.

In the following we will say that a box $b$ is to the left or right
(above or below) another $b'$ if and only if they are also in the
same row (column).  We will use compass directions when the same row
or column condition is not intended.

\begin{theorem}\label{thm:LeD}
A type $(D_n,n)$ $\Le$-diagram is an $\oplus$-diagram $D$ of shape
$O_w$ such that
\begin{enumerate}
\item if there is a $0$ above a $+$ then
all boxes to the left of that $0$ must also be $0$'s.
\item
if there is a $0$ with distance $d$ from the diagonal to the right
of a $+$ in box $b$ then there is no $+$ strictly southwest of $b$
and $d + 1$ rows south of the $0$.
\item
one cannot find a box $c$ containing a $0$ and three distinct boxes
$b_1,b_2,b_3$ containing $+$'s so that $c$ has distance $d$ from the
diagonal and is to the right of $b_1$, the box $b_2$ is the box
$d+1$ rows below $b_1$, and finally $b_3$ is strictly northwest of
$b_2$ and strictly south of $b_1$.
\end{enumerate}
\end{theorem}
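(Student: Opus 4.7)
The plan is to follow the two-step scheme that succeeded for types $(A_{n-1},j)$ and $(B_n,n)$: first exhibit a collection of local $\Le$-moves, one tailored to each forbidden pattern in (1)--(3), so that any $\oplus$-diagram violating one of the conditions admits such a move and hence (by Proposition~\ref{prop:greedyLe} applied in reverse) cannot already be a $\Le$-diagram; then prove the converse via a wiring/network argument in the spirit of \cite[Theorem 19.1]{Postnikov}. This establishes both Theorem~\ref{thm:LeD} and a complete system of $\Le$-moves simultaneously, exactly as in the analogous result for type $(B_n,n)$.

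For condition (1), I would reuse the rectangular move $(x,y,S_0)$ of Proposition~\ref{prop:LeMoveA}; away from the fork of $Q^j$ the local root data in $D_n$ is indistinguishable from type A, so the computation of $v(D)^{-1}\cdot\beta=\alpha$ in Remark~\ref{rem:Le} goes through verbatim. For condition (2), I would introduce a move whose interval $[x,y]$ crosses the diagonal region described in the hypothesis, with $S$ forced to be $0$'s along the critical column together with the $+$ in box $b$, and the remaining entries marked $?$; the verification of (\ref{eq:LeCheck}) is a direct root calculation analogous to that of Proposition~\ref{prop:LeMoveB2}, carried out for the specific pair of simple roots supplied by the two endpoints. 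For condition (3) I would introduce a move in which the filling $S$ pins down $+$'s at the three locations $b_1,b_2,b_3$ and leaves the rest as $?$'s; the root identity $v(D)^{-1}\cdot\beta=\alpha$ then reduces to the single nontrivial braid relation of $D_n$ at the trivalent node. In each case I would check, as in the proofs of Theorems~\ref{thm:LeA} and \ref{thm:LeB}, that whenever $D$ violates the corresponding numerical condition one can actually locate $x$, $y$, and the witnessing $+$'s so that $D$ is compatible with the specified $S$, giving a move that can be performed on $D$.

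For the converse, I would encode each $\oplus$-diagram as a type $D$ pipe dream: place a crossing at each $+$ and an elbow at each $0$, using the standard wiring conventions at the fork. Reading the strands gives the Weyl group element $v(D)$, and the row-reading reduced expression for $w$ used in the verification of the greedy PDS criterion \eqref{eq:greedy} corresponds to traversing these pipes left-to-right, bottom-to-top. Conditions (1)--(3) are precisely what forbid the three types of reversals (two-strand in a rectangle, strand crossing the fork once, and strand crossing the fork twice with an extra interaction) that would cause $v_{(j-1)}s_{i_j}<v_{(j-1)}$ and thereby spoil the PDS property; combined with the first half, this forces the $\Le$-diagrams to be exactly the $\oplus$-diagrams satisfying (1)--(3). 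The main obstacle will be condition (3): since the forbidden pattern involves three $+$'s spread across both branches of the Dynkin diagram, setting up the move so that the compatibility class of $S$ covers every actual violation, while keeping $S$ tight enough for \eqref{eq:LeCheck} to hold, and checking the corresponding braid computation without invoking ad hoc case analysis, is the genuinely new ingredient beyond what type $(B_n,n)$ required.
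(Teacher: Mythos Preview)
Your overall two-step strategy matches the paper's, and the rectangular move for condition~(1) is exactly right. But the $\Le$-moves you propose for conditions~(2) and~(3) will not work as stated, and the wiring argument for the converse is missing a crucial ingredient.

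For the moves: you suggest leaving ``the remaining entries marked $?$'' in $S$, pinning down only the $+$'s witnessing the forbidden pattern. But equation~\eqref{eq:LeCheck} requires $v(D)^{-1}\cdot\beta=\alpha$ for \emph{every} $\oplus$-diagram $D$ compatible with $S$; distinct choices of the $?$'s give distinct elements $v(D)$, and these cannot all send $\beta$ to the same $\alpha$. The paper's moves $S_1$ and $S_2$ (Propositions~\ref{prop:LeMoveD2} and~\ref{prop:LeMoveD3}) are therefore filled almost entirely with $0$'s, with $?$'s only in a small region where the root calculation is insensitive to the filling. The price is that these tight $S$'s do not obviously match an arbitrary violation of~(2) or~(3); the paper closes this gap by choosing $y$ to be the \emph{southwesternmost} violating $0$ and then arguing, using condition~(1) and the minimality of $y$, that the surrounding boxes are forced to be $0$, so $D|_{(x,y)}$ is in fact compatible with the tight $S_1$ or $S_2$. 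You acknowledge this tension at the end, but your concrete proposal lands on the wrong side of it.

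For the converse: first, your convention is inverted---in this paper a box with $0$ contributes $s_b$ to $\v(D)$, so $0$'s become crosses and $+$'s become elbows, not the other way around. More substantively, there is no intrinsic ``type $D$ pipe dream'' here: the paper uses the explicit embedding $\delta:D_n\hookrightarrow S_{\{\pm1,\dots,\pm n\}}$ (which does \emph{not} preserve Bruhat order) to produce a type $(A_{2n-1},n)$ wiring diagram, and the generator $s_n$ is encoded by a $2\times2$ block of identical boxes. The PDS condition then has a special clause at these $2\times2$ blocks (two wires may touch there provided the effect of flipping the block is not the same as flipping the earlier crossing), and it is precisely this clause that makes conditions~(2) and~(3) the right obstructions. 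Without the $2\times2$ bookkeeping your ``three types of reversals'' description does not match what actually happens at the fork.
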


An $\oplus$-diagram $D$ satisfying the conditions of Theorem
\ref{thm:LeD} is said to satisfy the $\Le$-conditions.

We now provide a complete set of $\Le$-moves. Let $x < y$ be two
distinct, comparable boxes in $O_w$ such that $[x,y]$ is a
rectangle.  Denote by $S_{0}$ the filling of $(x,y)$ as in
(\ref{eq:rectangular}).  The following result is proved in the same
manner as Proposition \ref{prop:LeMoveA}.

\begin{proposition}\label{prop:LeMoveD1}
The triples $(x,y,S_0)$ defined above are $\Le$-moves.
\end{proposition}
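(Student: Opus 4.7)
My plan is to apply Remark \ref{rem:Le}, which reduces the claim that $(x,y,S_0)$ is a $\Le$-move to verifying the single identity $v(S_0)^{-1} \cdot \beta = \alpha$, where $\alpha$ and $\beta$ are the simple roots of $x$ and $y$. Only the filling $D = S_0$ must be checked because $S_0$ contains no $?$'s.

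The key geometric observation is that within any rectangle $[x,y]$ in the $(D_n,n)$ poset with $r,c \geq 2$, the labels at all boxes other than the bottom-right corner $(y_1,x_2)$ agree with the ``type $A$'' formula $\mathrm{label}(i,j) = s_{i+(n-1-j)}$; only that corner can possibly carry the anomalous prong label $s_n$ in place of $s_{n-1}$. Indeed, if an antidiagonal box $(k,k)$ lies in $[x,y]$, the constraints $x_1 \leq k \leq y_1$ and $x_2 \leq k \leq y_2$, combined with the rectangle-validity condition $y_1 \leq x_2$, pinch down to $k = y_1 = x_2$, forcing $(k,k)$ to coincide with the bottom-right corner.

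Since $S_0$ places a $+$ at this corner, its generator is omitted from the subexpression $\v(S_0)$. Every remaining generator entering $\v(S_0)$ therefore has index at most $n-2$ (by the type $A$ formula, since the box is off-antidiagonal), and the collection $\{s_1,\dots,s_{n-2}\}$ generates a type $A_{n-2}$ parabolic subsystem whose braid and commutation relations agree with those of the symmetric group. Using the row-reading order exactly as in the proof of Proposition~\ref{prop:LeMoveA}, the expansion and telescoping of $v(S_0)^{-1} \cdot \beta$ are then formally identical to the ones given there, producing $\alpha_r = \alpha$.

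The main obstacle is the geometric pinching argument in the second paragraph. Once it is in place, the algebraic computation is a verbatim transcription of the proof of Proposition~\ref{prop:LeMoveA}, since no braid relation outside the type $A_{n-2}$ subsystem is ever invoked.
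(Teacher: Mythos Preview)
Your proposal is correct and follows the same approach as the paper, which simply records that the result is ``proved in the same manner as Proposition~\ref{prop:LeMoveA}.'' Your pinching argument makes explicit what the paper leaves implicit: since the only box of $[x,y]$ that can lie on the staircase diagonal is the bottom-right corner, and $S_0$ places a $+$ there, every simple generator appearing in $\v(S_0)$ together with the endpoint roots $\alpha,\beta$ lives in the parabolic $A_{n-2}$ subsystem generated by $s_1,\dots,s_{n-2}$, so the root computation of Proposition~\ref{prop:LeMoveA} transfers verbatim.
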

We will call the $\Le$-moves $(x,y,S_0)$ the {\it rectangular}
$\Le$-moves.

Now let $x < y$ be two distinct boxes so that $x$ is $c$ columns
west of $y$ and $r$ rows south.  Let $y$ be distance $d$ from the
diagonal.  We suppose that $r > d+ 1$ and set $k = r - (d+1)$.
Denote by $S_1$ the following $\{0,+,?\}$-filling of $(x,y)$:
$$
\tableau[sbY]{ ?&?&+&0&0&y \\?&?&0&0&0&0 \\?&?&0&0&0&0 \\+&0&?&0&0 \\ 0&0&0&0\\  0&0&0 \\
x&0}
$$
where
\begin{enumerate}
\item
the $+$ in the row of $y$ is $k$ boxes to the left of $y$,
\item
the $+$ in the column of $x$ is $k$ boxes above $x$.  Our
assumptions imply that this $+$ is southwest of the first $+$ and is
$d+1$ rows south,
\item
the box below the first $+$ and to the right of the second $+$ is a
$?$, and
\item
the remaining boxes are filled with $0$'s except for the boxes both
west of the first $+$ and north of the second $+$.
\end{enumerate}

\begin{proposition}\label{prop:LeMoveD2}
The triples $(x,y,S_1)$ defined above are $\Le$-moves.
\end{proposition}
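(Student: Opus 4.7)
The plan is to apply the criterion of Remark \ref{rem:Le}: it suffices to show that $v(D)^{-1}\cdot\beta = \alpha$ for every $\oplus$-diagram $D$ of shape $(x,y)$ compatible with $S_1$, where $\alpha$ (resp.\ $\beta$) is the simple root labeling the box $x$ (resp.\ $y$). I will follow the template of Propositions \ref{prop:LeMoveA} and \ref{prop:LeMoveB2}: fix a convenient reading order and compute the action of $v(D)^{-1}$ on $\beta$ step by step, tracking the intermediate roots.

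I would fix the row reading order (each row read left-to-right, from the bottom row up). The interval $(x,y)$ then splits into three regions that can be analyzed in turn. Region (i) consists of the rows strictly below the row of the second $+$ (the one in the column of $x$); since the labels there come from the linear $A$-type segment of the Dynkin diagram of $D_n$, the computation in this region is formally identical to the rectangular case already handled in Proposition \ref{prop:LeMoveA} and converts $\beta$ to a sum $\alpha_{ij}$ of consecutive simple roots. Region (ii) is the row of the second $+$ and the rectangular block directly east of it; here the decisive step happens, and the two distinguished $+$-boxes — separated by exactly $d+1$ rows — are positioned so that a fork-type cancellation at the branch node of the $D_n$ Dynkin diagram converts the intermediate root into $\alpha$. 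Region (iii) is the upper strip containing the row of $y$ together with the $?$-region to the northwest of the second $+$.

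The main obstacle is handling the $?$-boxes, since there the result must be independent of how each $?$ is specialized. For the $?$-region strictly northwest of the second $+$, I expect to argue by commutation: the simple labels appearing there all lie in a segment of the Dynkin diagram disjoint from the support of the intermediate root at the moment they are encountered, so by Lemma \ref{lem:easycommute} (read through its underlying commutation relations) their generators act trivially, independently of whether each $?$ is a $0$ or a $+$. For the lone $?$ directly below the first $+$ and east of the second $+$ — the most delicate one — I anticipate a direct check showing that at the moment this box is processed, the intermediate root has coefficient $0$ on the simple root labeling that box, so inserting or omitting the corresponding generator yields the same image.

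Putting these three regions together and invoking the $?$-independence, we obtain $v(D)^{-1}\cdot\beta = \alpha$, which by Remark \ref{rem:Le} proves that $(x,y,S_1)$ is a $\Le$-move. I expect the main technical nuisance to be the explicit bookkeeping around the forked node of $D_n$, where the staircase labeling in Figure \ref{fig:posets} departs from the naive $A$-type pattern; to keep this clean I would induct on the parameter $k = r-(d+1)$, reducing to a small base case around the branch node that can be verified by a direct root-system computation analogous to the closing calculation in Proposition \ref{prop:LeMoveB2}.
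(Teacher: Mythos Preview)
Your overall strategy---invoke Remark~\ref{rem:Le}, fix the row reading order, and track the action of $v(D)^{-1}$ on $\beta$---is exactly what the paper does. The paper's execution is leaner, though: it carries out a single explicit row-by-row calculation (after normalizing via the $D_n$ automorphism swapping $s_{n-1}$ and $s_n$), writing $\overline{s_j}$ for generators coming from $?$-boxes and observing at each step that these fix the intermediate root. There is no induction on $k$ and no separate region decomposition.

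Two points in your plan need adjustment. First, your region (i)---the rows strictly below the second $+$---is \emph{not} in the linear $A$-segment: every such row terminates at a diagonal box of $Q^j$, labeled alternately $s_n$ or $s_{n-1}$, so the fork of the $D_n$ diagram is met in each row and the calculation there is not a rerun of Proposition~\ref{prop:LeMoveA}. In the paper the bulk of the nontrivial root-system bookkeeping in fact happens across these rows, where one sees sums like $\alpha_{d+2,n}+\alpha_{c+d+2-k,n-2}$ gradually collapse to $\alpha_r$. Second, for the lone central $?$-box the correct mechanism is that the \emph{pairing} $\langle\gamma,\alpha_{c-k+d+1}^{\vee}\rangle$ vanishes at the moment that box is processed, not that the coefficient of $\alpha_{c-k+d+1}$ in $\gamma$ is zero (it is in fact $1$). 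The northwest $?$-block is handled by the same pairing-vanishes observation, not via Lemma~\ref{lem:easycommute}: the gap of width $2$ created by omitting $s_{c-k}$ in the top row keeps the support of the intermediate root disjoint from the $?$-labels at each subsequent step.
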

\begin{proof}
We follow the same general strategy as in the proof of Proposition
\ref{prop:LeMoveA}, again using the row reading order.  Let us
assume that the top-left corner box of $(x,y)$ is labeled $0$ (for
readability) and that the diagonal box below $y$ is labeled by $n$
rather than $n-1$.  We lose no generality here since there is an
automorphism of the $D_n$ Weyl group swapping $s_n$ and $s_{n-1}$
and fixing all other generators.  Our assumptions give the picture:
$$
%\setcellsize{25}
\tableau[mbY]{0&&&{c-k}&&&c \\1&&{}&{}&{}&{}&{} \\{}&&{}&{}&{}&n-2&n \\\,d+1&&{}&{?}&{}&n-1 \\ {}&&{}&{}&n\\  {}&&{}&{} \\
r&&{m}}
$$
where the label $m$ of the diagonal box to the right of $x$ depends
on the parity of $k$, and the central $?$ is labeled $c-k+d+1$.  Let
$m^*$ denote $n$ if $m = n -1$ and vice versa.  Note also that $n =
c + d + 1$.

\def\s{S}

In the following we use the notation $\alpha_{ij} = \alpha_i +
\cdots + \alpha_j$ (with $\alpha_{i+1,i} = 0$), the notation $S_a^b
= s_{a} s_{a+1} \cdots s_b$ and also $\overline{s_j}$ to indicate a
simple generator which may or may not be present.  We assume
$k \geq 2$;  otherwise the calculation is even simpler.
\begin{align*}
&v(D)^{-1} s_{\alpha_{c}} \\
&=(\s_{r+1}^{n-2}s_m)(\s_{r-1}^{n-2}s_{m^*})\cdots (\s_{d+2}^{n-2}
s_n)(\s_{d+2}^{c-k+d} \overline{s_{c-k+d+1}}
\s_{c-k+d+2}^{n-2}s_{n-1})\\
&\hspace{25pt}(\overline{\s_{d}^{c-k+d-1}} \s_{c-k+d}^{n-2}s_n)
\cdots (\overline{\s_{1}^{c-k}}\s_{c-k+1}^{c+1})
(\overline{\s_{0}^{c-k-1}} \s_{c-k+1}^{c-1}) \alpha_c \\
& = \cdots (\overline{\s_{1}^{c-k}}\s_{c-k+1}^{c+1})
\alpha_{c-k+1,c}
\\
& = \cdots (\overline{\s_{d}^{c-k+d-1}} \s_{c-k+d}^{n-2}s_n) \cdots
\alpha_{c-k+2,c+1} \\
& = \cdots (\overline{\s_{d}^{c-k+d-1}} \s_{c-k+d}^{n-2}s_n)
\alpha_{c+d+1-k,n-2} \\
& = \cdots (\s_{d+2}^{c-k+d} \overline{s_{c-k+d+1}}
\s_{c-k+d+2}^{n-2}s_{n-1}) \alpha_{c+d+1-k,n-2} + \alpha_n \\
& = \cdots (\s_{d+2}^{n-2}
s_n) \alpha_{d+2,n} + \alpha_{c+d+2-k,n-2} \\
& = \cdots (\s_{r-1}^{n-2}s_{m^*}) \cdots \alpha_{d+3,n} + \alpha_{c+d+3-k,n-2} \\
& = \cdots (\s_{r-1}^{n-2}s_{m^*}) \alpha_{r-1,n} \ \ \ \mbox{since
$c-k + r - 1
=n-1$ we have $\alpha_{c-k+r-1,n-2} = 0$} \\
& = (\s_{r+1}^{n-2}s_m) \alpha_{r,n-2} + \alpha_m \\
& = \alpha_r.
\end{align*}
This proves that the triples $(x,y,S_1)$ are indeed $\Le$-moves.
\end{proof}
Now we define a third kind of $\Le$-move $(x,y,S_2)$.  We keep the
same assumptions and notation for $x$ and $y$ as for $(x,y,S_1)$.
However, now given $x$ and $y$ there is more than one choice for
$S_2$.  Denote by $S_2$ (one of) the following $\{0,+,?\}$-fillings
of $(x,y)$:
$$
%\setcellsize{13}
\tableaux{ ?&?&?&?&+&0&0&y \\?&?&?&?&0&0&0&0 \\ +&0&0&0&+&0&0&0 \\
0&0&0&0&?&0&0&0 \\ 0&0&0&0&+&0&0 \\ 0&0&0&0&0&0 \\ 0&0&0&0&0\\
x&0&0&0}
$$
where
\begin{enumerate}
\item
the $+$ (called $z_1$) in the row of $y$ is $k$ boxes to the left of
$y$,
\item
the lower $+$ (called $z_2$) below $z_1$ is $k$ rows north of $x$ or
alternatively $d+1$ rows south of $y$,
\item
the remaining two $+$'s are chosen on the same but {\it any} row
strictly south of $z_1$ and north of $z_2$: one of these (called $z_4$)
is in the
same column as $z_1$ and $z_2$ while the other (called $z_3$) is in
the same column as $x$, and
\item
the remaining boxes are filled with $0$'s except for:
the boxes
which are strictly  west of $z_1$ and strictly north of $z_3$;
and the boxes between (and in the same column as) $z_2$ and $z_4$.
\end{enumerate}

The following result is proved in the same manner as Proposition
\ref{prop:LeMoveD2}.  In fact the half of the calculation below
$z_2$ is identical.
\begin{proposition}\label{prop:LeMoveD3}
The triples $(x,y,S_2)$ defined above are $\Le$-moves.
\end{proposition}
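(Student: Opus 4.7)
The plan is to verify the criterion of Remark~\ref{rem:Le}: namely that $v(D)^{-1}\cdot\beta=\alpha$ for every specialization $D$ of $S_2$, where $\alpha$ and $\beta$ are the simple roots labelling the boxes $x$ and $y$. Following the strategy of the proof of Proposition~\ref{prop:LeMoveD2}, I would fix the row reading order (reading each row left to right, starting from the bottom row and working upward), and then compute $v(D)^{-1}\cdot\alpha_c$ by processing one row at a time. The simple generators corresponding to $?$-boxes will be tracked with the $\overline{s_j}$ notation introduced in the proof of Proposition~\ref{prop:LeMoveD2}, so that a single calculation handles every specialization of $S_2$ at once.

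The key structural observation is that the sub-diagram of $S_2$ lying strictly below the row of $z_2$ consists entirely of $0$'s, exactly matching the corresponding region of $S_1$ in Proposition~\ref{prop:LeMoveD2}. Consequently, the block of factors in $v(D)^{-1}$ coming from the rows strictly below $z_2$ acts on $\alpha_c$ to produce precisely the same intermediate root that arose at the analogous stage in the calculation of Proposition~\ref{prop:LeMoveD2}. This is the content of the remark that ``the half of the calculation below $z_2$ is identical,'' and it allows me to import that portion of the earlier telescoping verbatim.

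What remains is to carry the calculation through the upper portion of $S_2$: the row of $z_2$, the $?$-strip in the column of $z_2$ lying between $z_2$ and $z_4$, the row containing $z_3$ and $z_4$, the $?$-region strictly northwest of $z_3$, and finally the row of $z_1$ and $y$. A row-by-row telescoping identical in spirit to that of Proposition~\ref{prop:LeMoveD2} should then deliver $\alpha_r=\alpha$. The main obstacle will be bookkeeping: one must check that the optional generators indicated by $\overline{s_j}$, both inside the $?$-strip in the column of $z_2$ and inside the $?$-region above $z_3$, really drop out of the computation regardless of specialization, and that the presence of the pair $z_3, z_4$ in a single common row does not disturb the telescoping but instead produces exactly the cancellation needed to advance the root index by the correct amount. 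As in the proof of Proposition~\ref{prop:LeMoveD2}, the parity issue distinguishing $s_n$ from $s_{n-1}$ can be neutralised using the $D_n$ diagram automorphism that swaps these two generators, so that a single configuration-specific calculation suffices.
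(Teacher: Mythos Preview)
Your proposal is correct and matches the paper's approach exactly: the paper's own proof consists solely of the remark that the argument is the same as for Proposition~\ref{prop:LeMoveD2} and that the half of the calculation below $z_2$ is identical, so your outline is in fact more detailed than what the paper supplies.
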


\begin{theorem}\label{thm:LeDcomplete}
The $\Le$-moves $(x,y,S_0)$, $(x,y,S_1)$ and $(x,y,S_2)$ form a
complete system of $\Le$-moves.
\end{theorem}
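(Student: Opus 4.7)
The plan is to prove Theorems \ref{thm:LeD} and \ref{thm:LeDcomplete} in tandem, following the strategy used for Theorems \ref{thm:LeB} and \ref{thm:LeBcomplete}. The two things to establish are: (A) any $\oplus$-diagram violating one of the three $\Le$-conditions of Theorem \ref{thm:LeD} admits at least one of the moves $(x,y,S_0)$, $(x,y,S_1)$, or $(x,y,S_2)$; and (B) any $\oplus$-diagram satisfying all three $\Le$-conditions is actually a $\Le$-diagram. Combined with Proposition \ref{prop:greedyLe}, (A) forces every $\Le$-diagram to satisfy the $\Le$-conditions, while (B) gives the converse; moreover, exhaustion of the violations in (A) is precisely what the completeness statement of Theorem \ref{thm:LeDcomplete} requires.

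For part (A) I would process the three violations in order of priority. If $D$ violates condition (1), pick a southwestmost offending $0$-box $y$; following the argument in the proof of Theorem \ref{thm:LeA}, the rectangle $[x,y]$ whose remaining corners are the nearest $+$'s is forced to match $S_0$ by extremality, and the rectangular move applies. If $D$ satisfies (1) but violates (2), let $y$ be the southwestmost witnessing $0$ at distance $d$, let $z_1$ be the nearest $+$ to its left in its row, let $z_2$ be the $+$ that is $d+1$ rows south of $y$ and strictly west of $z_1$, and take $x$ to be the box in $z_2$'s column and $k$ rows further south; condition (1) applied inside $(x,y)$ together with the extremality of $y$ constrains $D|_{(x,y)}$ to specialize $S_1$, so the $S_1$-move applies. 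If $D$ satisfies (1) and (2) but violates (3), a similar but more intricate extremal choice identifies the four $+$'s $z_1, z_2, z_3, z_4$ appearing in $S_2$, and minimality combined with (1) and (2) pins the non-$?$ entries of $D|_{(x,y)}$ down to a valid specialization of $S_2$, yielding an $S_2$-move.

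For part (B), the natural plan is a wiring-diagram argument as in \cite[Theorem 19.1]{Postnikov}, extended to type $D$. Given $D$ satisfying the $\Le$-conditions, fix the row reading order to form $\v(D)$ and verify directly that $\v(D)$ is a PDS by checking \eqref{e:PositiveSubexpression} at each position. Using the realization of the $D_n$ Weyl group as even signed permutations, each $+$ of $D$ can be assigned to a wire, and the three $\Le$-conditions translate into combinatorial prohibitions on wire configurations that would force a length decrease: condition (1) rules out the elbow-below-crossing obstruction familiar from type $A$, while conditions (2) and (3) encode the new long-range obstructions produced by the fork at $s_{n-1}, s_n$ in the $D_n$ Dynkin diagram.

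The main obstacle is this wiring analysis for part (B): conditions (2) and (3) involve configurations spanning $d+1$ rows and two distinct columns, have no analogue in types $A$ or $B$, and require one to track the pair of ``$(n{-}1)$-wire'' and ``$n$-wire'' trajectories simultaneously around the fork of the Dynkin diagram. A secondary but nontrivial subtlety in part (A) is verifying that the extremal choice in case (3) forces exactly one of the valid specializations of $S_2$, which requires case-splitting on possible coincidences among the ``strictly southwest'' boxes $b_3$ guaranteed by the violation.
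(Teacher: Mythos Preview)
Your plan is essentially the paper's proof: part (B) is carried out via the embedding of $D_n$ into even signed permutations and the resulting wiring diagram in an $n\times n$ square (with each diagonal box replaced by a $2\times 2$ block), and part (A) is the extremality argument you describe. One small divergence to be aware of: in part (A) the paper does not globally separate the cases ``$D$ violates (2)'' versus ``$D$ satisfies (2) but violates (3)''; instead it chooses the southwest-most $y$ violating (2) \emph{or} (3) and only then asks whether that particular $y$ participates in a pattern of type (2) --- this local formulation is what makes the extremality argument go through cleanly when forcing the remaining $0$'s in $S_1$ and $S_2$.
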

\begin{proof}[Proof of Theorems \ref{thm:LeD} and
\ref{thm:LeDcomplete}] We first show that the $\oplus$-diagrams
satisfying the $\Le$-conditions correspond to PDS's.  It is well
known \cite{BB} that $D_n$ Weyl group elements can be identified as
signed permutations on the $2n$ letters $\{\pm 1, \pm 2, \ldots, \pm
n\}$ which are {\it even}: that is, have an even number of signs
in positions $1$ through $n$.
This is achieved by the map $\delta$ which sends $s_n \mapsto
(1,-2)(2,-1)$ and $s_i \mapsto (n-i, n-i+1)(i-n,i-n-1)$ for $1 \leq
i \leq n-1$. Note that $\delta$ does {\it not} preserve Bruhat
order.

Using $\delta$, we obtain a type $(A_{2n-1},n)$
$\oplus$-diagram from a type $(D_n, n)$ $\oplus$-diagram $D$.  The
type $(A_{2n-1},n)$ $\oplus$-diagram can be converted to a wiring
diagram $\wire(D)$ in a $n \times n$ square ($+$'s become elbows and
$0$'s become crosses).  For example:

$$
\tableau[sbY]{+&0&+&+\\+&+&0\\0&0\\+\\ \bl \\ \bl } \ \ \
\raise50pt\hbox{$\longrightarrow$} \ \ \
\tableau[sbY]{+&0&+&+&+\\+&+&0&0&+\\0&0&0&0&+\\+&+&0&+&0\\ +&+&0&+&+ \\
\bl } \ \ \ \raise50pt\hbox{$\longrightarrow$} \ \ \
\tableau[sbY]{\bl&\bl 5&\bl 4&\bl 3&\bl 2& \bl 1\\
\bl 5&\boxelbow&\boxcross&\boxelbow&\boxelbow&\boxelbow&\bl\,-1\\
\bl 4&\boxelbow&\boxelbow&\boxcross&\boxcross&\boxelbow&\bl\,-2\\
\bl 3&\boxcross&\boxcross&\boxcross&\boxcross&\boxelbow&\bl\,-3\\
\bl 2&\boxelbow&\boxelbow&\boxcross&\boxcross&\boxelbow&\bl\,-4\\
\bl 1&\boxelbow&\boxelbow&\boxcross&\boxelbow&\boxelbow&\bl\,-5\\
\bl&\bl-1&\bl-2&\bl-3&\bl-4& \bl-5}
$$
Note that most boxes are replaced by an elbow or a cross in the same
position and the diagonal-symmetric position.  However, boxes
corresponding to the simple generator $s_n$ are replaced by a $2
\times 2$ square of boxes all containing either elbows or crosses.

The condition for a wiring diagram to be the wiring diagram of a PDS
is the following: two wires $p, q$ which cross in a square
corresponding to $b \in D$ are not allowed to touch or cross again
(as we read from northwest to southwest), except when that
touching/crossing happens in one of the two by two squares
corresponding to $s_n$.  If $p, q$ both enter a two by two square
corresponding to a diagonal square $b' \in D$, then the requirement
is instead that the effect on $\wire(D)$ of changing $b$ from a $0$
to a $+$ is not the same as the effect of changing $b'$ between a
$+$ and a $0$.

We allow touching/crossing again in that two by two square  as long
as not all four boxes are touching/crossing.

Now suppose $D$ is an $\oplus$-diagram satisfying all three
conditions of the theorem.  If $D$ does not correspond to a PDS then
by Proposition \ref{prop:greedyLe}, a $\Le$-move can be performed.
Let us, as in Proposition \ref{prop:greedyLe}, pick the
southwestern-most such $\Le$-move.  Thus we have two boxes $x$ and
$y$, where $y$ is filled with a $0$ and $x$ is southwest of $y$.

For the $\Le$-move to be valid -- i.e. for the signed permutation to
be unchanged by the $\Le$-move -- the wires which cross in box $y$ of
$\wire(D)$ must cross or touch again in box $x$.  Here if $x$ or
$y$ corresponds to a generator $s_n$ then one must consider the entire
$2 \times 2$ square of wires.  Suppose first that $y$ corresponds to
a simple generator $s_i$ for $i \neq n$, and let wires $a, b$ cross
in $y$ (we use $y$ to refer to the box in $D$ and also $\wire(D)$).
Say $y$ is in column $c$, using always the labeling of the wiring
diagram.

For $a$ and $b$ to cross again, there must be a $+$ to the left of
$y$, so by (1) there is no $+$ below.  Suppose the closest $+$ to
the left of $y$ is in column $c'$.  Let us suppose first that the
wire $a$ travels down and passes straight through the diagonal,
while the wire $b$ travels leftwards before turning at the first
$+$.  In this case, by (1) and (2), wire $a$ must make a turn in row
$c'$, resulting in a $+$ in position $(c,c')$.  However, using
conditions (2) and (3) we see that it is not possible for wire $b$
to travel below row $c$, and so can never meet $a$ again.  Now
suppose that wire $a$ does not cross the diagonal.  This is only
possible if the diagonal square $b$ of $D$ below $y$ corresponds to
simple generator $s_{n-1}$ and the diagonal square $z$ immediately
southwest of $b$ is a $+$.  Using the conditions (1),(2) and (3) we
obtain a picture similar to
$$
\tableau[sbY]{?&?&+&y\\0&0&?&0\\0&0&?&\tf 0\\0&0& \tf +}
$$
where diagonal boxes are in bold. The wires $a$ and $b$ can only
touch at the box $z$.
But setting $x = z$ is not a valid $\Le$-move, since the effect
of changing $z$ is to swap $a$ and $-b$, not to swap $a$ and $b$.

Finally, suppose $y$ corresponds to the simple generator $s_n$.
Then again there must be a $+$ to the left of $y$, and automatically
we deduce that one wire (say $a$) travels down through the diagonal
while wire $b$ travels to the left and turns at the closest $+$. The
argument for this case is the same as before: the wires $a$ and $b$
never touch again.  Thus if $D$ satisfies the $\Le$-conditions it
must be a $\Le$-diagram.

Let $D$ be an $\oplus$-diagram.  We shall show that if $D$ does not
satisfy the $\Le$-conditions then one of the $\Le$-moves
$(x,y,S_0)$, $(x,y,S_1)$ and $(x,y,S_2)$ can be applied to it, which
will complete the proof.  If $D$ violates the $\Le$-condition (1) of
Theorem~\ref{thm:LeD} then a rectangular $\Le$-move can be performed
on it, as in the proof of Theorem~\ref{thm:LeA}. Otherwise, suppose
$D$ violates either condition (2) or (3) of Theorem~\ref{thm:LeD}.

Let $y$ be the box containing the $0$ violating condition (2) or (3)
closest to the bottom left.  Suppose $y$ is distance $d$ from the
diagonal.  Let $z_1$ be the box to the left of $y$ containing a $+$
which is closest to $y$.

Suppose first that there is a box $z_2$ such that $(y,z_1,z_2)$
violates condition (2).  Pick $z_2$ rightmost with this property.
Let $z_1$ be $k$ boxes to the left of $y$ and let $x$ be the box $k$
boxes below $z_2$.  We claim that $D|_{(x,y)}$ is compatible with
$S_1$ and shall explain the claim pictorially. Using condition (1)
and the rightmost property of $z_2$ we may deduce at least the
following information:

$$
%\setcellsize{13}
\tableau[sbY]{ ?&?&z_1&0&0&y \\?&?&?&0&0&0 \\?&?&?&0&0&0 \\z_2&0&?&0&0 \\ ?&0&?&0\\  ?&0&? \\
x&0}
$$
To deduce the location of the remaining $0$'s we need to use the
assumption that $y$ is the bottom leftmost box containing a 0
violating conditions (2) or (3).  The $0$'s to the left of $y$ allow
us to deduce that the $?$'s in the rows between $x$ and $z_2$ are
$0$'s.  The $0$'s below $y$ allow us to deduce that the $?$'s north
of $z_2$ and below $z_1$ are also $0$'s.  This shows that
$D|_{(x,y)}$ is compatible with $S_1$ and so the $\Le$-move
$(x,y,S_1)$ can be performed on $D$.

If $y$ does not participate in a pattern of type
(2) but {\it does} participate in a pattern of type 3, then
there is no $+$ southwest of $z_1$ and $(d+1)$ rows south.  Using
condition (1), there must be a box $z_2$ containing
$+$ which is $(d+1)$ rows below $z_1$, and there is a $z_3$ so that
$(y,z_1,z_2,z_3)$ violates condition (3).  We assume $z_3$ is chosen
as south and as east as possible (there may be more than one
choice).  Let $x$ be the box $k$ rows south of $z_2$ and in the same
column as $z_3$, where $z_1$ is $k$ boxes to the left of $y$.  We
claim that $D|_{(x,y)}$ is compatible with $S_2$ and shall explain
the claim pictorially.  The following information can be deduced
using the eastmost-ness of $z_1$ and $z_3$ and condition (1).

$$
\tableau[sbY]{ ?&?&?&?&z_1&0&0&y \\?&?&?&?&?&0&0&0 \\ z_3&0&0&0&z_4&0&0&0 \\
?&0&0&0&?&0&0&0 \\ ?&0&0&0&z_2&0&0 \\ ?&0&0&0&?&0 \\ ?&0&0&0&?\\
x&0&0&0}
$$
The $0$'s to the right of $z_2$ allow us to deduce that the $?$'s
above $x$ and south of $z_2$ are $0$'s.  The southmost-ness of $z_3$
and the assumption that $y$ and $z_1$ are not involved in a
violation of condition (2) gives us the remaining $0$'s between
$z_3$ and $x$.  The $0$'s between $y$ and $z_1$ and the
assumption on $y$ being as southwest as possible allows us to
deduce that the $?$'s below $z_2$ are $0$'s.  Finally,
the southwest assumption on $y$ allows us to deduce that the $?$'s between
$z_1$ and $z_4$ are $0$'s.  This shows that
$D|_{(x,y)}$ is compatible with $S_2$ and so the $\Le$-move
$(x,y,S_2)$ can be performed on $D$.

\end{proof}
\begin{remark}
Theorems \ref{thm:LeA} and \ref{thm:LeB} can also be proved using
wiring diagrams.
\end{remark}

\section{Type $(D_n,1)$}
Now let $(W,j) = (D_n,1)$ so that $O_w \subset Q^j$ can be
identified with a shape contained inside the doubled tail diamond of
size $n$.  The analysis for this case is nearly identical to type
$(B_n,1)$.

We call the boxes labeled $n-1$ and $n$ (if contained in $O_w$) the
{\it middle} boxes, and any two boxes with the same simple label
{\it conjugate}. If $b$ is a middle box then the conjugate of $b$ is
the other middle box.  The proof of the following statement is the
same as for Theorem~\ref{thm:LeBB}.

\begin{theorem}\label{thm:LeDD}
A type $(D_n,1)$ $\Le$-diagram is an $\oplus$-diagram $D$ of shape
$O_w$ such that if there is a $0$ in a box $c$ greater than the
middle boxes, then for any box $b \lessdot c$ covered by $c$ the
conjugate $b'$ and the box $b$ itself cannot both contain $+$'s.
\end{theorem}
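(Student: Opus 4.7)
Following the proof of Theorem \ref{thm:LeBB} directly, the plan is first to establish the following algebraic fact for type $(D_n,1)$: if $D,D'$ are two $\oplus$-diagrams of the same shape $O_w$ with $v(D)=v(D')$, then the subwords $\v(D)$ and $\v(D')$ can be transformed into one another using only commutations $s_is_j=s_js_i$ (between pairwise commuting simple generators) and trivial cancellations $s_is_i=1$; no braid relation of the form $s_is_js_i=s_js_is_j$ is required. This rests on Stembridge's theorem that $w\in W^1$ is fully commutative, together with the shape of $Q^1$ (a doubled tail diamond of width two): any triple of boxes in $O_w$ labelled $i,j,i$ with $s_i,s_j$ non-commuting would be required to perform a braid move, and the labelling of $Q^1$ for $D_n$ at $j=1$ never produces such a configuration inside $O_w$.

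With this in hand, the criterion of the theorem translates directly. The unique PDS $\v_+$ is the subexpression that pushes its $+$'s as late as possible in any reading order compatible with $O_w$. If a $0$ lies in some box $c$ greater than both middle boxes, and $c$ covers a box $b$ with $D(b)=+$ and $D(b')=+$ at the conjugate $b'$, then $s_b$ and $s_{b'}$ can be commuted together through the intervening boxes (which contain only generators commuting with $s_b=s_{b'}$), cancelled via $s_bs_b=1$, and this cancellation permits dropping $s_c$ in favour of a later generator — producing an $\oplus$-diagram with the same $v$ but with its $+$'s strictly later, contradicting the PDS property. Conversely, if no such forbidden pattern occurs, no elementary commutation/cancellation step can move a $+$ later, so $\v(D)$ is itself the PDS and $D$ is a $\Le$-diagram.

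The main obstacle is the careful bookkeeping around the two middle boxes. In contrast with type $(B_n,1)$, where the unique middle box is self-conjugate and the poset is a single chain, here the two middle boxes are each other's conjugates and the shape forks at them. One must verify: (i) that the region strictly between $b$ and $b'$ within $O_w$ consists only of generators commuting with $s_b$; and (ii) that the hypothesis ``$c$ greater than both middle boxes'' is exactly the condition ensuring that the commutation path bringing $s_b$ and $s_{b'}$ together, and the subsequent replacement of $s_c$ by a later generator, all remain inside $O_w$. Once these geometric verifications inside the doubled tail diamond are carried out, the remainder of the argument — namely that the forbidden pattern is equivalent to the existence of an equivalent $\oplus$-diagram with $+$'s further right — proceeds in the same mechanical manner as for Theorem \ref{thm:LeBB}, yielding both directions of the characterization.
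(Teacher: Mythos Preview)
Your overall strategy—reduce to the argument of Theorem~\ref{thm:LeBB} by observing that only commutations and cancellations (no braid moves) are needed—is exactly what the paper does; it simply asserts that the proof is the same as for Theorem~\ref{thm:LeBB}.

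However, your execution of the forward direction is confused about the $0/{+}$ convention. In this paper a $0$ in box $b$ means $s_b$ is \emph{present} in $\v(D)$, while a $+$ means it is \emph{absent}. So when $D(b)=D(b')=+$, there are no copies of $s_b=s_{b'}$ in the subexpression to ``commute together'' or ``cancel via $s_bs_b=1$''; that sentence does not describe anything happening in $\v(D)$. The correct mechanism is the dual one: the generator that \emph{is} present is $s_c$ (since $D(c)=0$), and the point is that $s_c$ commutes with every generator actually appearing in $\v(D)$ on the open interval $(c',c)$, where $c'$ is the conjugate of $c$. Indeed, the only boxes in $(c',c)$ whose simple label fails to commute with $s_c$ are $b$ and $b'$, and those contribute nothing to $\v(D)$ precisely because they carry $+$'s. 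Hence the $0$ at $c$ can be slid to $c'$ (equivalently, the $\Le$-move $(c',c,S_0)$ applies), yielding an $\oplus$-diagram with the same $v$ and with $+$'s strictly later—contradicting the PDS property. With this correction your argument goes through.

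A smaller point: your justification that ``the labelling of $Q^1$ never produces a triple $i,j,i$ with $s_i,s_j$ non-commuting'' is not literally true—the boxes labelled $n{-}2$, $n{-}1$ (or $n$), $n{-}2$ around the fork form exactly such a triple. What is true, and what you need, is that applying a braid move to such a triple produces a word that is no longer a subexpression of $\w$ (since $s_{n-1}$ and $s_n$ each occur only once in $\w$), so braid moves are never useful for passing between two subexpressions of $\w$. This is the content of ``no braid relation is required'' in the paper's argument.
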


Let $x < y$ be a pair of conjugate and comparable boxes in $O_w$.
Let $S_0$ denote the following filling of $(x,y)$:
$$
\tableau[sbY]{\bl&\bl&\bl&?&?&?&+&y\\ x&+&?&?&?}
$$
if $(x,y)$ is not adjacent to the middle boxes and
$$
\tableau[sbY]{+&y\\ x&+}
$$
otherwise.

The following claim is immediate.

\begin{proposition}
The triples $(x,y,S_0)$ defined above are $\Le$-moves.
\end{proposition}

\begin{theorem}
The $\Le$-moves $(x,y,S_0)$ form a complete system of $\Le$-moves.
\end{theorem}

\section{Decorated permutations for types $A$ and $B$}\label{DecPerms}
In \cite{Postnikov}, Postnikov defined decorated permutations,
proved that they are in bijection with type $A$ $\Le$-diagrams, and
described the partial order on totally positive Grassmannian cells
in terms of decorated permutations. We will review decorated
permutations in the type $A$ case and then describe type B decorated
permutations.

\subsection{Type A decorated permutations}

In this section we will fix $W=S_n$, the symmetric group, with
standard generators $\{s_i\}$,  $j\in \{1,\dots n\}$, and $W_J =
\{s_1, \dots , \hat{s_j},\dots ,s_n\}$.  Recall from Section
\ref{sec:lusztig} that $R^j$ denotes the poset of cells of the
corresponding Grassmannian.

A {\it decorated permutation} $\tilde{\pi} = (\pi, d)$ is a
permutation $\pi$ in the symmetric group $S_n$ together with a
coloring (decoration) $d$ of its fixed points $\pi (i)=i$ by two
colors. Usually we refer to these two colors as ``clockwise'' and
``counterclockwise'', for reasons which the partial order will make
clear.  When writing a decorated permutation in one-line notation,
we will put a bar over the clockwise fixed points.  A nonexcedance
of a (decorated) permutation $\tilde{\pi}$ is an index $i \in [n]$
such that either $\pi(i) < i$ or $\pi(i) = i$ and $i$ is labeled
clockwise.

Let $\Le(j,n)$ denote the set of type $(A_{n-1},j)$ $\Le$-diagrams
and let $\D_{j,n}$ denote the set of decorated permutations on $n$
letters with $j$ nonexcedances. Let us refer to the maximal order
ideal in $Q^j$ as the {\it maximal shape}.  So for example in type
$A_n$ the maximal shape for $Q^j$ is a $j \times (n-j)$ rectangle.

Much of the content of the following result can be found in
\cite{Postnikov, Williams2}.

\begin{theorem}\label{thm:commuteA}
There exist maps $\Phi_1, \Phi_2$, and $\Phi_3$, such that the
following diagram commutes.
\end{theorem}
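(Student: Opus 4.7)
The plan is to exhibit three natural maps---one between each pair of the sets $\Le(j,n)$, $\D_{j,n}$, and $\I^j$ (the indexing set of cells, consisting of pairs $(x,w)$ with $x\leq w$ in $W^j_{\max}$)---and then verify that they fit into a commutative triangle. The content is largely a synthesis of results already established here and in \cite{Postnikov, Williams2}, so the main issues will be (i) making sure each map is defined consistently with the authors' French/fully commutative conventions, and (ii) tracking decorations of fixed points through the bijections.

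First I would define $\Phi_1 \colon \Le(j,n) \to \I^j$: given a $\Le$-diagram $D$ of shape $O_w$, send it to $(v(D), w)$. Well-definedness (independence of reading order) and bijectivity are immediate from Proposition~\ref{prop:independence} and Proposition~\ref{prop:cells}. Next I would define the direct map $\Phi_2 \colon \Le(j,n) \to \D_{j,n}$ via Postnikov's wiring-diagram recipe: place $D$ inside the $j \times (n-j)$ rectangle (recalling that Postnikov's diagrams are our reflection), replace every $+$ by an elbow and every $0$ by a crossing, and extend trivially through the empty boxes of $O_w$. Follow each of the $n$ wires entering from the southern/eastern boundary to its exit point on the western/northern boundary; the resulting bijection of boundary labels is a permutation $\pi \in S_n$ with exactly $j$ nonexcedances. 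Decorate the fixed points $\pi(i)=i$ as clockwise if $i$ is an empty row and as counterclockwise if $i$ is an empty column. That this lands in $\D_{j,n}$ and is a bijection is precisely \cite[\S19]{Postnikov} (combined with the compatibility checked in Theorem~\ref{thm:LeA}).

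Then I would define $\Phi_3 \colon \I^j \to \D_{j,n}$ using the Marsh--Rietsch parametrization: given $(x,w)\in \I^j$, use the unique PDS $\v_+$ for $x$ inside the reduced word of $w$ obtained from the canonical reading order of $O_w$, and read off the associated permutation and its fixed-point decoration from the subword / wiring diagram in exactly the same way as for $\Phi_2$, treating positions of $\v_+$ that are $1$'s as elbows and positions used as simple transpositions as crossings. Bijectivity of $\Phi_3$ amounts to the combination of Theorem~\ref{thm:cell} with Postnikov's parametrization; again this is essentially already in \cite{Postnikov, Williams2}.

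Finally, commutativity $\Phi_2 = \Phi_3 \circ \Phi_1$ follows by tracking a single $\Le$-diagram $D$. By construction of $\Phi_1$, the pair $\Phi_1(D)=(v(D),w)$ has its unique PDS $\v_+$ equal to $\v(D)$: this is the content of Lemma~\ref{l:positive} together with the definition of a $\Le$-diagram. Hence the subword used by $\Phi_3$ to build the wiring diagram literally coincides with the subword $\v(D)$ used by $\Phi_2$, so the two wiring diagrams, and therefore their permutations and fixed-point decorations, agree. The bulk of the work is bookkeeping; the main obstacle---and the only genuinely delicate point---is verifying that the decoration rule for fixed points arising from empty rows versus empty columns is the one induced by the excedance/nonexcedance structure after the French-to-English reflection, since this is where conventions between our setup and Postnikov's differ.
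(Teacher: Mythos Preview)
Your proposal does produce a commutative triangle, but it differs from the paper in a way that removes the theorem's content. First a bookkeeping point: your labels are permuted relative to the paper's---your $\Phi_1,\Phi_2,\Phi_3$ are (roughly) the paper's $\Phi_2,\Phi_3,\Phi_1$ respectively, so the arrow $\I^j\to\D_{j,n}$ is the paper's $\Phi_1$ and your $\Phi_3$.

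The substantive difference concerns that map $\I^j\to\D_{j,n}$. You define it by first recovering the $\Le$-diagram via the unique PDS and then applying the wiring recipe---i.e.\ you \emph{define} it as the composite of the other two maps, so commutativity is tautological. The paper instead gives an independent closed-form description: $\Phi_1(v,w)=(\pi,d)$ where $\pi=vw^{-1}$, with a fixed point $\pi(i)=i$ decorated clockwise if $i\in\{w(1),\ldots,w(j)\}$ and counterclockwise otherwise. With this in hand, commutativity $\Phi_3=\Phi_1\circ\Phi_2$ is the nontrivial observation that the hook/zigzag construction of $\Phi_3$ amounts to reading the $\Le$-diagram as two superimposed wiring diagrams---one for $v$ and one for $w$---and then computing $w^{-1}$ followed by $v$. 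The payoff is that the paper deduces bijectivity of the explicit formula $\Phi_1$ from the already-known bijectivity of $\Phi_2$ and $\Phi_3$; that bijectivity (of $(v,w)\mapsto vw^{-1}$ with the stated decoration) is the real new statement here, and your argument, by building the third map out of the first two, never reaches it.
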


\vspace{.3cm}
$$
\begin{psmatrix}
\I^j &  & \protect\Le(j,n)\\
& \D_{j,n}
\psset{nodesep=3pt, arrows=->}
%\everypsbox{\scriptstyle}
\ncline{1,1}{2,2}<{\Phi_1}
\ncline{1,3}{2,2}>{\Phi_3}
%\ncarc[arcangle=12]{1,1}{1,3}^{\Phi_0}
\ncline{1,3}{1,1}\taput{\Phi_2}
\end{psmatrix}
$$
\vspace{.3cm}

\subsubsection{From pairs of permutations to decorated permutations}
The bijection $\Phi_1$ was stated (with slightly different conventions)
in the appendix of
\cite{Williams2}.
Let $(v,w) \in \I^j$.  Then $\Phi_1((v,w)) =
(\pi, d)$ where $\pi = vw^{-1}$.  To define the decoration $d$, we
make any fixed point that occurs in one of the positions $w(1),
w(2), \dots, w(k)$ a {\it clockwise loop} -- a nonexcedance -- and
we make any fixed point that occurs in one of the positions $w(k+1),
\dots , w(n)$ a {\it counterclockwise loop} -- a weak excedance. The
fact that $\Phi_1$ is a bijection will be established in Section
\ref{sec:commute}.

\subsubsection{From pairs of permutations to $\Le$-diagrams (and
back)}
%The bijection $\Psi_2$ is defined as follows.  Given a pair $(v,w)
%\in \I^j$, we choose a reduced decomposition for $w$, which
%determines a unique PDS for $v$.  The map $\Psi_2$ sends $(v,w)$ to
%the corresponding $\Le$-diagram $D$ of shape $O_w$, which is the
%$\oplus$-diagram characterized by Theorem \ref{thm:LeA}.

To define $\Phi_2$, we may simply take a $\Le$-diagram $D$ of shape
$O_w$ to the pair $(v(D),w)$.  It follows from Proposition
\ref{prop:cells} that this is a bijection.

The map $\Phi_2$ can also be described as
follows (see also \cite{Postnikov}). View an $\oplus$-diagram $D$
within the maximal shape and label the unit steps of the northeast
border of the maximal shape with the numbers from $1$ to $n$ (from
northwest to southeast); we label the southwest border with the
numbers from $1$ to $n$ (from northwest to southeast). The map
$\Phi_2$ is defined by interpreting an $\oplus$-diagram $D$ as a
wiring diagram; replace each $0$ with a $\textcross$ and each $+$
with a $\textelbow$. By starting from the southwest labels of the
border and traveling northeast we can read off a permutation $v$.
If we replace all boxes with a
$\textcross$ and perform the above procedure we can read off a
permutation which we will call $w$. Then $\Phi_2(D) = (v,w)$.

\subsubsection{From $\Le$-diagrams to decorated permutations}

Now we describe $\Phi_3$. Given a $\Le$-diagram $D$ of shape $O_w$,
we label the northeast border of $O_w$ with the numbers $1$ to $n$
from northwest to southeast, just as in the definition of $\Phi_2$.
We ignore the $0$'s in the $\Le$-diagram and replace each $+$ with a
vertex as well as a ``hook" which extends north and east (ending at
the boundary of $O_w$). We let $G(D)$ denote the graph which is the
union of the hooks and the vertices. Now define a permutation $\pi$
as follows.  If $i$ is the label of a horizontal unit step, then we
start at $i$, and travel along $G(D)$, first traveling as far south
as possible, and then zigzagging east and north, turning wherever
possible (at each new vertex).  Then $\pi(i)$ is defined to be the
endpoint of this path. Clearly $\pi(i) \geq i$.  Similarly, if $i$
is the label of a vertical unit step, then we start at $i$, and
travel along $G(D)$, first traveling as far west as possible, and
then zigzagging north and east, turning wherever possible.  As
before,  $\pi(i)$ is defined to be the endpoint of this path, and
clearly $\pi(i) \leq i$. If $i$ is the step which cannot travel
anywhere then $i$ becomes a counterclockwise fixed point (weak
excedance) if the step is horizontal and$i$ becomes a clockwise
fixed point (nonexcedance) if the step is vertical.  This map was
proved to be a bijection in \cite{SW} and is a simplification of a
map found by Postnikov.

\begin{remark}
If we consider a clockwise fixed point to be a nonexcedance and a
counterclockwise fixed point to be a weak excedance, then it is
clear that $\Phi_3$ maps $\Le$-diagrams contained in a $j \times
n-j$ rectangle to permutations in $S_n$ with precisely $j$
nonexcedences. Clearly even more is true; the shape of the Young
diagram $O_w$ determines the positions of the nonexcedences and weak
excedances.
\end{remark}

%\begin{remark}
%This bijection can be described in another way, which is due to
%Burstein \cite{Burstein}.
%Having labelled the Young diagram as above, we associated a cycle to
%each column as follows.  If the column is labelled by the number $j$
%and there are $+$'s in that column in the rows labelled by
%$i_1, \dots , i_k$, then
%we associate the cycle $(i_1, \dots , i_k, j)$ to that column.
%Multiplying the cycles in order gives the resulting permutation,
%and the decoration is given as above.
%\end{remark}

\begin{example}
Consider the following $\Le$-diagram $D$ (viewed inside of the $j$ by $n-j$ rectangle
associated to the corresponding Grassmannian $Gr_{j,n}$).
\begin{equation*}
\tableaux{\  &\ &\ &\  \\ 0 & + &\ &\ \\ 0 & 0 & 0 &\ \\ + & +&+&0}
\end{equation*}
Then $\Phi_2(D) = ((1,3,6,2,4,5,8,7), (1,4,6,8,2,3,5,7)).$
To compute $\Phi_3(D)$, we construct the following graph $G(D)$.
\begin{figure}[h]
\centering
\includegraphics[height=.7in]{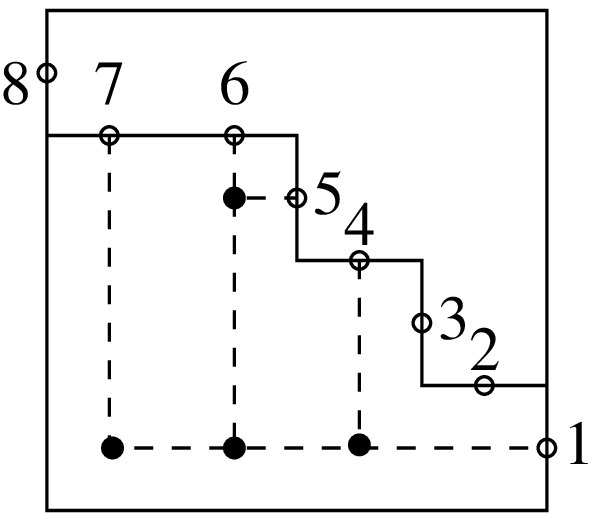}
\label{Bijection}
\end{figure}
Following the procedure explained above, the resulting permutation is
$(\overline{1},4,5,3,8,6,7,2)$.
%Alternatively, we will get the above permutation by
%multiplying the following cycles:
%$(2)(1, 4)(1, 5, 6)(1, 7)$.

\end{example}

\subsubsection{Proof of Theorem
\ref{thm:commuteA}}\label{sec:commute} If we compare the definition
of $\Phi_3$ to $\Phi_1$ and $\Phi_2$, it is clear that what $\Phi_3$
does is to interpret the $\Le$-diagram as two wiring diagrams (one
for $v$ and one for $w$) and then to compute $w^{-1}$ followed by
$v$.  Thus $\Phi_3 = \Phi_1 \circ \Phi_2$, proving the
commutativity.  Since $\Phi_2$ and $\Phi_3$ are bijections, so is
$\Phi_1$.

\subsection{Type B decorated permutations and type B permutation tableaux}
We now describe the type $(B_n,n)$ analogue of Theorem
\ref{thm:commuteA}. Let $\ss_0, \ss_1, \dots , \ss_{n-1}$ denote the
Coxeter generators of the type $B_n$ Coxeter group, where $\ss_0$
labels the special node of the Dynkin diagram.  {\it Note that in
this section only we are departing from the earlier notation set up
in Section \ref{s:comin} by using $0$ rather than $n$ for the
special generator.}  To avoid confusion, we refer to all objects as
``type $B_n$'' objects without specifying the cominuscule node.

Recall from Section \ref{sec:typeB} that $B_n$ Weyl group elements
can be thought of as signed permutations via the embedding $\iota$
into $S_{\{\pm 1, \dots , \pm n\}}$.  We will use the notation
$(a_1, \dots , a_n)$ to denote the signed permutation $\pi$ such
that $\pi(i) = a_i$.
%
%\begin{proposition}\cite{BB}\label{embedding}
%Consider the homomorphism from the $B_n$ Coxeter group to the symmetric
%group $S_{\{\pm 1, \dots , \pm n \}}$
%on letters $\{\pm 1, \dots , \pm n\}$, which sends
%$\ss_0$ to $(-1,1)$ and $\ss_i$ to the ``signed transposition"
%$(i, i+1) (-i, -i-1)$.  This map is injective and is surjective
%onto the set of permutations $\pi$ such that $\pi(i)=-\pi(-i)$.
%Under
%this map the Bruhat order on $B_n$ is an induced subposet of the Bruhat
%order on
%$S_{\{\pm 1, \dots , \pm n \}}$.
%\end{proposition}

We define a {\it $B_n$ decorated permutation} to be a signed
permutation in which fixed points are either labeled clockwise or
counterclockwise, and such that $\pi(i)$ is a clockwise fixed point
if and only if $\pi(-i)$ is a counterclockwise fixed point.  When
writing a type $B$ decorated permutation in list notation, we will
indicate a clockwise fixed point by putting a bar over the
corresponding letter.

In this section our parabolic subgroup $W_J$ will be $\{\ss_1, \dots
, \ss_{n-1}\}$.  The pairs $(v,w)$ of Theorem \ref{thm:cell} will be
denoted $\I^B$.  Let $\Le^B(n)$ denote the set of type $B$
$\Le$-diagrams contained in the maximal shape (this time an inverted
staircase), and let $\D^B_n$ denote the set of type $B$ decorated
permutations on the letters $\{\pm 1, \dots , \pm n\}$.

\begin{theorem}\label{Bcommutative}
There exist bijections $\Phi^B_1, \Phi^B_2$, and $\Phi^B_3$, such
that the following diagram commutes. \vspace{.3cm}
$$
\begin{psmatrix}
\I^B &  & \protect\Le^B(n)\\
& \D^B_n
\psset{nodesep=3pt, arrows=->}
%\everypsbox{\scriptstyle}
\ncline{1,1}{2,2}<{\Phi^B_1}
\ncline{1,3}{2,2}>{\Phi^B_3}
%\ncarc[arcangle=12]{1,1}{1,3}^{\Phi^B_0}
\ncline{1,3}{1,1}\taput{\Phi^B_2}
\end{psmatrix}
$$
\vspace{.3cm}

\end{theorem}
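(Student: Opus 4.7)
The plan is to imitate the strategy used for Theorem~\ref{thm:commuteA}, reducing as much as possible to the type $A$ case by means of the embedding $\iota : B_n \to S_{\{\pm 1,\dots,\pm n\}}$ introduced in Section~7. As in type $A$, I will establish that $\Phi_2^B$ and $\Phi_3^B$ are bijections directly; define $\Phi_1^B$ by $(v,w)\mapsto (vw^{-1},d)$ with a decoration $d$ determined by $w$; show commutativity of the diagram by a wiring-diagram calculation; and then conclude that $\Phi_1^B$ is a bijection automatically.

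First I would define $\Phi_2^B$ by sending a $\Le$-diagram $D$ of shape $O_w$ to the pair $(v(D),w) \in \I^B$. This map is a bijection by Proposition~\ref{prop:cells} applied to the maximal parabolic $P_n$ of type $B_n$. Next I would define $\Phi_3^B$ by reflecting $D$ across the diagonal of the staircase to obtain the symmetric type $(A_{2n-1},n)$ $\Le$-diagram $\iota(D)$ (using the correspondence from Section~7, which already underlies the proof of Theorem~\ref{thm:LeB}), then applying the type $A$ map $\Phi_3$, and finally reading the result as an element of $S_{\{\pm 1,\dots,\pm n\}}$. The construction of $\Phi_3$ via the hook graph $G(\iota(D))$ is visibly equivariant with respect to the reflection across the antidiagonal, so the resulting permutation $\pi$ satisfies $\pi(i)=-\pi(-i)$ and is therefore a signed permutation. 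A fixed point $i$ of $\pi$ either lies strictly above or strictly below the diagonal (in which case it is paired with its conjugate fixed point $-i$ of opposite type), or it is the self-conjugate index $i=0$ direction where the two fixed-point decorations coincide with the two middle-row conventions of Section~8; in each case the resulting decoration is a type $B$ decoration in the sense of Section~\ref{DecPerms}. Injectivity and surjectivity of $\Phi_3^B$ then follow from those of $\Phi_3$ restricted to the symmetric sub-family of diagrams of shape the doubled staircase.

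For $\Phi_1^B$, I would copy the type $A$ recipe: send $(v,w)\in \I^B$ to $\pi:=vw^{-1}$, decorating a fixed point $\pi(i)=i$ as clockwise if $i$ occurs among $w(1),\dots,w(k)$ (nonexcedance positions) and counterclockwise otherwise, with $k$ read off from the parabolic $W_J$. The required compatibility $\pi(i)=i\text{ clockwise} \iff \pi(-i)=-i\text{ counterclockwise}$ follows because $w\in W^J$ is a minimal length coset representative, which forces the sets $\{w(1),\dots,w(k)\}$ and $\{w(k+1),\dots,w(n)\}$ to be interchanged under the sign involution.

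Finally, I would establish commutativity by the same wiring-diagram observation used in Section~\ref{sec:commute}: $\Phi_3^B$ reads off two signed permutations $v$ and $w$ from the same symmetric wiring diagram $\iota(D)$, and the resulting decorated signed permutation is precisely $vw^{-1}$ with the $w$-induced decoration, i.e.\ $\Phi_3^B=\Phi_1^B\circ \Phi_2^B$. Since $\Phi_2^B$ and $\Phi_3^B$ are bijections, so is $\Phi_1^B$. The main obstacle I anticipate is the careful bookkeeping along the antidiagonal: boxes on the staircase boundary corresponding to $s_n$ become $2\times 2$ blocks in the doubled wiring diagram (as noted in the proof of Theorem~\ref{thm:LeD}), and one must check that the $\Phi_3$ convention for fixed points, once restricted to symmetric wiring diagrams, produces exactly the clockwise/counterclockwise pairing demanded by the definition of $\D_n^B$. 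Once that combinatorial check is done, the rest is the same diagram chase as in type $A$.
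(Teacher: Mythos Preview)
Your approach is essentially the same as the paper's: both reduce to the type $A$ result (Theorem~\ref{thm:commuteA}) via the diagonal reflection $\iota$. The paper packages this more cleanly by defining three involutions $\theta_{\Le}$, $\theta_\I$, $\theta_\D$ on the type $(A_{2n-1},n)$ objects, identifying $\Le^B(n)$, $\I^B$, $\D^B_n$ with their respective fixed-point sets via embeddings $\iota_{\Le}$, $\iota_\I$, $\iota_\D$, observing once that $\Phi_1,\Phi_2,\Phi_3$ commute with the involutions (by diagonal-reflection invariance of their definitions), and then restricting Theorem~\ref{thm:commuteA} to fixed points. This yields all three bijections and the commutativity simultaneously, without the separate arguments you give for each map; in particular the paper never writes down $\Phi_1^B$ explicitly or verifies the decoration condition by hand.

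One correction: your anticipated ``main obstacle'' about $2\times 2$ blocks along the diagonal is misplaced. That phenomenon is specific to type $D$ (where $\delta(s_n)=(1,-2)(2,-1)$ is a product of two commuting transpositions, as in the proof of Theorem~\ref{thm:LeD}), not to type $B$. Here $\iota(s_n)=(-1,1)$ is a single transposition, and the embedding $\iota_{\Le}$ is literal reflection of the staircase over the diagonal with no special treatment of diagonal boxes. So the bookkeeping you worry about does not arise. Also, there is no ``self-conjugate index $i=0$'' in $\{\pm 1,\dots,\pm n\}$; fixed points of the signed permutation $\pi$ come in pairs $\{i,-i\}$ with opposite decorations, which is exactly how the paper's $\theta_\D$ acts.
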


\begin{proof}
Let $\theta_{\Le}: \Le(n,2n) \to \Le(n,2n)$ denote the involution of
type $(A_{2n-1},n)$ $\Le$-diagrams obtained by reflection in the
diagonal.  Reflecting a type $B$ $\Le$-diagram in the diagonal
defines an embedding $\iota_{\Le}: \Le^B(n) \to \Le(n,2n)$ such that
the image of $\iota_{\Le}$ is the fixed points of $\theta_{\Le}$.

Let $\I^n$ denote the parametrising set of Theorem \ref{thm:cell}
for $(A_{2n-1},n)$ where we take the symmetric group to be $S_{\{\pm
1, \dots , \pm n\}}$.  Define $\iota_\I: \I^B \to \I^n$ by
$\iota_\I(x,w) = (\iota(x),\iota(w))$ where on the right hand side
we use $\iota: B_n \to S_{\{\pm 1, \dots , \pm n\}}$.  This map
makes sense since $\iota$ is a Bruhat embedding \cite{BB}.  The
image of $\iota_\I$ is the set of fixed points of the map
$\theta_\I$, obtained by applying $\theta: S_{\{\pm 1, \dots , \pm
n\}} \to S_{\{\pm 1, \dots , \pm n\}}$ given by $\theta(\pi)(i) =
-\pi(-i)$, to each of a pair of permutations.

Similarly, define $\iota_\D : \D^B_n \to \D(n,2n)$.  Again the image
is the set of fixed points of $\theta_\D: \D(n,2n) \to \D(n,2n)$ the
map induced by $\theta$.  Here if $\pi(i) = i$ is a fixed point
labeled clockwise (resp. counterclockwise) then $\theta_\D(\pi)(-i)
= -i$ is a fixed point labeled counterclockwise (resp. clockwise).

By the diagonal reflection invariance of the definitions of $\Phi_1,
\Phi_2, \Phi_3$ in Theorem \ref{thm:commuteA} we see that the three
bijections commute with the respective involutions $\theta_{\Le},
\theta_\D$, and $\theta_\I$.  We may thus restrict Theorem
\ref{thm:commuteA} to the fixed points of $\theta_{\Le}, \theta_\D$,
and $\theta_\I$, giving the stated result.
\end{proof}

%The bijection $\Phi^B_0$ is defined as before.  Given a pair
%$(v,w) \in R^0$, we choose a reduced decomposition for $w$, which
%determines a unique PDS for $v$.  We then map $(v,w)$ to the
%corresponding $\Le$-diagram $D$ of shape $O_w$, which will be an
%$\oplus$-diagram characterized by Theorem \ref{thm:LeB}.

%\subsubsection{The bijections}
%To define $\Phi^B_2$ and $\Phi^B_3$, we view the $\oplus$-diagram
%$D$ within the maximal shape (a staircase now) and reflect it over
%the diagonal $y=x$.   We label both the northeast and the southwest
%borders of the Young diagram with the numbers $\{-n, -n+1, \dots,
%-1, 1, \dots , n\}$ in increasing order from northwest to southeast.
%We then follow precisely the definitions of $\Phi_2$ (replacing each
%$0$ with a $\textcross$ and each $+$ with a $\textelbow$ and
%treating the $\Le$-diagram as a wiring diagram) and $\Phi_3$
%(ignoring $0$'s and replacing $+$'s with hooks; then taking $n$
%walks along the resulting graph $G^B(D)$).
%
%We define $\Phi^B_1$ as follows. Let $(x,w) \in \I^0$.  Then
%$\Phi^B_1((x,w)) = (\pi, d)$ where $\pi = xw^{-1}$. To define the
%decoration, we make any fixed point that occurs in one of the
%positions $w(1), w(2), \dots, w(n)$ a {\it counterclockwise fixed
%point}, and we make any fixed point that occurs in one of the
%positions $w(-1), w(-2), \dots , w(-n)$ a {\it clockwise fixed
%point}.

\begin{example}
Consider the following $\Le$-diagram $D$.
\begin{equation*}
\tableaux{+  &\ &\  \\ 0 & 0  \\ + }
\end{equation*}
This diagram corresponds to the element $(v,w)=(\ss_0 \ss_1, \ss_2
\ss_0 \ss_1 \ss_0)$ in $\I^0$, which in list notation is equal to
$((2,-1,3), (-3,-1,2))$.  To compute the corresponding decorated
permutation we construct from $D$ the following graph $G(D)$.
\begin{figure}[h]
\centering
\includegraphics[height=.7in]{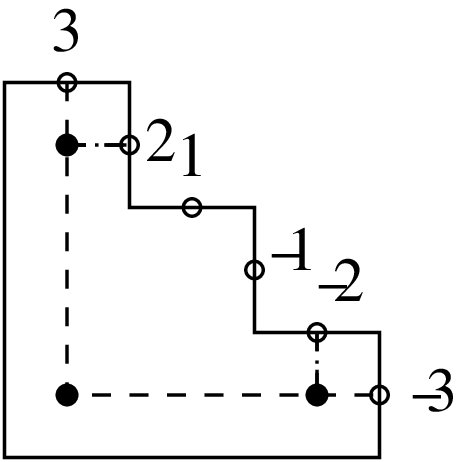}
\label{BBijection}
\end{figure}
Then the resulting decorated permutation is $(\overline{1}, 3, -2)$.
%Alternatively, we will get the above permutation (minus the data
%of the decoration) by multiplying the following cycles:
%$(-3, -2) (1) (-3, 2, 3)$.
\end{example}

\subsubsection{Type $B$ permutation tableaux}
We define a type $B_n$ {\it permutation tableau} to be a type
$(B_n,n)$ $\Le$-diagram $D$ of shape $O_w$ which contains no
all-zero column. In other words, if a column of $O_w$ has at least
one box, not all boxes in that column may be $0$ in $D$. Since
$\Phi^B_3$ is a bijection, the type $B_n$ permutation tableaux are
in bijection with the set of type $B_n$ decorated permutations such
that all fixed points in positions $1$ through $n$ are
counterclockwise.  Thus the type $B_n$ permutation tableaux are in
bijection with the set of $B_n$ permutations; in particular, there
are $2^n n!$ of them. Later, Proposition \ref{perm-tab} will give a
more refined count of the type $B_n$ permutation tableaux.

Note that if a type $B_n$ $\Le$-diagram contains an all-zero column,
then the diagonal box in that column contains a $0$, which implies
that all boxes to its left are $0$.  Deleting this ``hook" we obtain
a type $B_{n-1}$ $\Le$-diagram, and if we repeat this procedure, we
will eventually obtain a type $B$ permutation tableau.

Permutation tableaux in type $A$ were studied in \cite{SW}.  They
are simpler than $\Le$-diagrams but can be applied to the study of
permutations.  Rather surprisingly, type $A$ permutation tableaux
are related to the asymmetric exclusion process \cite{CW}.

%Since they are in bijection with permutations, enumerative results
%about them are prettier and easier than those about $\Le$-diagrams,
%and moreover, one can typically deduce enumerative results about
%$\Le$-diagrams from enumerative results about permutation tableaux.
%As the bijection between permutation tableaux and permutations is
%nontrivial, one can sometimes prove results about permutations more
%easily using permutation tableaux. Finally, the type A permutation
%tableaux are linked to the asymmetric exclusion process \cite{CW}.

\subsubsection{Partial order on $\I^B$}
Rietsch \cite{Rietsch2} has given a concrete description of the
order relation on cells: $P_{x,w;>0}^J \subset
\overline{P_{x',w';>0}^J}$ precisely if there exists $z \in W_J$
such that $x' \leq xz  \leq wz \leq w'$.  This poset has nice
combinatorial properties: it is thin and EL-shellable, and hence is
the poset of cells of a regular CW complex \cite{Williams2}.

Postnikov \cite{Postnikov} described this poset in the case of the
type $A$ Grassmannian in terms of decorated permutations.  One draws
decorated permutations on a circle; the cover relation involves
uncrossing two chords emanating from $i$ and $j$ that form a
``simple crossing''.  Similarly, one can describe the partial order
on $\I^B$ in the case of the type $B$ Grassmannian in terms of type
$B$ decorated permutations. The cover relation is exactly the same,
except that each time we uncross the pair of chords $i$ and $j$, we
must also uncross the pair of chords $-i$ and $-j$ (which will
necessarily also form a simple crossing).

\section{Enumeration of cells}\label{s:enumerate}
The cells in the totally non-negative Grassmannian for type $A$ were
enumerated in \cite{Williams1,Postnikov}.  In this section we will
give some formulae and recurrences for the number of totally
non-negative cells in Grassmannians of types $(B_n,1)$, $(D_n,1)$,
$(B_n,n)$, and $(D_n,n)$.  We will often count the cells which lie
inside the open Schubert cell, or in other words, we will count
$\Le$-diagrams of maximal shape.
%We will also show that type $(B_n,n)$ and
%$(D_n,n)$ $\Le$-diagrams of maximal (inverted staircase) shape are
%in bijection with preference functions and atomic preference
%functions.

\subsection{Enumeration of type $(B_n,1)$ and $(D_n,1)$ \protect\Le-diagrams}

Let $\hat{b}_n$  be the number of type $(B_n,1)$
 $\Le$-diagrams
of maximal shape, and let
$\hat{b}_n(q)$ be the $q$-generating function of
$(B_n,1)$ $\Le$-diagrams of maximal shape, where we weight
$\Le$-diagrams according to the number of $+$'s.  Similarly define
$\hat{d}_n$ and $\hat{d}_n(q)$.
Below, $[i]$ denotes the $q$-analog of $i$.

\begin{proposition}
$\hat{b}_n(q)$ is the coefficient of $x^n$ in
$$\frac{1-(q+q^2)x+(1-q^2)x^2}{1-[2]^2 x+[2]x^2}.$$
In particular, the
numbers $\hat{b}_n$ are equal to sequence A006012 in the Sloane
Encyclopedia of Integer Sequences \cite{Sloane}, and have generating
function $\frac{1-2x}{1-4x+2x^2}$.
\end{proposition}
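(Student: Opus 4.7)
The plan is to derive a linear recurrence for $\hat b_n(q)$ by peeling off the outermost conjugate pair of boxes from the row of length $2n-1$, and then to convert the recurrence into the stated rational generating function.

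First I would unpack the $\Le$-condition from Theorem~\ref{thm:LeBB}: a maximal-shape $(B_n,1)$ $\Le$-diagram is a filling $(e_1,\dots,e_{2n-1})\in\{0,+\}^{2n-1}$ with middle box $e_n$, subject to the constraint that, for each $k$ with $n<k\le 2n-1$, if $e_k=0$ then not both $e_{k-1}$ and its conjugate $e_{2n-k+1}$ are $+$. The key structural observation is that the outermost box $e_1$ appears in no constraint at all, and the only constraint involving $e_{2n-1}$ is the $k=2n-1$ case, which depends only on $e_{2n-2}$ and $e_2$. Moreover the inner row $(e_2,\dots,e_{2n-2})$ of $2n-3$ boxes inherits the same conjugate pairing $i\leftrightarrow 2n-i$, so the induced constraints are exactly the $\Le$-conditions for the maximal shape of type $(B_{n-1},1)$.

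Next, summing first over the inner $(B_{n-1},1)$ diagram, then including the factor $[2]$ from the free $e_1$ and the conditionally forced $e_{2n-1}$, I obtain
\[
\hat b_n(q)=[2]^2\,\hat b_{n-1}(q)-[2]\,A_n,
\]
where $A_n$ is the $q^{\#+}$-weighted count of inner $(B_{n-1},1)$ $\Le$-diagrams with $e_2=e_{2n-2}=+$. For $n\ge 3$ the two endpoint positions of the inner row are distinct; fixing them to $+$ contributes $q^2$ and imposes no new constraint on the remaining boxes $(e_3,\dots,e_{2n-3})$, which themselves form the maximal shape for type $(B_{n-2},1)$ with its natural $\Le$-conditions. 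Thus $A_n$ is a simple multiple of $\hat b_{n-2}(q)$, producing an explicit two-term linear recurrence of the form $\hat b_n(q)=[2]^2\hat b_{n-1}(q)-C(q)\hat b_{n-2}(q)$ valid for $n\ge 3$.

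To finish, I would compute the base polynomials $\hat b_0(q)=1$, $\hat b_1(q)=1+q$, $\hat b_2(q)=1+2q+2q^2+q^3$ directly from the $\Le$-condition, and then translate the recurrence into a rational generating function: multiplying $\sum_{n\ge 0}\hat b_n(q)x^n$ by the characteristic-polynomial denominator collapses all but the three lowest coefficients, which furnish the numerator. Specializing to $q=1$ reduces the recurrence to $\hat b_n=4\hat b_{n-1}-2\hat b_{n-2}$ and the generating function to $(1-2x)/(1-4x+2x^2)$, identifying the integer sequence with A006012. The delicate step is verifying that the inner sub-row really inherits the correct smaller $\Le$-conditions, and that fixing $e_2=e_{2n-2}=+$ neither creates nor cancels any constraint on the inner-inner row; the degenerate coincidence $e_2=e_{2n-2}$ when $n=2$ is absorbed into the base case rather than into the recurrence.
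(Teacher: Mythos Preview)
Your proposal is correct and follows essentially the same argument as the paper: both peel off the outermost conjugate pair of boxes, use the free left box to get a factor $[2]$, and subtract the overcount coming from inner $(B_{n-1},1)$ diagrams whose two endpoint boxes are both $+$, yielding the recurrence $\hat b_n(q)=[2]^2\hat b_{n-1}(q)-q^2[2]\hat b_{n-2}(q)$ for $n\ge 3$ with the same initial values. The only cosmetic difference is that you leave $C(q)$ implicit until the $q=1$ specialization, whereas the paper states $C(q)=(1+q)q^2$ directly.
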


\begin{proof}
It is easy to check that
$\hat{b}_0(q) = 1$, $\hat{b}_1(q) = [2]$, and
$\hat{b}_2(q)=1+2q+2q^2+q^3$.
We will
prove that for $n \geq 3$,
$\hat{b}_n(q)=(1+q)^2\hat{b}_{n-1}(q)-(1+q)q^2
\hat{b}_{n-2}(q)$,
using
the description of Theorem \ref{thm:LeBB}.
A maximal-shape $(B_n,1)$ $\Le$-diagram $D'$ can be obtained from a
maximal-shape $(B_{n-1},1)$ $\Le$-diagram $D$ by adding two boxes,
one to the left and one to the right of $D$.  Each of these two
boxes can contain a $0$ or a $+$, except that we may not put a $0$
into the new box to the right of $D$ if $D$ has a $+$ in its
leftmost and rightmost boxes.  This gives the stated recursion.
\end{proof}

Now let us consider type $(D_n,1$) $\Le$-diagrams.
For $n \geq 4$, we have the same recurrence as above:
$\hat{d}_n(q)=(1+q)^2\hat{d}_{n-1}(q)-(1+q)q^2
\hat{d}_{n-2}(q)$.  We have initial conditions
$\hat{d}_0(q)=1$,
$\hat{d}_1(q)=[2]$,
$\hat{d}_2(q)=[2]^2$,
$\hat{d}_3(q)=[2]^4-q^2[2]$.
This implies the following result.

\begin{proposition}
$\hat{d}_n(q)$ is the coefficient of $x^n$ in
$$\frac{1-(q+q^2)x+(1-2q^2-q^3)x^2+(1+2q-q^3)x^3}{1-[2]^2x+[2]x^2}.$$
In particular, the numbers $\hat{d}_n$ (for $n \geq 3$) are
given by sequence A007070 in the Sloane Encyclopedia of Integer
Sequences \cite{Sloane}.
\end{proposition}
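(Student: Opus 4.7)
I would follow the same three-step strategy as the preceding proposition for $\hat{b}_n(q)$: establish the stated three-term recurrence combinatorially via Theorem~\ref{thm:LeDD}, verify the four initial values by direct inspection, and then extract the rational generating function by routine bookkeeping. To prove the recurrence, I plan to show, in close analogy with the $(B_n,1)$ argument, that for $n\geq 4$ every maximal-shape $(D_n,1)$ $\Le$-diagram $D'$ arises uniquely from a maximal-shape $(D_{n-1},1)$ $\Le$-diagram $D$ by adjoining one new minimum at the far southwest end of the doubled tail diamond and one new maximum at the far northeast end. Under the attendant label shift, the new minimum and new maximum form the unique label-$1$ conjugate pair of $D'$, the old extremes of $D$ form the label-$2$ conjugate pair, and the two middle boxes of $D$ remain the middle boxes of $D'$ (still conjugate to each other). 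The new minimum lies below the middles and carries no $\Le$-condition, while the new maximum contributes the single additional constraint ``new maximum $=0$ implies old min, old max are not both $+$'' by Theorem~\ref{thm:LeDD}. The preexisting constraints of $D$ transfer verbatim, so summing $q^{\#+}$ over the two-box extensions yields a factor $(1+q)^2$ in the inactive case and $(1+q)\cdot q$ in the active case; the signed count then identifies the active contribution as $q^2 \hat{d}_{n-2}(q)$, since the interior of an active $D$ is canonically a maximal-shape $(D_{n-2},1)$ $\Le$-diagram. Combining these gives
\[
\hat{d}_n(q)\;=\;(1+q)^2\,\hat{d}_{n-1}(q)\;-\;(1+q)\,q^2\,\hat{d}_{n-2}(q), \qquad n\geq 4.
\]

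For the initial values, $\hat{d}_0=1$, $\hat{d}_1=[2]$, and $\hat{d}_2=[2]^2$ are immediate because the maximal shape has no box strictly above both middles, so Theorem~\ref{thm:LeDD} imposes nothing. For $n=3$, I would invoke the exceptional isomorphism $(D_3,1)\cong(A_3,2)$: the maximal shape is a $2\times 2$ rectangle, and applying Theorem~\ref{thm:LeA} subtracts from $[2]^4$ the weight of the unique forbidden configuration ($+$ in the northwest and southeast, $0$ in the northeast, southwest free), yielding $\hat{d}_3=[2]^4-q^2[2]$. Given the recurrence and these four values, the generating function $\hat{d}(x)=\sum_{n\geq 0}\hat{d}_n(q)\,x^n$ is determined by multiplying through by the characteristic polynomial of the recurrence and reading off the degree-$\leq 3$ numerator from the initial data; specializing $q=1$ then collapses the recurrence to $a_n=4a_{n-1}-2a_{n-2}$ with the stated initial values, matching OEIS sequence A007070.

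The hard part will be the recurrence step, and specifically the claim that the active contribution telescopes cleanly to $q^2\hat{d}_{n-2}(q)$. This requires simultaneously verifying that (i) fixing both extremes of $D$ to $+$ makes every $\Le$-constraint involving an extreme vacuous, and (ii) the restriction of the remaining constraints to the interior matches the $\Le$-conditions for a $(D_{n-2},1)$ maximal-shape diagram under the natural label shift. The key subtlety is that the two middle boxes of $(D_n,1)$ are conjugate to each other, not self-conjugate, so that no constraint pair $(b,b')$ involving a middle degenerates; once this is parsed correctly, the interior is genuinely a smaller doubled tail diamond with the matching conjugate pairing, and the recurrence closes.
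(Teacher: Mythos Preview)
Your proposal is correct and follows exactly the approach the paper takes: the paper simply asserts that ``for $n\geq 4$ we have the same recurrence as above'' (i.e., as in the $(B_n,1)$ case) together with the four initial values, and states that the proposition follows. You have fleshed out this sketch, supplying the combinatorial justification for the recurrence via Theorem~\ref{thm:LeDD} and the verification of the initial conditions (including the nice use of $(D_3,1)\cong(A_3,2)$), which the paper leaves entirely to the reader.
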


\subsection{Enumeration of $(B_n,n)$ \protect\Le-diagrams and
permutation tableaux}

\subsubsection{The total number of $(B_n,n)$ $\Le$-diagrams }

Let $B(n)$ be the number of type $(B_n,n)$ $\Le$-diagrams; equivalently,
it is the number of type $B$ decorated permutations on
$\{\pm 1, \dots , \pm n \}$.  Let $b(n)$ be the number of elements in
the Coxeter group of type $B_n$.  That is, $b(n)=2^n n!$.

\begin{proposition}
The sequence of numbers $B(n)$ is sequence A010844 from the Sloane
Encyclopedia of Integer Sequences \cite{Sloane}. The numbers are
given by the recurrence $B(0)=1$ and $B(n+1) = 2(n+1)B(n)+1.$
\end{proposition}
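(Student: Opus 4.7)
Proof Proposal.

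My plan is to compute the exponential generating function of $B(n)$ and then read the recurrence off by a short algebraic manipulation.

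First, I would parameterize a type $B_n$ decorated permutation as a triple $(\sigma,\epsilon,d)$: an underlying permutation $\sigma\in S_n$, a sign vector $\epsilon\in\{\pm 1\}^n$ (determining $\pi(i)=\epsilon_i\sigma(i)$), and a CW/CCW decoration $d$ on each positive fixed point of $\pi$. For each index $i\in[n]$ independently, the number of choices of $(\epsilon_i,d_i)$ is $3$ when $\sigma(i)=i$ (namely $\epsilon_i=1$ with CW, $\epsilon_i=1$ with CCW, or $\epsilon_i=-1$, in which case $\pi(i)=-i$ is not a fixed point) and $2$ otherwise. Summing yields
\[
B(n) \;=\; \sum_{\sigma\in S_n} 2^{\,n-f(\sigma)}\,3^{\,f(\sigma)},
\]
where $f(\sigma)$ is the number of fixed points of $\sigma$.

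Next, I would plug $x=3/2$ into the classical EGF $\sum_{n}\sum_{\sigma \in S_n} x^{f(\sigma)} z^n/n! = e^{(x-1)z}/(1-z)$ and rescale by $y=2z$, obtaining
\[
\sum_{n\geq 0} B(n)\,\frac{z^n}{n!} \;=\; \frac{e^{z}}{1-2z}.
\]
Expanding the right-hand side as a Cauchy product gives the closed form $B(n)=\sum_{k=0}^{n}(n)_k\,2^k$, where $(n)_k=n!/(n-k)!$ is the falling factorial.

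Finally, from this closed form, the identity $(n+1)(n)_k=(n+1)_{k+1}$ together with reindexing $m=k+1$ yields
\[
2(n+1)\,B(n)\;=\;\sum_{k=0}^{n}(n+1)_{k+1}\,2^{k+1}\;=\;\sum_{m=1}^{n+1}(n+1)_m\,2^m\;=\;B(n+1)-1,
\]
since the missing $m=0$ term equals $(n+1)_0\cdot 2^0=1$. This is the claimed recurrence, and $B(0)=1$ is immediate from the definition (the unique empty decorated permutation).

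The only place care is required is the combinatorial setup: one must verify that the $3$-versus-$2$ count honors the type $B$ decoration convention pairing CW at $i$ with CCW at $-i$, so that $3^{f(\sigma)}$ is the correct weight on fixed points. After that, the passage to the EGF and the falling-factorial manipulation are routine, and I do not anticipate a further obstacle.
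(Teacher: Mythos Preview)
Your proof is correct but takes a different route from the paper's. The paper decomposes a type $B$ decorated permutation by first choosing the set of $k$ clockwise fixed points (in $\binom{n}{k}$ ways) and then filling the remaining $n-k$ positions with an arbitrary signed permutation (in $b(n-k)=2^{n-k}(n-k)!$ ways, with any residual fixed points automatically counterclockwise). This gives $B(n)=\sum_{k}\binom{n}{k}b(n-k)$ directly, and the recurrence drops out of the one-line identity $\binom{n+1}{k}b(n+1-k)=2(n+1)\binom{n}{k}b(n-k)$. You instead decompose by the underlying unsigned permutation $\sigma\in S_n$, weight by fixed points, and pass through the classical EGF $e^{(x-1)z}/(1-z)$ to reach the same closed form $B(n)=\sum_{k}(n)_k 2^k$. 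Both arguments are sound; the paper's is shorter and entirely elementary, while yours yields the exponential generating function $e^{z}/(1-2z)$ as a byproduct and situates the count within standard permutation-statistic machinery.
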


\begin{proof}
A type $B$ decorated permutation is chosen via the following
procedure: first choose a number $k$ (where $0 \leq k \leq n$),
which will be the number of clockwise fixed points; then choose
their location in ${n \choose k}$ ways; finally, choose the
structure of the remainder of the permutation by choosing a normal
type $B$ permutation of size $n-k$ in $b(n-k)$ ways.  Therefore
$$B(n) = b(n)+{n \choose 1} b(n-1) + {n \choose 2} b(n-2) +\dots
                 + {n \choose n} b(0).$$

Since ${n+1 \choose k} b_{n+1-k} = 2(n+1) {n \choose k} b_{n-k}$, we have
 $B(n+1) =2(n+1)B(n)+1$.
\end{proof}

\subsubsection{The total number of $B_n$ permutation
tableaux} We say that a $0$ in a $\Le$-diagram is {\it restricted}
if it is on the diagonal or if there is a $+$ below it in the same
column.  We say that a row is restricted if it contains a restricted
$0$.  Let $t_{n,k,j}$ be the number of type $B_n$ permutation
tableaux with $k$ unrestricted rows and exactly $j$ $+$'s on the
diagonal. Let $T_n(x,y) = \sum_{k,j} t_{n,k+1,j} x^k y^j$.

\begin{proposition}\label{perm-tab}
$T_n(x,y)=(y+1)^n (x+1)(x+2)\dots (x+n-1)$.
\end{proposition}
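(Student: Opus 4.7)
The plan is to prove the formula by identifying $T_n(x, y)$ with a sum over the signed permutation group $B_n \cong (\mathbb{Z}/2\mathbb{Z})^n \rtimes S_n$, via the bijection $\Phi^B_3$ of Theorem~\ref{Bcommutative} between type $B_n$ permutation tableaux and $B_n$ signed permutations. Writing each such permutation as a pair $(\epsilon, \sigma)$ with $\epsilon \in \{\pm 1\}^n$ and $\sigma \in S_n$ the underlying unsigned permutation, the key claim is that under $\Phi^B_3$: (a) the number of diagonal $+$'s of $D$ equals the number of negative signs, $|\{i : \epsilon_i = -1\}|$; and (b) the number of unrestricted rows of $D$ equals the number of cycles $\mathrm{cyc}(\sigma)$ of the underlying unsigned permutation.

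Granting claims (a) and (b), the formula follows from the factorization
\begin{align*}
T_n(x, y)
&= \sum_{(\epsilon, \sigma)} x^{\mathrm{cyc}(\sigma) - 1}\, y^{|\{i : \epsilon_i = -1\}|} \\
&= \left(\sum_{\epsilon \in \{\pm 1\}^n} y^{|\{i : \epsilon_i = -1\}|}\right) \cdot \left(\sum_{\sigma \in S_n} x^{\mathrm{cyc}(\sigma) - 1}\right) \\
&= (y+1)^n \cdot (x+1)(x+2)\cdots(x+n-1),
\end{align*}
using the classical Stirling identity $\sum_{\sigma \in S_n} x^{\mathrm{cyc}(\sigma)} = x(x+1)(x+2)\cdots(x+n-1)$.

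The main obstacle is verifying claims (a) and (b) by tracing the definition of $\Phi^B_3$, which interprets a type $B_n$ $\Le$-diagram as a hook graph and extracts the corresponding decorated permutation by path-following. For (a), placing a $+$ in a diagonal box introduces precisely one sign flip in the wiring interpretation, so the diagonal-$+$ count coincides with the negative-sign count. For (b), one appeals to the type $A$ analogue (unrestricted rows of a type $A$ permutation tableau correspond to a cycle statistic of the associated permutation, via $\Phi_3$) and transfers it through the $\theta$-symmetric embedding $\iota_{\Le}: \Le^B(n) \hookrightarrow \Le(n, 2n)$, which descends cycles of the signed permutation to cycles of its underlying $S_n$-permutation. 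An alternative route is direct induction on $n$, establishing the recursion $T_n(x, y) = (y+1)(x+n-1)\, T_{n-1}(x, y)$ via a delicate case analysis on the fillings of the leftmost column of the staircase $Q^{j}$; the subtle bookkeeping arises because an unrestricted row of the reduced $(B_{n-1}, n-1)$ tableau can become restricted after adding a leftmost column with a $0$ lying above a $+$.
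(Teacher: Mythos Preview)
Your bijective strategy is appealing, but claim (b) is false for the specific bijection $\Phi^B_3$, so the argument breaks down. Here is a concrete counterexample for $n=2$. Take the full staircase shape and the $\Le$-diagram with $D(1,1)=0$, $D(1,2)=+$, $D(2,1)=+$ (both diagonal boxes are $+$). This is a permutation tableau with exactly one unrestricted row (row~$1$ contains a restricted $0$ above the $+$ in $(2,1)$). One computes $v(D)=s_1$ and $w=w_0^j$, so $\Phi^B_1(v,w)=v w^{-1}$ is the signed permutation $(-1,-2)$; its underlying unsigned permutation is the identity, which has two cycles. Thus the number of unrestricted rows does not equal $\mathrm{cyc}(\sigma)$ under $\Phi^B_3$. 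The two statistics are equidistributed (both are Stirling-distributed), which is why the generating-function identity you wrote is numerically correct, but they do not match pointwise under this bijection, and your factorization argument relies on pointwise equality together with the independence of $\epsilon$ and $\sigma$.

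Claim~(a) does appear to hold in the examples one checks, but you have not actually proved it; the one-line justification (``a diagonal $+$ introduces one sign flip'') needs a genuine argument tracing wires through the symmetric $n\times n$ wiring diagram. Even granting (a), without (b) the proof collapses.

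The paper takes an entirely different route: it proves the shifted recursion
\[
T_n(x,y)=(y+1)(x+1)\,T_{n-1}(x+1,y)
\]
directly, by analyzing how a $B_n$ permutation tableau is built from a $B_{n-1}$ one by appending either an empty row or a new leftmost column of length $n$, and counting how many of the previously unrestricted rows remain unrestricted. Note that this is \emph{not} the recursion $T_n(x,y)=(y+1)(x+n-1)T_{n-1}(x,y)$ you mention as an alternative; both recursions are compatible with the closed form, but only the shifted one reflects the combinatorics of the column-addition step, and the unshifted one would require a separate (and less obvious) combinatorial construction.
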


The strategy of this proof comes from an idea in \cite{CH}.

\begin{proof}
We will show that $T_n(x,y) = (y+1)(x+1)T_{n-1}(x+1,y)$. To prove
this, let us consider the process of building a type $B_{n}$
permutation tableau $D'$ from a type $B_{n-1}$ permutation tableau
$D$ of shape $O_w$.  Let $k$ be the number of unrestricted rows of
$D$. There are two possibilities: either we can add a new (empty)
row of length $n$ to $D$ (adding a north step to the border of
$O_w$), or we can add a new column $c$ of length $n$ to $D$ (adding
a step west to the border of the Young diagram).  The first
possibility
 is easy
to analyze:  $D'$ contains the same number of $+$'s on the diagonal
as $D$ and has one additional unrestricted row.

For the second possibility there are two cases:
either we will fill the bottom (diagonal) box of $c$ with a $0$
or we will fill it with a $+$.  In each case
all boxes in a restricted row must be filled with $0$'s, and the other
boxes may be filled with $0$ or $+$.  We
need to compute
how many ways there are to fill the boxes of $c$ such that the
resulting tableau $D'$ has $i$ unrestricted rows, where $i \leq k+1$.

In the first case, note that every $0$
above the bottom-most $+$ in $c$ is restricted.  Also the $0$
is the bottom (diagonal) box of $c$ is restricted.  Summing over
the position $a$ of the bottom-most $+$, the number of ways to fill
the boxes of $c$ such that $D'$ has $i$ unrestricted rows
is $\sum_{a=1}^k {a-1 \choose k-i} = {k \choose i-1}$.

In the second case, since the bottom (diagonal) box of $c$ is a $+$,
all $0$'s that we place in column $c$ are restricted.  Therefore
there are ${k \choose i-1}$ ways to fill the boxes of $c$ such
that the $D'$ has $i$ unrestricted rows.

Our arguments show that
\begin{equation*}
t_{n,i,j} = t_{n-1,i-1,j} + \sum_{k\geq i} {k \choose i-1} t_{n-1,k,j} +
   \sum_{k\geq i-1} {k \choose i-1} t_{n-1,k,j-1},
\end{equation*}
which implies that
$t_{n,i,j} = \sum_{k \geq i-1} {k \choose i-1} (t_{n-1,k,j}+t_{n-1,k,j-1})$.
It follows that
$T_n(x,y) = (y+1)(x+1)T_{n-1}(x+1,y)$.
\end{proof}

\subsubsection{Recurrences for type $(B_n,n)$ cells of maximal shape}

Let $b(n)$ be the number of type $(B_n,n)$ $\Le$-diagrams of maximal
(staircase) shape. Let $[i] = 1 + q + \cdots + q^{i-1}$ denote the
$q$-analogue of $i$ and let $[i]^{(j)}$ denote the $j$-th derivative
(with respect to $q$) of $[i]$.  We have the following recurrence
for $b(n)$.

\begin{proposition}
We have $b(0)=1$  and
\begin{equation*}
b(n) = [n+1]b(n-1) + q^2 \sum_{i=1}^{n-2} \frac{[n-1]^{(i)}}{i!}b(n-i-1).
\end{equation*}
\end{proposition}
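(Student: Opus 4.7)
The plan is to prove the recurrence by induction on $n$ via a case analysis on the structure of the top row of a maximal-shape $(B_n,n)$ $\Le$-diagram~$D$. Label the top-row boxes $T_1, T_2, \ldots, T_n$ from left to right; the rightmost box $T_n$ is the diagonal box in row $1$. By condition~(2) of Theorem~\ref{thm:LeB}, either $T_n = +$ or else $T_1 = \cdots = T_n = 0$. Removing the top row leaves a $(B_{n-1},n-1)$ maximal staircase; I will determine which configurations on this sub-diagram are compatible with a given top row using Theorem~\ref{thm:LeB} and then invoke induction.

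I would first handle the \emph{clean} configurations, in which the top row has the form $(0^a,\,+^{\,n-a})$ for some $a \in \{0,1,\dots,n\}$ (with $a=n$ being the all-zero row). For such a top row, condition~(1) applied to any $T_k = 0$ is vacuous, since all entries $T_{k'}$ with $k' < k$ are $0$; the sub-diagram may therefore be any $(B_{n-1},n-1)$ $\Le$-diagram. Summing the top-row weight $q^{n-a}$ over $a = 0, 1, \dots, n$ and multiplying by $b(n-1)$ gives $(1+q+\cdots+q^n)\,b(n-1) = [n+1]\,b(n-1)$, the first term of the recurrence.

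The remaining \emph{messy} configurations have some $T_k = 0$ with $a+2 \le k \le n-1$; let $K$ be the set of such positions, set $m = |K| \ge 1$, and suppose the first $+$ of the top row occurs at position $a+1$. The crucial observation is that for each $k \in K$, a $+$ in column $k$ strictly below the top row would, via condition~(1) applied to $T_k=0$, force $T_{a+1}=0$ and contradict $T_{a+1}=+$; hence column $k$ must be entirely $0$ throughout the sub-diagram, and I call such columns \emph{killed}. Moreover, killing column $k$ forces its sub-diagonal box (at sub-position $(n-k,k)$) to equal $0$, and condition~(2) then forces the entire sub-row $n-k$ to be $0$. The main technical step, and the principal obstacle, is to verify that after deleting the $m$ killed sub-rows and the $m$ killed sub-columns from the $(B_{n-1},n-1)$ staircase and relabeling what remains in order, the remaining cells form a $(B_{n-1-m},\,n-1-m)$ maximal staircase on which the restrictions of conditions~(1) and~(2) coincide with the $(B_{n-1-m},\,n-1-m)$ $\Le$-conditions. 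The essential combinatorial check is the duality $r \in R \iff n-r \in C$ between the remaining row set $R$ and column set $C$; from this one deduces that the $i$-th smallest remaining row contains exactly $n-m-i$ surviving cells, matching the $(B_{n-1-m},\,n-1-m)$ staircase, and that conditions triggered by killed cells are automatic since killed rows and columns are identically $0$.

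Granted this reduction, a messy configuration with parameters $(a,m,K)$ contributes $q^{n-a-m}\,b(n-1-m)$ by the inductive hypothesis, and the number of choices of $K$ for fixed $a$ and $m$ is $\binom{n-a-2}{m}$. Summing in $a$ and substituting $k = n-a-2$ gives
\begin{equation*}
\sum_{a=0}^{n-m-2}\binom{n-a-2}{m}q^{n-a-m} \;=\; q^2\sum_{k=m}^{n-2}\binom{k}{m}q^{k-m} \;=\; q^2\,\frac{[n-1]^{(m)}}{m!},
\end{equation*}
the last equality being immediate from term-by-term differentiation of $[n-1] = \sum_{k=0}^{n-2} q^k$. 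Reindexing $m$ as $i$ and combining with the clean contribution yields the stated recurrence.
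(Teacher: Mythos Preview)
Your proof is correct and follows essentially the same approach as the paper's: both classify by the top row, call a top-row $0$ lying to the right of a $+$ \emph{restricted} (your set $K$), observe that each restricted $0$ forces an entire column and the corresponding diagonal row to vanish, and then delete these to reduce to a smaller staircase. The paper's argument is terser---it asserts without detail that the deletion yields a $B_{n-i-1}$ $\Le$-diagram and that the $q$-count of top rows with $i$ restricted $0$'s is $q^2[n-1]^{(i)}/i!$---whereas you spell out the row/column duality, the shape check, and the derivative computation explicitly; but the underlying decomposition is identical.
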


\begin{proof}
This result is proved by considering the various possibilities for
the top row of the $\Le$-diagram.  Whenever there is a $0$ in the
top row which is to the right of some $+$ (let us call such a $0$
{\it restricted}), then every box below that $0$ must also be a $0$.
In a type B $\Le$-diagram, whenever there is a $0$ on the diagonal,
all boxes to its left must also be $0$'s. Therefore whenever we have
a restricted $0$ in the top row, say in column $i$, then the $i$-th
column and the $n+1-i$-th row of the $\Le$-diagram contain $0$'s.
If we delete this column and this row, the resulting diagram is a
$\Le$-diagram of (inverted) staircase shape of type $B_{n-1}$.

If we $q$-count the possible configurations for the top row of a
type $B_n$ $\Le$-diagram which have no restricted $0$'s, we will get
$[n+1]$. Deleting the top row of such a diagram leaves us with a
type $B_{n-1}$ $\Le$-diagram.

If we $q$-count the possible configurations for the top row of a
type $B_n$ $\Le$-diagram which have precisely $i$ restricted $0$'s,
where $i \geq 1$, we get $q^2\frac{[n-1]^{(i)}}{i!}$.  Deleting the
top row of the $\Le$-diagram as well as the $i$ columns and rows
corresponding to the $i$ restricted $0$'s leaves us with a type
$B_{n-i-1}$ $\Le$-diagram.
\end{proof}

\subsubsection{Preference Functions} \label{sec:pref}

Let $\B_n$ denote the set
of $(B_n,n)$ $\Le$-diagrams of maximal (staircase) shape such that
the bottom square contains a $+$.  The set of $(B_n,n)$
$\Le$-diagrams of maximal shape has twice the cardinality of $\B_n$,
since the bottom square of a $\Le$-diagram imposes no restrictions
on any other square.

\begin{definition}
A {\it preference function} of $n$ is a word of length $n$ where all
the numbers $1$ through $k$ occur at least once for some $k \leq n$.
\end{definition}

In other words, a preference function of $n$ lists the possible ways
that $n$ candidates may rank in a tournament, allowing ties.

\begin{theorem}\label{thm:prefB}
The set $\B_n$ is in bijection with the set of preference functions
of length $n$. Therefore the number of maximal type $(B_n,n)$
$\Le$-diagrams has twice the cardinality of the preference functions
of length $n$. This is sequence A000629 in the Sloane Encyclopedia
of Integer Sequences \cite{Sloane}.
\end{theorem}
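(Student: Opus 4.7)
The plan is to prove Theorem~\ref{thm:prefB} by constructing an explicit bijection between $\B_n$ and the set of preference functions of length $n$. By Theorem~\ref{thm:LeB}, a diagram $D \in \B_n$ is determined by its set of active rows $S = \{1 = r_1 < r_2 < \cdots < r_k\} \subseteq [n]$ (the rows whose diagonal box contains a $+$) together with the $+/0$ pattern inside each active row. Condition~(2) forces rows outside $S$ to be all $0$, and condition~(1) interrelates the row patterns: a $0$ sitting above a $+$ (in the same column) forces all entries to its left in its row to be $0$. Thus $D$ is encoded by the data $(S, (L_j)_{j=1}^{k})$, where $L_j$ records the leftmost $+$ column of row $r_j$ together with the binary choices at the ``free'' columns of $r_j$ --- those columns not forced to be $+$ by condition~(1).

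The bijection will be defined by a column-reading rule that assigns to each column $c \in [n]$ a block index $w_c \in \{1, \ldots, k\}$, producing a word $w_1 \cdots w_n$. The rule identifies, for each column $c$, the active row that ``first claims'' $c$ based on the positions of the leftmost $+$'s and the free-column patterns. I would verify that $w(D)$ uses every value in $\{1, \ldots, k\}$ (hence is a preference function) by showing that for each $j$, at least one column is assigned to block $j$ --- which follows from the $+$ at the diagonal box $(r_j, r_j)$ together with the constraints of condition~(1). I would then describe the inverse: given a preference function with ordered blocks $B_1, \ldots, B_k$, one reads off $S$ from the block structure and reconstructs the active-row patterns by inverting the assignment rule. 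Once $|\B_n|$ equals the number of preference functions of length $n$, the total count of maximal $(B_n,n)$ $\Le$-diagrams is $2|\B_n|$, because the entry at the bottom square (the minimum of $Q^n$) is free: it has no box to its left in its row and no box below it in its column, so neither condition of Theorem~\ref{thm:LeB} constrains it. The identification of this doubled count with A000629 is then a standard identification of the shifted Fubini numbers.

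The main obstacle will be designing the reading rule precisely so that $w(D)$ is always a preference function. A naive rule such as ``$w_c$ equals the index of the topmost active row having a $+$ in column $c$'' can fail when a later active row carries $+$'s in every column of its span, causing earlier block indices to be skipped and yielding a word that is not surjective onto $\{1, \ldots, k\}$. The refined rule I have in mind uses both the ``forced'' columns (where condition~(1) dictates the entry once the leftmost $+$ of each row is chosen) and the genuinely free columns to partition $[n]$ correctly into $k$ blocks. Verifying this requires a careful case analysis based on the $\Le$-conditions of Theorem~\ref{thm:LeB}, which should ensure both injectivity and surjectivity of the map; small-case checks (for $n=2, 3$) already yield the Fubini counts $3$ and $13$, giving a useful sanity test.
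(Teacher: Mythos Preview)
Your proposal outlines a plan but does not actually carry out a proof. The central object---the ``column-reading rule'' that is supposed to send a diagram $D$ to a preference function---is never defined. You describe what properties it should have, acknowledge that the na\"ive rule fails, and allude to a ``refined rule'' involving forced versus free columns, but you do not write it down. Without an explicit rule there is nothing to check: you cannot verify that every block index in $\{1,\ldots,k\}$ is hit, and you cannot construct the inverse. The phrases ``I would verify'' and ``I would then describe'' signal that the argument is still at the design stage. The structural observations you make (that rows with a diagonal $0$ are all zero, that the bottom box is unconstrained, that condition~(1) creates forced entries) are correct, but they do not by themselves determine a bijection.

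For comparison, the paper does not attempt a direct map from diagrams to preference functions. Instead it composes two bijections that are each easy to write down explicitly. First, the bijection $\Phi^B_3$ of Theorem~\ref{Bcommutative} (already established) sends $\B_n$ to the set $J_n$ of type~$B_n$ decorated permutations whose nonexcedances occupy positions $1,\ldots,n$ and with no fixed point in position~$n$. Second, an explicit map $\alpha: J_n \to \{\text{preference functions of length }n\}$ is defined using the sign pattern and the nonexcedance structure of the signed permutation; its inverse is written down directly. The advantage of routing through $J_n$ is that the combinatorics becomes one-dimensional (a signed word) rather than two-dimensional (a staircase filling), which makes both $\alpha$ and $\alpha^{-1}$ short to describe and to check. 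If you want to pursue your direct approach, you will need to produce a rule of comparable explicitness and then verify bijectivity; otherwise, passing through signed permutations is the cleaner route.
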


Theorem \ref{thm:prefB} follows from Lemmata \ref{lem:BJ} and
\ref{lem:alpha} below.
First recall that $\Phi^B_3$ gives a bijection between type $B_n$
$\Le$-diagrams of maximal shape and type $B_n$ decorated permutations
such that the nonexcedances are in positions $1, \dots , n$. So
in particular, any fixed points that occur are clockwise.
(Since they are all oriented the same way, we will ignore this
orientation from now on.)
Let $J_n$ denote the set of type $B_n$ decorated permutations
such that the nonexcedances are in positions $1, \dots, n$,
and such that there is never a fixed point in position $n$.
Restricting
$\Phi^B_3$ to $\B_n$ gives the following result.
\begin{lemma}\label{lem:BJ}
$\Phi^B_3$ is a bijection from $\B_n$ to $J_n$.
\end{lemma}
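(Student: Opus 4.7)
The plan is to use Theorem \ref{Bcommutative}, which gives a bijection $\Phi^B_3 : \Le^B(n) \to \D^B_n$, and verify that it restricts to a bijection $\B_n \to J_n$ by matching the two defining conditions on each side.

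For the first condition, I would establish that the shape $O_w$ of a type $(B_n,n)$ $\Le$-diagram determines the positions of the nonexcedances and weak excedances of $\Phi^B_3(D)$ --- the type $B$ analogue of the remark following Theorem \ref{thm:commuteA}. This follows by unwinding the reflective construction of $\Phi^B_3$ via $\iota_\Le$ together with the type $A$ wiring-diagram description of $\Phi_3$. For the maximal staircase shape, the corresponding element $w = w_0^n$ has one-line notation $[-n,-(n-1),\dots,-1]$ and places all $n$ nonexcedances at the positive positions $\{1,\dots,n\}$, matching the first condition defining $J_n$.

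For the second condition, I identify the bottom square of $D$ as the unique minimum of the poset $Q^n$, labeled $s_n$. Being first in every reading order, this box corresponds to the rightmost letter of the reduced expression $\w$ for $w_0^n$. By the greedy description of the positive distinguished subexpression (\cite[Lemma 3.5]{MR}, as recalled in the proof of Proposition \ref{prop:greedyLe}), the bottom square is $+$ iff $v(D)\,s_n > v(D)$ in Bruhat order. Since $s_n$ embeds as the transposition $(1,-1)$ under $\iota: B_n \to S_{\{\pm 1,\dots,\pm n\}}$, this Bruhat inequality is equivalent to $v(D)(1) > 0$. Using $w_0^n = [-n,-(n-1),\dots,-1]$ we read off $(w_0^n)^{-1}(n) = -1$, and hence
\begin{equation*}
\pi(n) \;=\; v(D)\bigl((w_0^n)^{-1}(n)\bigr) \;=\; v(D)(-1) \;=\; -\,v(D)(1).
\end{equation*}
Therefore the bottom square of $D$ is $+$ iff $v(D)(1) > 0$ iff $\pi(n) < 0$, which, together with the nonexcedance condition $\pi(n) \leq n$ established in the previous step, matches the no-fixed-point-at-$n$ condition defining $J_n$.

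The main obstacle is the first step --- transferring the ``shape determines nonexcedance positions'' statement from type $A$ to type $B$ through $\iota_\Le$. Once this is in place, the rest of the argument is a routine manipulation of the greedy PDS and the explicit one-line form of $w_0^n$ as a signed permutation.
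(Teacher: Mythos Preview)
Your approach is considerably more detailed than the paper's, which simply states the lemma as the immediate result of restricting the bijection $\Phi^B_3$. Your computation for the second condition is correct and useful: the bottom box is the poset minimum, hence the rightmost letter of $\w$; the greedy PDS criterion gives that this box is $+$ iff $v(D)s_n>v(D)$ iff $v(D)(1)>0$; and since $(w_0^n)^{-1}(n)=-1$, this is equivalent to $\pi(n)<0$.

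There is, however, a genuine gap in your final sentence. You assert that ``$\pi(n)<0$ together with $\pi(n)\le n$ matches the no-fixed-point-at-$n$ condition defining $J_n$.'' But $\pi(n)<0$ is strictly stronger than $\pi(n)\neq n$. For $n=2$, take the maximal-shape $\Le$-diagram with bottom box $0$ and the other two boxes $+$; then $v(D)=s_2=[-1,2]$ and $\pi=v(w_0^2)^{-1}=[-2,1]$. Here $\pi(2)=1$ is not a fixed point, so $\pi\in J_2$ by the literal definition, yet the diagram is not in $\B_2$. Thus $\Phi^B_3(\B_n)\subsetneq J_n$ as literally defined, and your argument does not establish the stated bijection.

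What you have actually proved is that $\Phi^B_3$ restricts to a bijection from $\B_n$ onto $\{\pi:\text{nonexcedances at }1,\dots,n\text{ and }\pi(n)<0\}$. This is precisely the set on which the map $\alpha$ of Lemma~\ref{lem:alpha} is well-defined (its proof uses ``no fixed point at $n$'' to deduce $n\notin I^+$, i.e.\ $\pi(n)<0$), so your characterization is exactly what the subsequent argument needs. In other words, the discrepancy lies in the paper's phrasing of $J_n$ rather than in your reasoning; you should either note this or simply define $J_n$ by the condition $\pi(n)<0$ and prove the bijection onto that set.
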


We now define a bijection $\alpha$ from $J_n$ to preference functions
of length $n$.
Let $\pi \in J_n$.  The preference function
$p=(p_1,\dots, p_n):=\alpha(\pi)$ is
defined as follows.
Let $I^+$ be the
set of indices $i$ of $\pi$ where $\pi(i)>0$.
The entries of $\pi$ in positions $I^+$ will tell us about the
``repeated" entries in the preference function.
Let $K$ be the complement of the set
$I^+ +1 :=\{i+1 \ \vert \ i\in I^+\}$ in $\{1, \dots , n\}$.
Let $S_\pi$ be the sequence that we obtain if we take the sequence
of negative entries of $\pi$, forget the signs, and then
use the relative order of the entries to extract a permutation
on $\{1,\dots,m\}$ for $m \leq n$.
We now put the entries of $S_\pi$ (in order) into
the entries $p_k$ where $k\in K$.  Finally, looking at each
$i+1\in I^+ + 1$ in increasing order, we define $p_{i+1}:=p_{\pi(i)}$.

\begin{example}
Suppose $n = 9$ and
$\pi = (-6,  -8, -3 ,-1 , -9 , 5 , -7, 4, -2)$.
Then $I^+=\{6,8\}$, $K=\{1,2,3,4,5,6,8\}$,
and $S_\pi=(4,6,3,1,7,5,2)$, and the preference function
is
$\alpha(\pi)= (4,6,3,1,7,5,7,2,1)$.
\end{example}

\begin{lemma}\label{lem:alpha}
The map $\alpha$ is a bijection from $J_n$ to the set of preference functions
of length $n$.
\end{lemma}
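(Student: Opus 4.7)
The plan is to construct an explicit inverse $\beta$ to $\alpha$ and verify the two maps are mutually inverse. A key observation underlying both directions is that the first-occurrence positions of the word $p = \alpha(\pi)$ are exactly $K$, while positions in $I^+ + 1$ always carry values that have already appeared earlier; this follows inductively from $p_{i+1} = p_{\pi(i)}$ and $\pi(i) \leq i$. In particular $p$ necessarily uses every value $1, \dots, m$ (since $S_\pi$ is a permutation of these placed at positions in $K$) and hence is a preference function. The well-definedness of $\alpha$ itself uses the condition on $\pi \in J_n$: writing $m$ for the number of negative entries, $n \notin I^+$ (since $\pi(n) < 0$), so $|K| = n - |I^+| = m$, the placement of $S_\pi$ onto $K$ is a genuine bijection, and each $p_{\pi(i)}$ is already defined when needed by induction on $i \in I^+$.

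To describe $\beta$, I would start with a preference function $p$ using values $\{1, \dots, m\}$. Let $K$ be its set of first-occurrence positions (so $1 \in K$ automatically), set $I^+ := \{j - 1 : j \in \{1, \dots, n\} \setminus K\}$, and read off $S := (p_{k_1}, \dots, p_{k_m})$, where $k_1 < \dots < k_m$ enumerate $K$. Then $S$ is automatically a permutation of $\{1, \dots, m\}$. For each $i \in I^+$ in increasing order, assign $\pi(i)$ to be the unique $j \in \{1, \dots, i\}$ with $p_j = p_{i+1}$ not already used as $\pi(i')$ for any $i' < i$ in $I^+$. For each remaining position $i \in \{1, \dots, n\} \setminus I^+$, the negative entry $\pi(i) < 0$ is determined by matching $S$ (viewed as a rank sequence) to the sorted complement $\{1, \dots, n\} \setminus \pi(I^+)$.

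The main obstacle is showing the greedy assignment of $\pi(i)$ is forced at each step. This reduces to the identity that the number of positions $j \leq i$ with $p_j = v := p_{i+1}$ equals $1 + |\{i' \in I^+ : i' < i,\ p_{i'+1} = v\}|$ (one first occurrence in $K$, plus later repeats), while the number of such positions already used by $\pi$ equals $|\{i' \in I^+ : i' < i,\ p_{\pi(i')} = v\}|$; since $p_{i'+1} = p_{\pi(i')}$ throughout, these two counts differ by exactly $1$, so exactly one unused candidate remains. Granting this, checking $\alpha \circ \beta = \mathrm{id}$, $\beta \circ \alpha = \mathrm{id}$, and that $\beta(p)$ lies in $J_n$ (namely, is a signed permutation with all nonexcedances in positions $1, \dots, n$ and $\pi(n) < 0$) is a direct unwinding of the definitions.
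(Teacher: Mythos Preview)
Your proposal is correct and follows essentially the same strategy as the paper: construct an explicit inverse to $\alpha$. The only notable difference is in how the inverse assigns $\pi(i)$ for $i\in I^+$: the paper sets $\pi(i)$ to be the \emph{closest} occurrence of $p_{i+1}$ to the left of position $i+1$, whereas you characterize it as the unique unused position $j\le i$ with $p_j=p_{i+1}$ and supply a counting argument for uniqueness. These two descriptions coincide (the closest prior occurrence is always the unique unused one, since each earlier occurrence $j_s$ of the value has already been consumed by $\pi(j_{s+1}-1)$), so the approaches are equivalent; your version has the mild advantage that the uniqueness count simultaneously certifies that the $\pi(i)$ are distinct.
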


\begin{proof}
Since no permutation in $J_n$ has a fixed point in position $n$,
$I^+ + 1$ is a subset of $\{1,\dots , n\}$ as it should be.  Also,
the definition $p_{i+1}:=p_{\pi(i)}$ makes sense because $\pi(i)\leq
i$ by the condition on nonexcedances of $\pi$.  Therefore $\alpha$
is well-defined.

To show $\alpha$ is a bijection we will define its inverse.
 Let $K$ be the set of indices corresponding
to the first occurrence of each positive integer in
the preference function $p$.   $K$ includes $1$,
so we can reconstruct the set $I^+$ as $K^c - 1 = \{k-1 \ \vert \
k \in K^c \}$, where $K^c$ is the complement of $K$ in
$\{1,\dots , n\}$.
Now for each entry $a$ in $p$ (say in position $i+1$)
which is not the first occurrence of $a$,
we look at the closest occurrence of $a$ to the left of
position $i+1$.  Say it occurs in position $i' \leq i$.  Then
we set $\pi(i)=i'$; note that the nonexcedance condition is
satisfied.  Let $T$ be the set of all such $i'$ and
let $T^c$ be the complement of $T$.
We now complete our reconstruction of $\pi$
by placing the elements of $T^c$ in the unfilled positions
of $\pi$ in the same relative order as the first occurrences
of entries of $p$, and then negating their signs.
\end{proof}

\subsection{Enumeration of $(D_n,n)$ \protect\Le-diagrams}

In this section we show that the set $\D_n$ of maximal type
$(D_n,n)$ $\Le$-diagrams is in bijection with a distinguished subset
of preference functions, the {\it atomic} preference functions.  A
preference function is {\it atomic} if no strict leading subword
consists of the only occurrences in the word of the letters $1$
through $j<k$.

\begin{theorem}\label{th:mainD}
Atomic preference functions of length $n$ are in bijection with
maximal type $(D_n,n)$ $\Le$-diagrams.
Therefore the cardinality of the set of maximal
type $(D_n,n)$ $\Le$-diagrams is given by sequence A095989
from the Sloane Encyclopedia of Integer Sequences \cite{Sloane}.
\end{theorem}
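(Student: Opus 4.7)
The plan is to parallel the proof of Theorem \ref{thm:prefB}, establishing the bijection via two intermediate steps analogous to Lemmata \ref{lem:BJ} and \ref{lem:alpha}.

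As a first step, I would construct a type $D$ analog of the commutative diagram in Theorem \ref{Bcommutative}. Using the embedding $\delta\colon D_n \hookrightarrow S_{\{\pm 1, \ldots, \pm n\}}$ into even signed permutations (introduced in the proof of Theorem \ref{thm:LeD}), together with the wiring-diagram interpretation of $\Phi^B_3$, one obtains a bijection $\Phi^D_3$ between maximal $(D_n,n)$ $\Le$-diagrams and a set $\D^D_n$ of ``type $D$ decorated signed permutations'' --- namely even signed permutations with appropriate nonexcedance and decoration data. The verification is analogous to the proof of Theorem \ref{Bcommutative}, tracking how the $2 \times 2$ block structure that arises from the simple generator $s_n$ in the wiring diagram (as described in the proof of Theorem \ref{thm:LeD}) interacts with the even-signed-permutation constraint.

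The second step is to compose $\Phi^D_3$ with a type $D$ variant $\alpha^D$ of the bijection $\alpha$ from Lemma \ref{lem:alpha}, giving a map $\gamma$ from maximal $(D_n,n)$ $\Le$-diagrams to preference functions of length $n$. The theorem then reduces to showing that the image of $\gamma$ is exactly the atomic preference functions. The key combinatorial claim, which I would prove by analyzing the unique atomic factorization $p = p^{(1)} * \cdots * p^{(r)}$ of a preference function $p$ (where $*$ denotes concatenation with letter relabeling), is that this factorization lifts under $\alpha^{-1}$ to a block decomposition of the associated signed permutation. Each atomic block should contribute a constant parity of negative entries, so that the evenness constraint defining $\mathrm{im}\,\delta$ translates into a parity constraint on $r$; matching this with the $(D_n,n)$-specific wiring diagram conditions then forces $r = 1$, i.e.\ atomicity of $p$.

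The main obstacle will be this parity analysis and the verification of the block decomposition, which I expect to be a careful but tractable bookkeeping exercise using the explicit inverse construction in the proof of Lemma \ref{lem:alpha} together with the definition of concatenation. Once the bijection is established, the identification of the enumerating sequence with A095989 follows from the generating function identity $g(x) = 1 - 1/f(x)$, relating the ordinary generating functions of preference functions and atomic preference functions, which in turn is equivalent to unique factorization of preference functions into atomic pieces.
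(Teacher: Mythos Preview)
Your approach has a genuine gap at both steps, and the paper's argument is structurally quite different.

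For the first step, the parallel with Theorem~\ref{Bcommutative} breaks down because the embedding $\delta\colon D_n\hookrightarrow S_{\{\pm 1,\ldots,\pm n\}}$ is \emph{not} a Bruhat embedding (this is noted explicitly in the proof of Theorem~\ref{thm:LeD}). In the type $B$ case the entire argument rests on $\iota$ preserving Bruhat order, so that type $B$ $\Le$-diagrams embed as type $A$ $\Le$-diagrams via diagonal reflection and the bijections $\Phi_i$ simply restrict to fixed points of an involution. No such restriction argument is available here: reflecting a $(D_n,n)$ diagram does not produce a type $A$ $\Le$-diagram, the $s_n$ generator expands to a $2\times 2$ block, and the image of $\mathcal I^j$ under $\delta$ is not cut out by a symmetry of the type $A$ index set. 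So you would have to construct $\Phi^D_3$ from scratch and prove bijectivity directly, which is the whole content of the theorem.

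The second step is more seriously flawed. Your parity argument cannot force $r=1$: even granting that each atomic factor contributes a fixed parity of negative entries (which you have not justified), an evenness constraint on the total would at best force $r$ to have a fixed parity, not force $r=1$. More fundamentally, the relationship between preference functions and atomic preference functions is governed by the multiplicative identity $f(x)=1/(1-g(x))$, not by a parity selection; the counts do not stand in any simple ratio that a parity argument could detect.

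The paper's proof proceeds entirely differently. It defines $\Phi\colon\mathcal D_n\to\mathcal A_n$ directly via a wiring diagram on the staircase (adding a $*$-diagonal, reading off an unsigned permutation, then assigning signs according to which rows of $D$ are all-zero), composed with $\alpha$. This $\Phi$ in fact extends to all $\oplus$-diagrams and always lands in atomic preference functions, so surjectivity is easy; the entire difficulty is injectivity. That is handled by building an explicit inverse $\Psi\colon\mathcal A_n\to\mathcal D_n$ through a recursive path-placement algorithm (constructing paths $P_n,P_{n-1},\ldots,P_1$ one at a time, each pushed as far northwest as possible subject to several case distinctions), and then proving via a simultaneous induction (Propositions \ref{P:well-defined}--\ref{P:Le}) that the output satisfies the three $(D_n,n)$ $\Le$-conditions and that every choice in the algorithm is forced. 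The $\Le$-conditions enter essentially in this uniqueness argument, not through any group-theoretic parity.
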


Let $\mathcal A_n$ be the set of atomic preference functions of
length $n$.
For $D \in\mathcal D_n$ we let $0_R(D)$
denote the set of indices $i$ such that row $i$ of $D$ is
completely filled with $0$'s.

We will prove Theorem \ref{th:mainD}
by describing two maps between these sets and showing that they are inverse
to each other.

First we describe $\Phi: \mathcal D_n \to \mathcal A_n$, and then
$\Psi: \mathcal A_n \to \mathcal D_n$ which will be the inverse.

$\Phi$ is defined as follows.  Embed the $\Le$-diagram $D$ into a
staircase shape with $n$ rows by adding a diagonal to $D$ which is
filled entirely with $*$'s. Label the west side (north side) of the
staircase with the numbers from $1_W$ to $n_W$ ($1_N$ to $n_N$), as
in the following diagram:

$$\tableau*[sbY]{\bl&\bl 5_N&\bl 4_N&\bl 3_N&\bl 2_N&\bl 1_N\\\bl 1_W&&&&&*%
\\ \bl 2_W&&&&$*$\\ \bl 3_W&&&* \\ \bl 4_W&&*\\ \bl 5_W&*\\}$$

Turn each $+$ and each $*$ into an $\textelbow$  and each $0$ into a
crossing $\textcross$. This will turn our diagram into a wiring
diagram which gives a permutation $\pi = \pi(D)$ (where paths $i
\mapsto \pi(i)$ travel from the west to the north border). We now
add signs to $\pi$, making the $i$th entry positive if  $i \in
0_R(D)$; and otherwise negative. Clearly this signed permutation is
an element of the set $J_n$ defined in  Section \ref{sec:pref}; in
fact, the map we have described is essentially the map $\Phi^B_3$.
We now define $\Phi(D):=\alpha(\pi(D))$, where $\alpha$ is the map
used in Section \ref{sec:pref}.

\begin{proposition}
$\Phi(D)$ is an atomic preference function for $D\in \D_n$.
\end{proposition}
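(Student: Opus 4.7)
The plan is to prove the proposition by contradiction: assume $p := \Phi(D)$ fails to be atomic, so there exist $m < n$ and $j < \max(p)$ with $\{p_1,\dots,p_m\}=\{1,\dots,j\}$ and no letter $\le j$ appearing in $p_{m+1},\dots,p_n$, and derive a violation of one of the $\Le$-conditions of Theorem~\ref{thm:LeD}.

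First I would unpack the non-atomic hypothesis through the definition of $\alpha$. Recall that first occurrences of letters in $p$ appear exactly at positions in $K = \{1,\dots,n\}\setminus(I^+ + 1)$, where $I^+ = 0_R(D)$, and that at these positions the values are the entries of $S_\pi$ in order, while $p_{i+1}=p_{\pi(i)}$ for each $i \in I^+$. The closed-prefix hypothesis therefore forces: (a) the first $j$ positions of $K$ all lie in $[1,m]$; (b) the first $j$ entries of $S_\pi$ are a permutation of $\{1,\dots,j\}$; and (c) for every $i \in I^+$ with $i \ge m$, $\pi(i) > m$. Since wires in the embedded staircase always satisfy $\pi(i) \le i$ when $\pi(i) > 0$, condition~(c) at $i=m$ already forces $m \notin I^+$, so row $m$ of $D$ contains at least one $+$.

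Next I would translate (a)--(c) into geometric constraints on the wiring diagram of the embedded $D$. For each all-zero row $i$ with $m < i \le n-1$ the wire from $i_W$ travels east to the diagonal $*$ at column $i_N$, turns north, and must exit at a column with label strictly greater than $m$; an analogous statement holds for the wire from $n_W$ (whether or not $n-1 \in I^+$). Combining this with (a) and (b), the wires from the top $m$ rows and the wires from the bottom $n-m$ rows must route independently: the former use only columns with labels $\le m$, the latter only columns with labels $> m$. This block decomposition forces all cells of $D$ in a certain lower-left rectangle to be $0$, while the wires in the lower rows require $+$'s in prescribed positions further right in order to exit the leftmost columns.

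Finally I would match this forbidden configuration to one of the patterns of Theorem~\ref{thm:LeD}. The rightmost $+$ in row $m$, combined with the all-zero block below and a $+$ needed to route the wire from $n_W$ out of column $n_N$, produces either a $0$ with a $+$ strictly southwest exactly $d+1$ rows south (violating condition~(2)) or the three-$+$-and-one-$0$ pattern of condition~(3). The main obstacle---and the heart of the argument---is the careful bookkeeping to identify the precise distance-$d$ parameter and determine which of conditions~(2) or~(3) is violated; this hinges on a case split according to whether row $n-1$ is all zero (so that the copy rule $p_n = p_{\pi(n-1)}$ applies) or not, and on locating the routing $+$'s whose existence is forced by the independent-routing conclusion.
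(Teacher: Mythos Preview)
Your setup is sound through the point where you extract conditions (a), (b), (c) and conclude $m\notin I^+$, i.e.\ that row $m$ of $D$ contains a $+$. But the final two paragraphs take a wrong turn, and the gap is structural rather than a matter of missing details.

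The paper's argument never invokes the $\Le$-conditions of Theorem~\ref{thm:LeD} at all. From the block decomposition $\{|\pi(1)|,\dots,|\pi(m)|\}=\{1,\dots,m\}$ (which does follow from your (a)--(c), though this deserves a line of justification), one argues directly on the wiring: the wire from $1_W$ exits in a column of label $\le m$, so every box of row~$1$ in a column of label $>m$ is a $0$; inductively the same holds for rows $2,\dots,m$. Since row $m$ of $D$ consists \emph{entirely} of boxes with column labels $>m$, row $m$ is all zeros, contradicting $m\notin I^+$. That is the whole proof. The paper even remarks that this works for an arbitrary $\oplus$-diagram of maximal shape, not just for $\Le$-diagrams.

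This last observation shows why your plan to locate a forbidden $\Le$-pattern cannot succeed as stated. Because $\Phi$ lands in atomic preference functions for \emph{every} $\oplus$-diagram, the non-atomic hypothesis already yields a contradiction before any $\Le$-condition is available to violate. Concretely, your argument refers to ``the rightmost $+$ in row $m$'', but the wiring consequence above forces row $m$ to be all zeros, so that $+$ does not exist; the zero region you need is the \emph{upper}-left rectangle (rows $1,\dots,m$, columns with labels $>m$), not a lower-left one. You have in fact already assembled both halves of the contradiction (row $m$ has a $+$; row $m$ is all zeros) --- you just need to put them together instead of searching for a pattern from Theorem~\ref{thm:LeD}.
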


\begin{proof}
Suppose that $\Phi(D)$
is not atomic.  This means that there is a proper leading subword
(say of length $j$) of $\Phi(D)$ which consists of all occurrences of the
numbers $1$ through $r$
for some positive $r$.  Recalling the definition of $\alpha$,
this means that any negative entry of $\pi:=\pi(D)$ after position $j$
has greater absolute
value than any negative entry of $\pi(D)$ in the first $j$ positions.
Furthermore, if for any $i \in I^+$ we have $\pi(i) \leq j$
then $i+1 \leq j$.  This implies that if we ignore signs, the first
$j$ entries of $\pi(D)$ form a permutation of $S_j$, and $\pi(j)$
is negative.

Now note that since $|\pi(1)| \leq j$,  the first $n-j$ entries
in the first row of $D$ must be zero.  Similarly, since
 $|\pi(2)| \leq j$,  the first $n-j$ entries of the second
row of $D$ must be zero.  Continuing, since
$|\pi(j)| \leq j$, the first $n-j$ entries of the $j$th row
of $D$ must be zero, i.e. all entries in the $j$th row of
$D$ are zero.  But this means that $\pi(j)>0$, a contradiction.
\end{proof}

We remark that this proof did not use the forbidden patterns of
type $(D_n,n)$ $\Le$-diagrams in any way.  In fact, we can define
$\Phi$ for {\it any}
$\oplus$-diagram $D$, and $\Phi(D)$ will be an atomic preference function;
this is a many-to-one map.  What we need to prove next is that
when we restrict $\Phi$ to the set of type $(D_n,n)$ $\Le$-diagrams,
we get a bijection to the set of atomic preference functions.

\subsubsection{Inverse bijection}
%For convenience we will think of the diagonal boxes as filled with
%$*$'s which are replaced by elbows in the wiring diagram.  Thus a
%$+$ always refers to a $+$ in a non-diagonal box.

We shall refer to the type $(D_n,n)$ $\Le$-conditions as the first,
second and four-box pattern.  We remark that a consequence of our
labeling of the columns and rows is that if the 0 involved in the
second or four-box $\Le$-pattern is in column $k$ then the lowest
$+$ involved in the $\Le$-pattern is in row $k$.

We will construct the inverse map $\Psi: \mathcal A_n \to \mathcal
D_n$ recursively.  Recall from Lemma \ref{lem:alpha} that given a
preference function $f$, we can already construct the signed
permutation $w(f):=\alpha^{-1}(f)$. Let $\w :=|w(f)|$.

We will first construct the path $P_n = n_N \to \w^{-1}(n)_W$, then
the path $P_{n-1} = (n-1)_N \to \w^{-1}(n-1)_W$, then $(n-2)_N \to
\w^{-1}(n-2)_W$, and so on.  The general idea here is that each path
$P_i$ will travel as close to the northwest border of the staircase
as possible.  This idea will be made precise in the form of an
algorithm in the following
paragraphs.

Let $D_i = \cup_{j=i}^n P_j$ denote the
set of boxes used by $P_i, P_{i+1}, \ldots, P_n$ so that $D_1$ is
completely filled in. Abusing notation, we also use $D_i$ to
denote the corresponding staircase shape partially filled with $0$'s
and $+$'s (diagonal boxes are always filled in with $*$'s).  Let
$C_b(D_i), R_a(D_i)$ denote the $b$-th column and $a$-th row of
$D_i$.  We say a row or a column is {\it complete} if
all its boxes have been filled in.

Let $i^* = \w^{-1}(i)$ so that $P_i$ goes from $i$ to $i^*$.  A path
$P_i$ is completely determined by its set $P_i^+$ of boxes
containing $+$'s which are linearly ordered according to the order
in which they are visited. Our construction of $P_i^+$ will always
have the form
$$
P_i^+ = (c_1, c_1', c_2, c_2', \ldots, c_k, c_k', c_{k+1}',
c_{k+\ell-1}')
$$
where $c_j \in P_i^+ \cap (D_i - D_{i+1})$ and $c_j' \in P_i^+ \cap
D_{i+1}$.  In other words, the primed boxes are old, while the
unprimed ones are newly added.  In our notation it is possible for
$\ell = 0$, or in other words, $c_k$ is the last $+$ on $P_i^+$.

We now give the construction of $P_i$ by describing $c_1, c_2,
\ldots, c_k$.
It may be helpful for the reader to look at Example \ref{ex:atomic}
alongside the description of this algorithm.

Given $c_1, c_2, \ldots, c_{j-1}$, it is clear that
$c'_{j-1}$ is determined.  Suppose $c'_{j-1} = (a,b)$ is in row $a$
and column $b$.  If $j = 1$ we set $c'_{j-1} = (a,b) = (0,i)$.  If all rows
below row $a$ have been filled in then we are already done: the path
$P_i$ is determined.

 Otherwise, let $a'$ (if it exists) be the highest row (smallest number) below $a$ and above row $b$ which
contains a $+$ and let $c^* = (a',b')$ (if it exists) be the
rightmost $+$ in row $a'$.  We have a number of mutually exclusive
cases:

\begin{enumerate}
\item[(Z)] Suppose one of the following holds:
\begin{enumerate}
\item $c^*$ does not exist and $i^* \geq b$.
\item all the rows below row $a$ have been filled in $D_{i+1}$.
\item $c^*$ exists and is equal to $(i^*,b)$.
\item $c^*$ exists and $i^* > a'$ and all the rows below and including
$a'$ are complete in $D_{i+1}$.
\end{enumerate}
Then $c_{j-1}$ is already the last new $+$ in $D_i$ and the rest of
the $+$'s on $P_i$ are determined by $D_{i+1}$.

\item[(A)]
If $c^*$ exists and $i^* < a'$, or $c^*$ does not exist and $i^* <
b$ we set $c_j = (i^*,b)$, and
\begin{equation}\label{E:T1}
\text{$c_j$ will be the last $+$ of $P_i$.}
\end{equation}

%\item[B]
%Suppose $c^*$ exists.  If $i > a'$ and all the rows of $D_{i+1}$
%below and including $a'$ are complete, then $c_{j-1}$ is already the
%last new $+$ in $D_i$ and the rest of the $+$'s on $P_i$ are
%determined by $D_{i+1}$.

%\item[(B)]
%Suppose $c^*$ exists and either (a) we have $i^* = b$ and $w^{-1}(i)
%> 0$
%or (b) we have $i^* > b$ and $R_b(D_{i+1})$ is completely filled.
%Then $c_{j-1}$ is already the last new $+$ on $P_i$, and:
%\begin{align}\label{E:T2}
%&\text{the path $P_i$ will visit the diagonal square $(i^*,i^*)$
%and} \\ \label{E:T22}&\text{either (a) exit at $i^*_W$ or (b) the
%rest of $P_i$ is determined by $D_{i+1}$.}
%\end{align}

\item[(B)]
Suppose $c^*$ exists, $i^* = b$ and either $w^{-1}(i)
> 0$ or $R_b(D_{i+1})$ is filled with $0$'s.  Then $c_{j-1}$ is already the last new $+$ on $P_i$, and:
\begin{align}\label{E:T2}
&\text{the path $P_i$ will visit the diagonal square $(i^*,i^*)$ and
exit at $i^*_W$.}
\end{align}

\item[(C)]
Suppose $c^*$ exists.  If $i^* > b$, $w(b) < 0$ and
$R_{b}(D_{i+1})$ is filled with 0's apart from one box then we set
$c_j = (b,b')$. Then:
\begin{align}\label{E:T3}
&\text{the path $P_i$ will visit the diagonal square $(b,b)$ and
then turn at $c_j$;} \\
\label{E:T33} &\text{also $c_j$ is the last new $+$ of $D_i$.}
\end{align}

\item[(D)]
In all other cases we set $c_j = (a',b)$.  Then:
\begin{equation}\label{E:T4}
\text{the path $P_i$ will turn at $c_j$ and head to $c'_j = c^*$.}
\end{equation}
\end{enumerate}

Note that only in Case D does one have to continue constructing
$c_{j+1}$.  The construction of $P_i$ typically involves multiple
instances of Case D, followed by one instance of another case.

We will call a $+$ placed via Case C a {\it special} $+$ and a $+$
placed via Case D a {\it normal} $+$.  Let us call a $+$ inside some
$D_i$ a {\it corner} $+$ if (a) its row is not yet complete, and (b)
it is the rightmost $+$ in its row. We claim that

\begin{prop}\label{P:well-defined}
The algorithm described above is well-defined.  More precisely,
\begin{enumerate}
\item
the construction gives paths $P_i$ which go from $i_N$ to $i^*_W$,
\item
the positions of the new $+$'s $c_j$ are empty in $D_{i+1}$,
\item
no $+$'s are encountered while going from $c_{j-1}'$ to $c_j$,
\item
the stated facts \eqref{E:T1}, \eqref{E:T2}, \eqref{E:T3},
\eqref{E:T33} and \eqref{E:T4} hold.
\end{enumerate}
Furthermore, in Cases A,B,C,D each $c_j'$ of the form $c^*$ in the
algorithm is a corner $+$ of $D_{i+1}$.
\end{prop}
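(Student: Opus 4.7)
My plan is to prove Proposition \ref{P:well-defined} by a double induction: the outer induction runs on $i$ descending from $n$ down to $1$, and the inner induction runs on $j$ through the construction of each individual path $P_i$. The key is to strengthen the inductive hypothesis so that $D_{i+1}$ itself satisfies enough structural invariants to guarantee the case analysis of the algorithm is exhaustive and unambiguous. The essential invariants to carry are: (a) $D_{i+1}$ satisfies the restrictions of the $(D_n,n)$ $\Le$-conditions on its filled boxes, (b) the corner $+$'s of $D_{i+1}$ occur in a ``staircase''-like pattern in which, between two consecutive corner $+$'s in rows $a' < a''$, the rectangular region strictly south of row $a'$, strictly north of row $a''$, and weakly west of the rightmost corner column is filled entirely with $0$'s, and (c) the set of complete rows of $D_{i+1}$ is precisely $\{i+1, i+2, \ldots, n\} \setminus 0_R$ for a set $0_R$ determined by the previously processed portion of the preference function.

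For the outer base case, $D_{n+1}$ is empty and $P_n$ is determined directly from $n^\ast = w(f)^{-1}(n)$ and the staircase geometry. For the inductive step, I fix $i$ and assume $D_{i+1}$ satisfies (a)--(c). For the inner induction on $j$, the boundary position $c'_{j-1} = (a,b)$ lies either at the top of column $i$ (when $j=1$) or at the previously constructed corner $+$. I then argue that the ``next corner $+$'' $c^* = (a',b')$ below row $a$ and above row $b$, when it exists, is well-defined by invariant (b), and that the column $b$ between rows $a$ and $a'$ is either empty or filled with $0$'s. This immediately gives claims (2) and (3) of the proposition: the candidate position $c_j$ is unfilled, and the straight-line traversal from $c'_{j-1}$ to $c_j$ (and then, in Case D, horizontally to $c^*$) crosses no $+$'s.

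The verification of claim (4) requires a case-by-case check of (A), (B), (C), (D). Cases A and D are straightforward from invariant (b): the path either exits the staircase at row $i^\ast$ or turns at the newly placed $c_j$ into the pre-existing $c^*$, which by induction is a corner $+$ (thus verifying the final clause of the proposition as well). Case Z handles the termination conditions where no new $+$'s are needed because either $P_i$ exits horizontally through completed rows, or the path reconnects to existing corner $+$'s of $D_{i+1}$. Case B handles the diagonal exit: here the hypotheses $i^\ast = b$ together with either $w^{-1}(i) > 0$ or $R_b(D_{i+1})$ being all zero ensure that the boxes between $c'_{j-1}$ and the diagonal $*$ at $(i^\ast, i^\ast)$ are empty and that no forbidden pattern is created.

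The main obstacle will be Case C, where we place a \emph{special} $+$ at $(b,b')$ so that the path first visits the diagonal square $(b,b)$ and then turns. Here one must check simultaneously that no $+$ lies between $c'_{j-1}$ and the diagonal, that no $+$ lies between $(b,b)$ and $(b,b')$ apart from the lone exception noted in the hypothesis, and crucially that placing the special $+$ does not create a second or four-box $\Le$-pattern. This last point is delicate: it requires showing that the column $b'$ above row $a'$ contains no $+$ that would (together with the special $+$ and $c^*$) produce a four-box configuration, and this is exactly where the atomicity of the preference function $f$, transported through $\alpha^{-1}$ into the sign structure of $w(f)$, is used. Once Case C is settled, the remaining invariants (a), (b), (c) for the updated diagram $D_i = D_{i+1} \cup P_i$ follow by inspection, closing both inductions and yielding the proposition.
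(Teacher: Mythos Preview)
Your high-level strategy --- proving the proposition by induction on $i$ with a strengthened inductive hypothesis consisting of structural invariants on $D_{i+1}$, then running the case analysis through Z, A, B, C, D --- is exactly what the paper does. However, your specific invariants and your account of where atomicity enters are both off, and these are not merely cosmetic.

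The paper does not carry your invariants (a), (b), (c). It proves Proposition~\ref{P:well-defined} \emph{simultaneously} with five companion propositions (\ref{P:cols} on column structure, \ref{P:rows} on row completeness, \ref{P:corner} on the geometry of corner $+$'s, \ref{P:int} on at most one new box per row, and \ref{P:Le} on $\Le$-avoidance), assuming all six for $D_{i+1}$ and re-establishing all six for $D_i$ in one pass. Your (a) is Proposition~\ref{P:Le}, but your (b) is both weaker and less precise than the pair \ref{P:cols}/\ref{P:corner}: the paper needs that every column with index $\geq i$ is either completely filled or contains a unique corner $+$ below which every filled box lies in a complete row, and that all boxes weakly northwest of any corner or normal $+$ are filled. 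These are what force $(a',b)$ to be empty and the stretch from $c'_{j-1}$ to $c_j$ to be free of $+$'s; your (b), phrased in terms of rectangular regions of $0$'s between consecutive corner $+$'s, does not yield this and is in tension with special $+$'s placed in Case~C (which sit in complete rows, not corner rows). Your (c) is not what the paper uses; the relevant row invariant is Proposition~\ref{P:rows}, whose content is different.

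Your claim that atomicity of $f$ is invoked in Case~C to rule out a four-box pattern is incorrect. Atomicity is not used anywhere in the proof of Proposition~\ref{P:well-defined}; it enters only later, in Lemma~\ref{lem:0}. The $\Le$-check in Case~C follows from Propositions~\ref{P:cols} and~\ref{P:rows} together with~\eqref{E:zeros}, and several of the termination and exit arguments (Cases Z(a), Z(d), and the claim in Case~C that all rows below $R_b$ are complete) are handled instead by wire-counting: one counts how many of the exits $(b+1)_W,\ldots,n_W$ have already been used and deduces which rows must be full. So your plan has the right skeleton, but you need to replace (b) and (c) by the sharper column/row/corner invariants of Propositions~\ref{P:cols}--\ref{P:int} and substitute wire-counting for the appeal to atomicity before the induction will close.
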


 Proposition \ref{P:well-defined} will be proved simultaneously with
the following propositions.
\begin{prop} \label{P:cols}
Let $C = C_b(D_i)$ be a column of an intermediate diagram $D_i$.
\begin{enumerate}
\item If $b < i$, then $C$ is empty.
\item If $C$ contains a corner $+$, say $c$, then this corner $+$ is unique.
Every filled square of $C$ below $c$ belongs to a complete row.
Every filled square to the right of $c$ belongs to a complete
column.
\item If $b \geq i$ and $C$ does not contain a corner $+$, then $C$
is completely filled in.
\end{enumerate}
\end{prop}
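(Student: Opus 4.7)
The plan is to prove Propositions \ref{P:well-defined} and \ref{P:cols} simultaneously by downward induction on $i$, running from $i = n+1$ (where $D_{n+1} = \emptyset$ and both statements hold vacuously) down to $i = 1$. In the inductive step, we assume both propositions for $D_{i+1}$ and must verify them after appending the path $P_i$ to obtain $D_i$.

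For the well-definedness of $P_i$ (Proposition \ref{P:well-defined}), I would trace through the case analysis of the algorithm step by step. When the algorithm looks for $c^*$, the rightmost $+$ in row $a'$ weakly left of column $b$ in $D_{i+1}$, Proposition \ref{P:cols}(2)-(3) applied to $D_{i+1}$ forces a clean local picture: either $c^*$ exists as a unique corner $+$ whose column below it is filled only inside complete rows and whose row to its right is filled only inside complete columns, or the relevant row is completely filled and no $c^*$ is needed. This clean geometry is exactly what makes the proposed position $c_j$ empty in $D_{i+1}$, and makes the vertical segment from $c'_{j-1}$ down to $c_j$ and the horizontal segment from $c_j$ leftward to $c^*$ free of existing $+$'s. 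The claims \eqref{E:T1}--\eqref{E:T4} about the eventual endpoint of $P_i$ at $i^*_W$ then follow by direct inspection of the relevant case.

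For Proposition \ref{P:cols} on $D_i$, property (1) is automatic because $P_i$ enters at column $i$ and the algorithm only visits columns $b \geq i$, so columns of smaller index remain empty. For (2)-(3), each Case D step has an exchange-like effect on corner $+$'s: the new $c_j$ in column $b$ becomes the unique corner $+$ of column $b$ in $D_i$, while the old corner $+$ $c^*$ in column $b'$ loses corner status because $c_j$, placed to its right in row $a'$, is now the rightmost $+$ of that row. The filled squares below $c_j$ in column $b$ consist of $0$'s lying in complete rows, inherited from $D_{i+1}$ via the induction hypothesis, and similarly for the horizontal picture. The terminating cases (Z, A, B, C) either complete a column or a row, or install a final corner $+$, preserving (2)-(3). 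The main obstacle will be the special $+$ of Case C, where the path visits the diagonal $(b,b)$, turns at $c_j = (b, b')$, and terminates: the $(D_n,n)$-specific behavior at the diagonal, together with the branching of the Dynkin diagram, requires careful verification that row $b$ becomes complete in $D_i$ (so that $c_j$ is \emph{not} a corner $+$) and that column $b'$ receives an appropriate updated corner. The precise Case-C hypothesis that $R_b(D_{i+1})$ is filled with $0$'s apart from one box is exactly what forces this row completion; once Case C (and the analogous diagonal exit of Case B) is handled, the remaining cases are routine.
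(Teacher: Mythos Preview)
Your overall architecture matches the paper: a simultaneous downward induction on $i$, with $D_{n+1}$ as the base case, verifying the claims after each path $P_i$ is appended.  However, the paper does \emph{not} carry just Propositions \ref{P:well-defined} and \ref{P:cols} through the induction; it proves Propositions \ref{P:well-defined}, \ref{P:cols}, \ref{P:rows}, \ref{P:corner}, \ref{P:int} and \ref{P:Le} all at once, and the inductive step for any one of them draws on the others.  Your sketch leans on several of these implicitly.  For instance, the ``clean local picture'' you invoke --- that $c^*$ sits north-east of the next corner along a Young-diagram staircase, that everything weakly north-west of a corner $+$ is already filled, and that a complete row which is not all $0$'s forces all lower rows to be complete --- is precisely the content of Proposition \ref{P:corner}(1)(2) and Proposition \ref{P:rows}(1), the latter in turn resting on Proposition \ref{P:int} via a wire-counting argument.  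Proposition \ref{P:cols} alone is column-based and does not by itself yield the row-wise facts you need to locate $c^*$, to certify that $(a',b)$ is empty, or to argue that in Case C row $b$ becomes complete.

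So the gap is not in the idea but in the strength of the induction hypothesis: with only \ref{P:well-defined} and \ref{P:cols} available for $D_{i+1}$, the Case-D exchange step and especially the Case-C diagonal argument cannot be closed.  Once you enlarge the packet to include \ref{P:rows}, \ref{P:corner} and \ref{P:int} (and, as the paper does, \ref{P:Le}), your outline becomes essentially the paper's proof.
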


\begin{prop}\label{P:rows}
Let $R = R_a(D_i)$ be a row of an intermediate diagram $D_i$.
\begin{enumerate}
\item
If $R$ is complete then $R$ is either completely filled with 0's or
all rows below $R$ are also complete.
\item
If $R$ contains a corner $+$, then the exit $a_W$ has been used
 by a path $P_j$ for some $j \geq i$.
\end{enumerate}
\end{prop}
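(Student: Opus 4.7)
The plan is to prove Proposition \ref{P:rows} as part of a simultaneous downward induction on $i$ together with Propositions \ref{P:well-defined} and \ref{P:cols}, starting at $i=n+1$ where $D_{n+1}$ is empty and all three claims hold vacuously. Assuming the three propositions hold for $D_{i+1}$, I deduce them for $D_i$; below I focus on the row claims, making essential use of the other two propositions at $D_{i+1}$.

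For part (1), suppose $R=R_a(D_i)$ is complete. If $R_a(D_{i+1})$ was already complete, then part (1) applied to $D_{i+1}$ gives that either $R$ is entirely filled with $0$'s (a condition unchanged when passing to $D_i$) or every row below $a$ is complete in $D_{i+1}$ and hence in $D_i$. Otherwise $P_i$ supplied the last boxes of $R$. Since $P_i$ is a monotone southwest path built from vertical and horizontal segments joining successive $c_j$'s, a column-by-column inspection using Proposition \ref{P:cols} inductively shows that $P_i$ can complete a previously partial row only by terminating with a long horizontal segment into the west border, which forces $i^*=a$. Then every row strictly below $a$ meeting $P_i$ is crossed only within a single column $b\le a$; by Proposition \ref{P:cols}(2) at $D_{i+1}$ each such row was either already complete or had its remaining unfilled boxes all lying in that column, and these are exactly the boxes supplied by the vertical segment of $P_i$.

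For part (2), a corner $+$ of $R_a(D_i)$ either was already a corner $+$ of $D_{i+1}$ — in which case the inductive hypothesis furnishes a path $P_j$ with $j\ge i+1$ exiting at $a_W$ — or was placed by $P_i$ itself. In the latter situation, if the $+$ was placed via Case (A) then \eqref{E:T1} combined with $c_j=(i^*,b)$ in row $a=i^*$ guarantees that $P_i$ exits at $a_W$; if via Case (C), the conclusions \eqref{E:T3} and \eqref{E:T33} again give the exit at $a_W$; in both cases we take $j=i$. The remaining possibility is Case (D), where $P_i$ places a new $+$ at $c_j=(a',b)$ and continues west to $c^*=(a',b')$. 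Here I invoke the final sentence of Proposition \ref{P:well-defined} at $D_{i+1}$: $c^*$ is a corner $+$ of $D_{i+1}$, so by the inductive hypothesis of part (2) applied at $D_{i+1}$, some path $P_j$ with $j\ge i+1$ exits at $a'_W$, and since the newly placed $+$ at $(a',b)$ lies in row $a'=a$, this is the required exit.

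The main obstacle is closing the Case (D) argument in part (2): I need to know that $c^*$ really was a corner $+$ of $D_{i+1}$ so that the inductive hypothesis applies to it. This is exactly the content of the last sentence of Proposition \ref{P:well-defined}, whose verification in turn relies on Proposition \ref{P:cols} (to locate the unfilled boxes of row $a'$ relative to $c^*$) and on a careful reading of the negations of Cases (Z), (A), (B), (C) that isolate Case (D). This is precisely the inter-dependence that forces all three propositions to be pushed through as a single simultaneous induction rather than one at a time.
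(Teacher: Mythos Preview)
Your argument for part (1) has a genuine gap. The central claim — that $P_i$ can complete a previously partial row $R_a$ only by ``terminating with a long horizontal segment into the west border, which forces $i^*=a$'' — is false. In Case C the path places a $+$ at $(b,b')$, completing row $b$ (which had exactly one empty box in $D_{i+1}$), yet $i^*>b$; and in Case D the new $+$ at $(a',b)$ can be the last empty box of row $a'$, completing it, while again $i^*>a'$ (the paper explicitly notes this possibility). So your geometric dichotomy does not hold, and the follow-up sentence about rows ``strictly below $a$'' becomes incoherent once $i^*\neq a$.

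The paper's route is entirely different: it proves part (1) by a wire-counting argument that relies on Proposition \ref{P:int} (each $P_j$ contributes at most one box to any given row, except in Case B where it fills an entire row with $0$'s). If $R_a$ is complete and not all $0$'s, then enough wires have been drawn that every exit below row $a$ save one is already used, forcing every row below to be complete. This is why the simultaneous induction in the paper carries \emph{six} propositions, not three; by omitting \ref{P:int} and \ref{P:corner} from your induction package you have deprived yourself of exactly the tool that makes part (1) go through.

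Your part (2) argument is largely sound for Cases A and D, but your treatment of Case C is wrong: you claim \eqref{E:T3} and \eqref{E:T33} give the exit at $a_W=b_W$, whereas in fact the path exits at $i^*_W$ with $i^*>b$. The conclusion survives only because the $+$ placed in Case C is never a corner $+$ — its row becomes complete — but establishing that requires the same counting argument you are missing.
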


\begin{prop}\label{P:corner}
In any intermediate diagram $D_i$,
\begin{enumerate}
\item
all boxes weakly to the north-west of a corner $+$ or a normal $+$
are filled in; and all boxes strictly northwest of any $+$ are
filled in,
\item
corner $+$'s are arranged from north-east to south-west, forming the
corners of a (English notation) Young diagram.
\end{enumerate}
\end{prop}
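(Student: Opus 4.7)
The plan is to prove Proposition \ref{P:corner} as part of a single simultaneous downward induction on $i$ that establishes Propositions \ref{P:well-defined}, \ref{P:cols}, \ref{P:rows} and \ref{P:corner} together. The base case is $i = n+1$ where $D_{n+1} = \emptyset$, and every clause of every proposition holds vacuously. The inductive step assumes all four propositions for $D_{i+1}$ and must deduce them for $D_i$, where $D_i$ is obtained from $D_{i+1}$ by appending the path $P_i$ constructed according to Cases (Z), (A), (B), (C), (D) in the algorithm. The mutual dependence is unavoidable: the well-definedness statements of \ref{P:well-defined} need \ref{P:cols} and \ref{P:rows} for $D_{i+1}$, while \ref{P:corner} for $D_i$ uses both those propositions for $D_{i+1}$ and \ref{P:well-defined} for $D_i$ that was just established.

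For the inductive step of Proposition \ref{P:corner}(1), I would break the verification into three pieces: (a) boxes strictly northwest of a $+$ that was already present in $D_{i+1}$ remain filled, which is automatic because cells only get added as $i$ decreases; (b) each new normal $+$ placed as $c_j = (a',b)$ in Case D has all weakly-northwestern boxes filled in $D_i$, because the path $P_i$ itself has just traveled south through column $b$ from row $a$ down to row $a'$, while the inductive \ref{P:corner}(1) applied to the old corner $+$ at $c^* = (a', b')$ takes care of everything strictly west of column $b$ and strictly above row $a'$; (c) the special $+$ placed in Case C and the last $+$ placed in Case A are handled by the same pattern, invoking in addition the hypotheses on $R_b(D_{i+1})$ (in Case C) and on $w^{-1}(i)$ (in Case A) to show that the diagonal square and the remaining row entries have been or will be filled. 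For part (2), the Young-diagram arrangement of corner $+$'s is maintained because the new normal $+$'s $c_1, \ldots, c_k$ appear at strictly increasing rows and strictly decreasing columns (each $c_j$ is forced into the column of $c_{j-1}'$, and the algorithm selects the highest rightmost available corner), any old corner $+$ that is traversed as some $c_{j-1}'$ gains a $+$ strictly to its east on the same row and thereby ceases to be a corner, and the surviving old corners together with the new ones still form an English-notation staircase.

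The main obstacle will be the intricate case-by-case bookkeeping, especially when Case D is invoked repeatedly before terminating in one of (Z), (A), (B) or (C): one must simultaneously track (i) which rows the path has just completed (relevant for \ref{P:rows}(1) and hence for \ref{P:corner}(1) in subsequent iterations), (ii) which columns are now devoid of corner $+$'s (relevant for \ref{P:cols}(3)), and (iii) how the creation or destruction of corner $+$'s reshapes the staircase of \ref{P:corner}(2). The subtlest point is Case C, where the special $+$ is inserted into an existing row rather than being entered from above the path; here one must verify both that the diagonal square $(b,b)$ and the sole non-zero entry of $R_b(D_{i+1})$ are compatible with the path exiting at $i^*_W$, and that the resulting configuration of corner $+$'s still satisfies the interlacing of \ref{P:corner}(2). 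Once these two cases are handled with care, the remaining arguments are direct consequences of the combinatorial hypotheses packaged by \ref{P:cols} and \ref{P:rows} at stage $i+1$.
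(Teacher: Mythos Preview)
Your approach is essentially the paper's own: it also proves Proposition~\ref{P:corner} by a single simultaneous downward induction on $i$, with the same vacuous base case and the same case-by-case analysis of (Z), (A), (B), (C), (D) in the inductive step. One small correction: the paper runs the induction over all six propositions (\ref{P:well-defined}, \ref{P:cols}, \ref{P:rows}, \ref{P:corner}, \ref{P:int}, \ref{P:Le}) at once, not just the four you list; in particular it derives \ref{P:rows}(1) from \ref{P:int} via a wire-counting argument, so you should fold \ref{P:int} into your simultaneous scheme as well.
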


\begin{prop}\label{P:int}
The set of new squares $P_i \cap (D_i - D_{i+1})$ contains at most
one box in each row, except in Case B when one has a row completely
filled with 0's.
\end{prop}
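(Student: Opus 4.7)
The plan is to prove Proposition \ref{P:int} by induction on the iteration count $j$ of the algorithm that constructs $P_i$, carried out simultaneously with the inductions proving the structural invariants \ref{P:well-defined}, \ref{P:cols}, \ref{P:rows}, and \ref{P:corner}. The driving observation is that in each iteration, the path moves through a column segment and then a row segment, and the previously-completed rows and columns (tracked by the invariants) absorb most of the traversed squares into $D_{i+1}$.

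First I would handle the generic Case D. Going south from $c'_{j-1} = (a,b)$ to $c_j = (a',b)$, the intermediate squares $(r,b)$ with $a < r < a'$ either lie in a complete row (and hence in $D_{i+1}$ by Proposition \ref{P:cols}(2)) or are empty in $D_{i+1}$ and get newly filled with $0$'s by $P_i$; any such new squares share column $b$ and therefore lie in distinct rows. Continuing along row $a'$ from $c_j$ to $c'_j = (a',b')$, I would use Proposition \ref{P:corner}(2) to argue that $c'_{j-1}$ and $c'_j$ are adjacent corners in the staircase of corner $+$'s of $D_{i+1}$, so no corner $+$ of $D_{i+1}$ lies strictly between their columns; Proposition \ref{P:cols}(3) then implies all such intermediate columns are complete in $D_{i+1}$, making $c_j$ the unique new square in row $a'$. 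Since successive Case D iterations visit strictly larger row indices, rows from different iterations cannot collide.

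The terminal Cases A, C, and Z are handled similarly. In Case A, after placing $c_j = (i^*, b)$ as the last $+$, the path goes west along row $i^*$; here Proposition \ref{P:corner}(1) (for columns containing a subsequent corner $+$) and Proposition \ref{P:cols}(3) (for all other columns) together show that every square traversed west of $c_j$ in row $i^*$ already lies in $D_{i+1}$, so $c_j$ is the sole new square in row $i^*$. In Case C, the hypothesis that $R_b(D_{i+1})$ is filled with $0$'s apart from one box pins the lone empty square of row $b$ to be $(b, b')$, so the path contributes only $c_j = (b,b')$ to row $b$; the diagonal square $(b,b)$ traversed by the path is itself one of the already-filled $0$'s. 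Case Z contributes no new $+$ at all, and its tail lies entirely in $D_{i+1}$ by the same completeness invariants.

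The genuine exception is Case B. There the path visits the diagonal $(b,b)$ and exits at $b_W$, traversing all of row $b$. If $R_b(D_{i+1})$ is already filled with $0$'s, no new squares are added in row $b$ and the bound of one new square per row persists. In the complementary sub-case $w^{-1}(i) > 0$, row $b$ is empty in $D_{i+1}$ and is filled with new $0$'s by the path's westward traversal, which is precisely the exceptional \emph{row completely filled with $0$'s} in the statement. The main obstacle is verifying this last sub-case: one must show that the path places only $0$'s (no $+$'s) in row $b$ — this is guaranteed because Case B's own defining conditions force the path to make no turn inside row $b$ other than at the diagonal $(b,b)$ — and that no other row of $P_i$ acquires a second new square, which reduces to the same column/corner arguments used for the descending segment preceding the diagonal.
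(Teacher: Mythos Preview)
Your approach matches the paper's: a simultaneous induction through the case analysis of the algorithm, with Proposition~\ref{P:int} verified case by case using the structural invariants of Propositions~\ref{P:cols}, \ref{P:rows}, and~\ref{P:corner}. (The paper itself treats \ref{P:int} as one of the ``straightforward'' properties in each case and does not spell out much more than you have.)

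Two small inaccuracies, both easily repaired. First, in Case~Z it is not true that the tail lies entirely in $D_{i+1}$: in sub-cases Z(a), Z(c), Z(d) the descent in column $b$ from $c'_{j-1}$ can pass through empty boxes, which become new $0$'s. But these lie in distinct rows by exactly the argument you gave for the Case-D descent, so the one-per-row bound still holds. Second, in Case~B with $w^{-1}(i)>0$ you assert that row $b$ is empty in $D_{i+1}$; this need not be so, since complete columns $k>b$ may already meet row $b$ in a $0$. What you actually need, and what holds, is that row $b$ contains no $+$'s in $D_{i+1}$: a corner $+$ there would (by Proposition~\ref{P:rows}(2)) force exit $b_W$ to have already been used, contradicting $i^*=b$. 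This ensures the westward leg fills row $b$ entirely with $0$'s, giving the exceptional row in the statement.
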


\begin{prop} \label{P:Le}
There are no violations of the type $(D_n,n)$ $\Le$-condition in any
intermediate diagram $D_i$.
\end{prop}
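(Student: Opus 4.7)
The plan is to prove Proposition \ref{P:Le} together with Propositions \ref{P:well-defined}, \ref{P:cols}, \ref{P:rows}, \ref{P:corner}, and \ref{P:int} by a single downward induction on $i$, starting from the empty diagram $D_{n+1}$ (where all six statements are vacuous) and ending at $D_1$. This simultaneous induction is essentially forced, because verifying the $\Le$-condition for $D_i$ uses the structural information about corner $+$'s (Proposition \ref{P:corner}), complete rows and columns (Propositions \ref{P:cols}, \ref{P:rows}), and single-box-per-row additions (Proposition \ref{P:int}) supplied by the other propositions at stage $i+1$, while those in turn are checked using the $\Le$-structure of $D_{i+1}$.

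Fix $i$ and assume all six propositions hold for $D_{i+1}$. The step from $D_{i+1}$ to $D_i$ introduces new $+$'s at boxes $c_1,\ldots,c_k$ (together with the terminal $+$'s in Cases A and C) and fills the remaining unfilled boxes visited by $P_i$ with $0$'s. I will verify each of the three forbidden $\Le$-patterns (first pattern, second pattern, four-box pattern of Theorem \ref{thm:LeD}) separately, by assuming such a pattern appears in $D_i$ and locating the newly-introduced squares that participate in it; by the inductive hypothesis, no such pattern appears entirely within $D_{i+1}$.

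For the first pattern (a $0$ above a $+$ with a $+$ further west), the only way it can appear in $D_i$ but not in $D_{i+1}$ is for the upper $0$ to lie on $P_i \setminus D_{i+1}$, or for the upper/western $+$ to be one of the $c_j$. In each of the Cases A, B, C, D, Propositions \ref{P:corner} and \ref{P:cols} force all boxes strictly northwest of each $c_j$ (and of every corner $+$) to be already filled, and consecutive $+$'s on $P_i$ sit in the \emph{same} row between turns; combined with the rules of the algorithm this eliminates the configuration. For the second pattern, I will use Proposition \ref{P:int} (at most one new $+$ per row) together with the precise vertical offset $d+1$ occurring in the definition of the pattern to show that the pattern would force either $c^*$ to violate the induction or the algorithm to have taken Case A or Case C instead of Case D.

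The main obstacle, as expected, is the four-box pattern. The entire purpose of introducing the \emph{special} $+$ in Case C is to avoid creating exactly this pattern: the hypotheses of Case C ($c^*$ exists, $i^* > b$, $w(b) < 0$, and $R_b(D_{i+1})$ contains a single $+$) are precisely the conditions under which Case D would place $c_j$ in a position forming a four-box pattern together with the unique $+$ already in row $b$, the corner $+$ $c^*$, and an appropriate $0$ along the path. I will verify that rerouting through $(b,b')$ both fills the diagonal box $(b,b)$ (killing the $0$ that would play the role of $c$) and places the new $+$ flush against the existing corner $+$ $c^*$ in a way compatible with the four-box-pattern-free Young-diagram corner structure of Proposition \ref{P:corner}. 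A dual analysis, using the cases (a)--(d) of Case Z and \eqref{E:T1}--\eqref{E:T4}, then rules out the four-box pattern in the remaining cases. The proofs of Propositions \ref{P:well-defined}--\ref{P:int} for $D_i$ will fall out along the way, since each follows directly from the case analysis once the positions of the new $+$'s are understood.
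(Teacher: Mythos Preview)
Your overall framework---a simultaneous downward induction on $i$ through all six Propositions \ref{P:well-defined}--\ref{P:Le}---is exactly what the paper does, and the reasons you give for why the induction must be simultaneous are correct.

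Where you diverge is in the organization of the inductive step. You propose to hold the algorithm case fixed in the background and verify the three $\Le$-patterns one at a time; the paper does the opposite, walking through Cases Z, A, B, C, D of the algorithm and, within each case, checking all three $\Le$-patterns for the newly-placed boxes. The paper's order is cleaner because the geometry of the new $+$'s and the surrounding $0$-regions is determined entirely by which case you are in; for instance, in Case A the paper immediately observes that the entire rectangle $\{(c,d)\mid a<c\le i^*,\ b<d\le n\}$ is filled with $0$'s, which kills the first and four-box patterns simultaneously, and then gives a more delicate wire argument (splitting into sub-cases (i) and (ii)) for the second pattern. Your pattern-by-pattern scheme can be made to work, but you will end up re-deriving these $0$-regions inside each pattern check.

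One substantive correction: your account of Case C is off. The special $+$ at $(b,b')$ is not introduced to dodge the four-box pattern. Case C is a \emph{routing} device: if one performed Case D instead (placing a $+$ at $(a',b)$), the wire $P_i$ would be forced to exit at $b_W$, contradicting $i^*>b$; this is exactly what the paper uses later when proving $\Psi$ is injective. The $\Le$-verification in Case C is handled differently from what you sketch: one shows (via Propositions \ref{P:cols} and \ref{P:rows}) that the rectangle $\{(c,d)\mid b\ge c>a,\ n\ge d>b\}$ is already all $0$'s in $D_{i+1}$, and combined with the $0$'s of \eqref{E:zeros} this isolates the new $+$ at $(b,b')$ from every pattern; then the only new $0$ with a $+$ to its left is $(a',b)$, which is easily checked. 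So your plan is sound at the skeletal level, but the narrative you attach to Case C would lead you to look for a four-box configuration that is not really the obstruction.
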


\begin{proof}[Proof of Propositions \ref{P:well-defined}, \ref{P:cols}, \ref{P:rows}, \ref{P:corner}, \ref{P:int} and \ref{P:Le}.
]  All the claims hold when no paths have been added.  Let us assume
that $D_{i+1}$ has been constructed and that all statements hold. We
shall show that $D_i$ satisfies all these conditions.

We first note:
\begin{enumerate}
\item
Proposition \ref{P:cols}(1) is obvious.
\item
A corner $+$ is always normal.
\item
Proposition \ref{P:rows}(1) follows from Proposition \ref{P:int}
and the following
wire-counting argument.  Let $n$ be the number of rows below and
including $R$.  If $R$ is not completely filled
with $0$'s then $n-1$ wires will travel from the north through row
$R$.  Let $R'$ be any row below $R$, say of length $\ell-1$.
Then there are  $\ell$ exits below and including $R'$ of which at least
$\ell-1$ must have been used, so $\ell-1$ wires exit to the
left below $R'$ and hence must enter and occupy every square of $R'$.
\end{enumerate}
Suppose $P_i$ has been constructed up to $c_{j-1}' = (a,b)$ where
$c_{j-1}'$ might mean the ``entrance'' $i_N = (0,i)$.  In our
explanations we will assume that $c_{j-1}' \neq (0,i)$ -- i.e.\
that $j>1$.  (Note that the
special case $c_{j-1}' = (0,i)$ is easier.)  We may assume
(inductively) that $i^* > a$ and that $c_{j-1}'$ is a corner $+$ in
$D_{i+1}$.

Case 1: Suppose that $c^* = (a',b')$ exists. By Proposition
\ref{P:corner}(2),  $c^*$ is either a corner $+$ or its
row is filled in.  Furthermore, if $b' < b$ then by Proposition
\ref{P:corner}(1) all the rows below row $a$ are complete, so we are
in Case Z(b), and nothing needs to be proved (the fact that $P_i$
will exit correctly follows from counting wires).  Suppose $b' = b$
so that the row $a'$ is complete.  If $i^* \geq a'$ then we are in
either Case Z(c) or Z(d). The only new boxes we fill are with $0$'s.
It is straightforward to verify the claimed properties.

Now suppose that $i^* < a'$.  Then we are in Case A. Consider $R =
R_{i^*}(D_{i+1})$. By Proposition \ref{P:cols}(2), the box $(i^*,b)$
is either empty or $R$ is complete.  If $R$ is complete, then
because of the way we chose $a'$, it must be filled with 0's.  But
this can be shown to be impossible by considering the wire that
passed through $(i^*,b)$. (The wire $P'$ passing through $(i^*,b)$
did not exit at $i^*_W$, since the current wire $P$ needs to use
this exit.  So this wire traveled down column $b$ through $(i^*,b)$.
But there aren't any $+$'s between $(i^*,b)$ and $(a,b)$ so $P'$
turns at $(a,b)$ which means it is the same wire as $P$, a
contradiction.) By Proposition \ref{P:corner}, Proposition
\ref{P:cols}(3) and the way we chose $a'$, we see that all the boxes
to the left of $(i^*,b)$ have been filled with $0$'s in $D_{i+1}$.
It is easy to see that Propositions \ref{P:well-defined},
\ref{P:cols}, \ref{P:rows}, \ref{P:corner}, and \ref{P:int} continue
to be satisfied in $D_i$. Since the boxes in $\{(c,d) \mid a < c
\leq i^*, b< d \leq n\}$ are all filled with 0's the first and
four-box $\Le$-conditions are immediate (for the four-box condition
one also uses Proposition \ref{P:corner}). Suppose the second
$\Le$-condition is violated by the new $+$ in box $(i^*,b)$.  There
are two possibilities: (i) the $+$ in $(i^*,b)$ is the lower $+$ in
the $\Le$-pattern, or (ii) $+$ in $(i^*,b)$ is the higher $+$ in the
$\Le$-pattern.  In case (i), the 0 in the violating pattern is in
column $i^*$ say at $(x,i^*)$ with a $+$ at $(x,y)$ where $b > y
> i^*$.  Since $(i^*,b)$ is empty in $D_{i+1}$, by Proposition
\ref{P:rows} and Proposition \ref{P:cols}, column $i^*$ is filled in
so all the squares below $(x,i)$ contain $0$'s. But it follows from
the definition of the algorithm that row $i^*$ is already complete
in $D_{i+1}$, a contradiction.  In case (ii), let the 0 in the
$\Le$-pattern be in box $(i^*,j)$.  Then $C_j(D_{i+1})$ is complete
with 0's under $(i^*,j)$ and one deduces that $R_j(D_{i+1})$ is
complete and has a single $+$, which must then be in a column to the
left of column $b$. Consider the wire $P'$ which passed vertically
through the 0 in box $(j,b)$. This wire cannot turn somewhere
between $(j,b)$ and $(a,b)$ for then the second $\Le$-condition is
already violated in $D_{i+1}$. But this means $P' = P_i$, a
contradiction.  This completes the verification of the properties in
Case A (when $c^*$ exists).

Now suppose $i^* \geq a'$.  We have already treated the case $b'
< b$ so we assume $b' \geq b$.  If $R_{a'}(D_{i+1})$ is complete,
then by Proposition \ref{P:rows}(1) all rows below are complete so
we are in Case Z(d).  The argument is again straightforward.

Otherwise, if $R_{a'}(D_{i+1})$ is not complete, then
$c^*$ is a corner $+$, $b' \geq b$ and $(a',b)$ is
empty. The squares between $c_{j-1}'$ and $(a',b)$ are either
unfilled or contain $0$'s (this comes from Proposition \ref{P:cols}(2)
and the way we chose $a'$).
Similarly, the squares
below $(a',b)$ are either unfilled or belong to complete rows.
Note that these
squares do not contain $+$'s for otherwise either $(a',b)$ would be
filled (if the closest such $+$ is normal) or $c^*$ could not be a
$+$ (if the closest such $+$ is special -- this follows from the
description of Case C below).
It is also clear from the definitions
and Proposition \ref{P:cols}(2,3) that
\begin{equation}\label{E:zeros}
\text{there are 0's between $(a',b)$ and $c^*$}.
\end{equation}

Now suppose $i^* \geq b$.  Suppose first that $R_{b}(D_{i+1})$ is
complete.  If $R_b(D_{i+1})$ is filled with $0$'s then since there
are no $+$'s under $(a',b)$ we deduce that all the exits $(b+1)_W,
(b+2)_W, \ldots, n_W$ have been used.  Thus automatically we have
$i^* = b$ and we are in Case B.  Since we are only adding 0's the
claimed properties are easy to verify, except perhaps the
$\Le$-condition.  But row $R_b$ is also filled with $0$'s so there
are no possibilities of any $\Le$-patterns.

%It suffices to prove that any 0 we add between $(a',b)$ and the
%diagonal $(b,b)$ has no $+$'s to its left. Let $(x,b)$ be such a box
%with maximal $x$. The row $x$ is not complete so any $+$ to the left
%of $(x,b)$ is a corner $+$, and by Proposition \ref{P:cols} is
%southwest of $c^*$. By wire-counting we know that a path $P_j$ with
%$j
%> i$ has traveled through $(b,b')$.  This path $P_j$ cannot travel
%from $c^*$ to $(b,b')$ for otherwise $(a',b)$ is not empty.  But
%then we get a contradiction that column $b'$ has two corner $+$'s.

We claim with our assumptions that $R_b(D_{i+1})$ cannot be complete
but not filled with $0$'s.  Suppose this is the case.  Then by
Proposition \ref{P:int}, $n-b+1$ wires have already been drawn
passing through row $b$. Consider the wire $P_j$ which passes
through $(b,b')$. This wire cannot also pass through $c^*$ so by
assumptions there is $x \in (a',b)$ so that $(x,b')$ contains a $+$.
This $+$ must be special, so $R_x(D_{i+1})$ is complete.  But then
there is a wire $P_k$ passing through vertically through box
$(x,b)$, which contradicts our assumptions.

Thus we suppose that $R_b(D_{i+1})$ is not filled in (but still
$i^* \geq b$).  If $i^* = b$ and $w^{-1}(i) > 0$ we are again
in Case B and it is easy to verify all the claimed properties.

Now consider Case C, so $R_b(D_{i+1})$ is filled with 0's apart from
one square. Since exit $b_W$ has not been used, $n-b-1$ of the exits
$(b+1)_W, \ldots, n_W$ have been used in $D_{i+1}$. By a counting
argument,  all rows below $R_{b}$ are filled in, so the
$+$ in $(b,b')$ is the last new $+$ on path $P_i$. This proves
Proposition \ref{P:well-defined}. The other properties are
straightforward to establish except Proposition \ref{P:Le}.  It
follows from Proposition \ref{P:cols}(2,3) and Proposition
\ref{P:rows}(1) that the region $\{(c,d) \mid b \geq c
> a , n \geq d > b\}$ is filled with $0$'s in $D_{i+1}$.  Using the
$0$'s in \eqref{E:zeros} we see that the $+$ in $(b,b')$ is not
involved in any $\Le$-conditions.  Finally we consider the new $0$'s
placed in column $b$.  Only $(a',b)$ has a $+$ to the left,  so
Proposition \ref{P:Le} follows.

In all other situations we are in Case D.  The new $+$ in $(a',b)$
becomes the new corner $+$ in column $b$, unless the row $a'$
becomes complete.  If row $a'$ becomes complete, then column $b$ is
also complete by Proposition \ref{P:rows}(1).  Again the stated
properties are immediate except Proposition \ref{P:Le}.  This last
property follows from the definition of $c^*$ (minimality of row)
and arguments similar to those in Case A.

Case 2: $c^*$ does not exist.  If $i^* < b$, then the argument is
exactly the same as in Case A when $c^*$ does exist.  So we may
suppose $i \geq b$ and we are in Case Z(a).  Consider the columns
$C_k(D_{i+1})$ for $k > b$.  By Proposition \ref{P:cols} they are
either completely filled, or contain a corner $+$.  Any corner $+$'s
in these rows must be below
or on row $b$.  But a counting argument
shows that there is not enough space to fit corner $+$'s, and thus all
rows below and including row $b$ are complete.
The argument is now
the same as the other Case Z arguments.
\end{proof}

We have shown that $\Psi$ maps atomic preference functions to
$\Le$-diagrams.  Recall that $0(D)$ denotes the set of rows of $D$
which are completely filled with 0's.

\begin{lemma}\label{lem:0}
Let $f$ be an atomic preference function with corresponding signed
permutation $w = w(f)$.  Then $0(\Psi(f)) = \{j \mid w(j) > 0\}$.
\end{lemma}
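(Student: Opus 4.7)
The lemma is the sign-matching ingredient needed to identify $\pi(\Psi(f))$ with $w(f)$, and hence to conclude $\Phi(\Psi(f)) = f$. Since the $\Psi$ construction routes the path $P_i$ from $i_N$ to $i^*_W$ with $i^* = |w(f)|^{-1}(i)$, the underlying unsigned permutation $|\pi(\Psi(f))|$ already agrees with $|w(f)|$; only the signs, which are read off from the all-zero rows via the definition of $\Phi$, require checking. My plan is to prove the equality $0_R(\Psi(f)) = \{j : w(f)(j) > 0\}$ by induction on $n-i$ applied to the chain $D_n \subset D_{n-1} \subset \cdots \subset D_1 = \Psi(f)$, maintaining the invariant: for every row $k$ completely filled in $D_i$, the row is filled entirely with $0$'s if and only if $w(f)(k) > 0$.

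The cornerstone is a classification of where $+$'s are placed by the algorithm. Examining the terminating cases, the \emph{first} $+$ to appear in a row $k$ can only arise in one of two ways: as the exit $+$ of Case A in the path $P_{|w(f)(k)|}$ (placed in its own exit row $k$), or as a special $+$ of Case C in some other path $P_m$ with $m^* > k$ (placed at $(b,b')$ in row $b = k$, strictly south of $P_m$'s starting row $m^*$). Normal $+$'s of Case D are always added at $(a',b)$ where row $a'$ already contains $c^*$, so they never initiate the first $+$ in a row. For the forward direction, suppose $w(f)(j) = i > 0$, so $w^{-1}(i) = j > 0$. When $P_i$ reaches the step determining its exit, the structural content of Propositions \ref{P:cols}--\ref{P:corner} guarantees the geometric prerequisites of Case B ($c^*$ exists and $i^* = b$); the condition $w^{-1}(i) > 0$ then activates Case B, and the path exits via the diagonal $*$ at $(j,j)$ without depositing any $+$ in row $j$. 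Moreover, Proposition \ref{P:rows}(1) prevents any later path $P_m$ (with $m < i$) from subsequently triggering Case C at $b = j$, since row $j$ is now all-zero and complete.

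The reverse direction proceeds by contrapositive: assume row $j$ is filled with $0$'s in $\Psi(f)$. The path $P_i$ (with $i = |w(f)(j)|$, $i^* = j$) cannot terminate via Case A, which would have placed a $+$ at $(j,b)$. Among the remaining termination options (Cases Z and B) one obtains $w^{-1}(i) > 0$: for the primary branch of Case B this is immediate, and for the subcases of Z an unwinding using Propositions \ref{P:well-defined}--\ref{P:Le} also yields this. The delicate point is the secondary ``or'' branch of Case B, triggered when $R_b(D_{i+1})$ is filled with $0$'s at $b = i^* = j$; here the inductive invariant applied one step back to $D_{i+1}$ converts the hypothesis into $w(f)(j) > 0$ directly, as required.

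The main obstacle is the treatment of Case C and verifying that special $+$'s are placed only in rows $b$ with $w(f)(b) < 0$ (so the sign correspondence is not spoiled by spurious special $+$'s landing in positive-sign rows). This is exactly where the atomic hypothesis on $f$ enters: it precludes proper leading subwords that consist of all occurrences of $\{1,\dots,r\}$, which are precisely the configurations in which the algorithm would invoke Case C at an inappropriate column and produce an extraneous $+$ in a row destined to be all-zero. Once Case C has been reconciled with the invariant in this way, the remaining subcase verifications are routine, relying on the structural guarantees of Propositions \ref{P:cols}, \ref{P:rows}, and \ref{P:corner} and the explicit description of how corner $+$'s propagate across intermediate diagrams.
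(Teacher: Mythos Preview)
Your overall architecture---case analysis of the algorithm's termination modes, split into forward and reverse inclusions---matches the paper's proof. However, you have misidentified where the atomic hypothesis is used, and this is a genuine gap.

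You claim atomicity is required ``to verify that special $+$'s are placed only in rows $b$ with $w(f)(b) < 0$.'' But look at the precondition of Case C in the algorithm: it explicitly requires $w(b) < 0$. So no special $+$ is ever placed in a positive-sign row, by definition---no appeal to atomicity needed. Consequently your forward direction (once $P_i$ has filled row $j = i^*$ with $0$'s via Case B, no later path can spoil this) is easier than you suggest, and your invocation of Proposition \ref{P:rows}(1) there is beside the point.

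The real use of atomicity is in the reverse direction, and it is not the one you describe. Suppose $w(j) < 0$, set $i = |w(j)|$ so $i^* = j$, and suppose for contradiction that row $j$ ends up all zeros. The delicate subcase is $i^* = i$: here $P_i$ goes from $i_N$ straight down to the diagonal and out at $i_W$, and row $i$ in $D_{i+1}$ is already complete and all zeros. Unwinding what this forces on the wires $P_{i+1}, \ldots, P_n$, one finds that the first $i$ values of $|w|$ form a permutation of $\{1, \ldots, i\}$ with $w(i) < 0$; translating through $\alpha$, this says the first $i$ letters of $f$ are a self-contained block---contradicting atomicity. Your inductive invariant does not capture this, and your ``secondary `or' branch of Case B'' discussion addresses a different configuration (it presupposes $R_b(D_{i+1})$ already complete, which is what needs explaining).

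The paper also handles the reverse direction for $i^* > i$ with a specific mechanism you do not supply: one locates the step $j \in (i, i^*]$ at which row $i^*$ first became complete, argues via Proposition \ref{P:cols} that column $i^*$ carries a corner $+$ in $D_{j+1}$, and concludes that $P_j$ must pass through it and trigger Case C, depositing a $+$ in row $i^*$---a contradiction. Your sketch gestures at ``unwinding using Propositions \ref{P:well-defined}--\ref{P:Le}'' for Cases Z and B but does not isolate this key step.
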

\begin{proof}
Let $D = \Psi(f)$.  Suppose $i^*$ is such that $w^{-1}(i) > 0$;
then setting $j=i^*$, we have $w(j)>0$.   Then
in particular $i^* \geq i$ since $w$ does not have an excedance at
$i^*$. By construction, up till $D_{i+1}$ no $+$'s have been placed in
row $i^*$.  If $i^* = i$ we are done. Otherwise $i^* > i$, and if
column $C_{i^*}(D_{i+1})$ has a corner $+$ we are done since it must
be encountered by the path $P_i$. Suppose otherwise, so $C_{i^*}$ is
complete (by Proposition \ref{P:cols}) and must contain a $+$ in say
$(x,y)$.  But then $R_x(D_{i+1})$ is complete, so by Proposition
\ref{P:rows}(1) so is $R_{i^*}(D_{i+1})$.

Conversely, suppose $i^*$ is such that $w^{-1}(i) < 0$; in other
words, setting
$j=i^*$, we have $w(j)<0$.  If $i^* <
i$ then we are done since the construction of $P_i$ will place a $+$
in row $R_{i^*}$ before it is complete.  So suppose $i^* \geq i$ and
that $R_{i^*}(D_{i+1})$ is completely filled with $0$'s.  If $i^* =
i$ then $f$ is not atomic so we suppose $i ^* > i$.  Let us pick $j
\in (i,i^*]$ such that $R_{i^*}(D_j)$ is complete but
$R_{i^*}(D_{j+1})$ is not. Again with the same argument as above, we
conclude that $C_{i^*}(D_{j+1})$ has a corner $+$, say $c^*$.  Thus
the path $P_j$ travels through $c^*$, at which time it will enter
Case C and create a $+$ in row $i^*$.
\end{proof}

\begin{theorem}
The map $\Psi: \mathcal A_n \to \mathcal D_n$ is a bijection.
\end{theorem}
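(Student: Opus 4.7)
The plan is to show that $\Psi$ and $\Phi$ are mutually inverse. Since we have already constructed $\Phi\colon \mathcal D_n\to\mathcal A_n$ and shown $\Psi\colon \mathcal A_n\to\mathcal D_n$ lands in $\mathcal D_n$, it suffices to verify $\Phi\circ\Psi=\mathrm{id}_{\mathcal A_n}$ and $\Psi\circ\Phi=\mathrm{id}_{\mathcal D_n}$.

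The easier direction is $\Phi\circ\Psi=\mathrm{id}$. Fix $f\in\mathcal A_n$ and set $D=\Psi(f)$, $w=w(f)=\alpha^{-1}(f)$. Recall that $\Phi(D)=\alpha(\pi(D))$, where $\pi(D)$ is the signed permutation read off the wiring diagram of $D$, with entry $j$ given a positive sign exactly when row $j$ of $D$ is all zero. Since $\alpha$ is a bijection (Lemma \ref{lem:alpha}), it suffices to check $\pi(D)=w$. For the underlying permutation, this is built into the algorithm: by Proposition \ref{P:well-defined}(1), the path $P_i$ goes from $i_N$ to $i^*_W$, where $i^*=|w|^{-1}(i)$, so $|\pi(D)|=|w|$. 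For the signs, Lemma \ref{lem:0} gives precisely $0(D)=\{j\mid w(j)>0\}$, which matches the sign convention used to form $\pi(D)$. Hence $\pi(D)=w$ and $\Phi(\Psi(f))=\alpha(w)=f$.

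In particular, $\Psi$ is injective and $\Phi$ restricted to the image of $\Psi$ is its inverse. The remaining and main content is $\Psi\circ\Phi=\mathrm{id}_{\mathcal D_n}$, equivalently, that $\Phi|_{\mathcal D_n}$ is injective, equivalently, that every $D\in\mathcal D_n$ can be recovered as $\Psi(\Phi(D))$. To prove this, I would reconstruct the wires of $D$ from $\Phi(D)$ using the $\Psi$ algorithm, by reverse induction on $i=n,n-1,\dots,1$. At each stage I would check that the wire in $D$ exiting at $i_N$ does exactly what the algorithm prescribes, by showing the algorithm's case distinctions (Z), (A), (B), (C), (D) correspond to mutually exclusive and exhaustive geometric configurations around the current endpoint, which are forced by the three type $(D_n,n)$ $\Le$-conditions of Theorem \ref{thm:LeD}. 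The induction hypothesis is that the subdiagram $D_{i+1}\subset D$ visited by wires $P_n,\dots,P_{i+1}$ agrees with the algorithm's intermediate $D_{i+1}$, in particular it satisfies the structural Propositions \ref{P:cols}--\ref{P:corner}.

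The main obstacle is the case analysis in the inductive step: one must argue that when the wire $P_i$ in $D$ reaches a corner $+$ or is about to turn toward a $+$ further south, the forbidden patterns prevent any other choice of routing besides the greedy northwest one used by $\Psi$. Concretely, the first $\Le$-condition forces the wire to keep turning at the earliest available $+$ above it; the second and four-box $\Le$-conditions ensure that whenever the wire would violate the specific case-C/four-box configurations, the corresponding column or row of $D$ must already be filled with zeros, matching exactly the cases Z(b)--Z(d), B and C of the algorithm. This is parallel in spirit to the wiring-diagram analysis in the proof of Theorems \ref{thm:LeD} and \ref{thm:LeDcomplete}, and I would leverage those geometric observations rather than redo them. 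Once the inductive step is established, $D=\Psi(\Phi(D))$ follows and $\Psi$ is a bijection.
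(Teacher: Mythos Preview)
Your proposal is correct and takes essentially the same approach as the paper: both establish $\Phi\circ\Psi=\mathrm{id}$ from Proposition~\ref{P:well-defined}(1) together with Lemma~\ref{lem:0}, and then reduce the remaining direction to showing that the $\Le$-conditions force every choice in the $\Psi$ algorithm, via a case-by-case analysis of (Z), (A), (B), (C), (D). One small caution: the paper does not in fact recycle the wiring-diagram arguments from Theorems~\ref{thm:LeD} and~\ref{thm:LeDcomplete} for this step, but rather carries out a fresh case analysis leaning on the structural Propositions~\ref{P:cols}--\ref{P:int}, so you should expect to do that work directly rather than import it.
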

\begin{proof}
It follows from the construction (Proposition
\ref{P:well-defined}(1)) that $\Phi \circ \Psi$ is the identity, so
it suffices to show that $\Phi$ is injective.  For an atomic
preference function $f$, we will show that there are no choices in
the construction of $\Psi(f)$ if we require that $\Psi(f)$ is a
$\Le$-diagram $D$ satisfying $\Phi(\Psi(f)) = f$ and satisfying the
condition of Lemma \ref{lem:0} that all $0$ rows
correspond exactly to the $j$ such
that $w(j)>0$.

We may suppose by induction that there are no choices for the
construction of $D_{i+1} = \cup_{j = i+1}^n P_j$.  Now suppose we
have constructed the $+$'s of $P_i^+$ up to $c'_{j-1}$ as in the
stated algorithm.  We will show that the stated algorithm is the
only possible way to extend $P_i$, using the notation and explicit
descriptions given in the proof of the Propositions.

In Cases Z and A we have no choice if we require $P_i$ exits at
$i^*_W$.  So we may assume we are in Cases B, C, or D and that $c^*$
exists.  In particular $(a',b)$ is empty. For $P_i$ to exit at
$i^*_W$ we must fill any unfilled boxes between $c'_{j-1}$ and
$(a',b)$ with $0$'s.  If $i^* = b$ and $w^{-1}(i) < 0$ then the only
way for row $i$ to be completely filled with $0$'s is for $P_i$ to
go to the diagonal and then go straight to $i^*_W$ without turning.
Alternatively, if $R_b(D_{i+1})$ is complete and $i^* \geq b$ then
by the proof of the algorithm we must have $i^* = b$ and
$R_b(D_{i+1})$ filled with 0's.  This shows that Case B is forced.

Otherwise we are in Cases C or D. The first choice is thus $(a',b)$.
Suppose we place a $0$ in $(a',b)$.  Then using the first
$\Le$-condition we see that all boxes below $(a',b)$ must also be
filled with $0$. Since the boxes between $c^*$ and $(a',b)$ are
filled with $0$'s we see using the first and second $\Le$-conditions
that the only place for a $+$ in row $b$ is in box $(b,b')$.  But we
must not have row $i^*$ being completely filled with $0$'s,
otherwise we would be in Case B.  Thus we must turn at $(b,b')$.  We
claim that this is exactly Case C.  It is clear that $i^* > b$.  We
need to show that row $i^*$ in $D_{i+1}$ is filled (necessarily with
$0$'s) except the box $(b,b')$ which is empty.  The columns
$C_k(D_{i+1})$ for $b' > k > b$ do not contain corner $+$'s so by
Proposition \ref{P:cols} they are complete.  The columns
$C_k(D_{i+1})$ for $k < b'$ cannot contain $+$'s in the rows between
$a'$ and $b$, so they are either complete or contain a corner $+$
below row $b$.  Thus row $b$ contains $0$'s in all boxes except
$(b,b')$ in $D_{i+1}$.  If row $b$ is complete in $D_{i+1}$, then a
wire-counting argument shows that exit $i^*_W$ has been used.  But
the wire passing through $i^*_W$ cannot have gone straight from
$c^*$ to $(b,b')$ for otherwise $(a',b)$ would not be empty in
$D_{i+1}$. Thus there must be a complete row between rows $a'$ and
$b$, which contradicts either Proposition \ref{P:int} or the fact
that the current wire will travel down column $b$ to the diagonal
from $(a',b)$.

Thus when there is a choice, a $0$ is placed in $(a',b)$ only in
Case C.  However, if the diagram satisfies the conditions of Case C
and we place a $+$ in $(a',b)$ instead then the wire $P_i$ will exit
in row $b$, contradicting the estimate $i^* > b$.  Thus Case C is
forced by our assumptions.  In all other cases, we will perform Case
D.

\end{proof}

\subsection{Example}\label{ex:atomic}
Suppose $n = 9$ and $f = (4,6,3,1,7,5,7,2,1)$.
Then $\w = w(f) = -6 \ -8 \ -3 \ -1 \ -9 \ 5 \ -7
\ 4 \ -2$. The construction of $\Psi(f)$ proceeds as follows:

First  $i=9$ and $i^*=5$.
When $j=1$, we have $(a,b)=(0,9)$ and $c^*$ does not exist.  We are
in Case A so $c_1=(5,9)$ and $D_9$ is as shown below.
$$\tableau*[sbY]{\bl&\bl 9_N&\bl 8_N&\bl 7_N&\bl 6_N&\bl 5_N&\bl 4_N&\bl 3_N&\bl 2_N&\bl 1_N\\
\bl 1_W&\tf 0 &&&&&&&&*\\
\bl 2_W&\tf 0 &&&&&&&*\\
\bl 3_W&\tf 0&&&&&&*\\
\bl 4_W&\tf 0&&&&&*\\
\bl 5_W&\tf +&&&&*\\
\bl 6_W&&&&*\\
\bl 7_W&&&*\\
\bl 8_W&&*\\
\bl 9_W&*\\
}$$

Now $i=8$ and $i^*=2$.  When $j=1$, we have $(a,b)=(0,8)$,
$a'=5$, and $c^*=(5,9)$.  We are in Case A
so $c_1 = (2,8)$ and $D_8$ is below.
$$\tableau*[sbY]{\bl&\bl 9_N&\bl 8_N&\bl 7_N&\bl 6_N&\bl 5_N&\bl 4_N&\bl 3_N&\bl 2_N&\bl 1_N\\
\bl 1_W&0&\tf 0&&&&&&&*\\
\bl 2_W&0&\tf +&&&&&&*\\
\bl 3_W&0&&&&&&*\\
\bl 4_W&0&&&&&*\\
\bl 5_W&+&&&&*\\
\bl 6_W&&&&*\\
\bl 7_W&&&*\\
\bl 8_W&&*\\
\bl 9_W&*\\
}$$

Now $i=7$ and $i^*=7$.  When $j=1$, $(a,b)=(0,7)$, $a'=2$, and
$c^*=(2,8)$.  This is Case D so $c_1=(2,7)$ and $c'_1=(2,8)$.
When $j=2$, $c'_1=(2,8)=(a,b)$, $a'=5$, and $c^*=(5,9)$.
This is Case D so $c_2=(5,8)$ and $c'_2=(5,9)$.
When $j=3$, $(a,b)=(5,9)$.  Neither $a'$ nor $c^*$ exist so
we are in Case A, $c_3=(7,9)$, and $D_7$ is below.
$$\tableau*[sbY]{\bl&\bl 9_N&\bl 8_N&\bl 7_N&\bl 6_N&\bl 5_N&\bl 4_N&\bl 3_N&\bl 2_N&\bl 1_N\\
\bl 1_W&0&0&\tf 0&&&&&&*\\
\bl 2_W&0&+&\tf +&&&&&*\\
\bl 3_W&0&\tf 0&&&&&*\\
\bl 4_W&0&\tf 0&&&&*\\
\bl 5_W&+&\tf +&&&*\\
\bl 6_W&\tf 0&&&*\\
\bl 7_W&\tf +&&*\\
\bl 8_W&&*\\
\bl 9_W&*\\
}$$

Now $i=6$ and $i^*=1$.  When $j=1$, $(a,b)=(0,6)$, $a'=2$,
and $c^*=(2,7)$.  We are in Case A so $c_1=(1,6)$ and $D_6$ is
below.
$$\tableau*[sbY]{\bl&\bl 9_N&\bl 8_N&\bl 7_N&\bl 6_N&\bl 5_N&\bl 4_N&\bl 3_N&\bl 2_N&\bl 1_N\\
\bl 1_W&0&0&0&\tf +&&&&&*\\
\bl 2_W&0&+&+&&&&&*\\
\bl 3_W&0&0&&&&&*\\
\bl 4_W&0&0&&&&*\\
\bl 5_W&+&+&&&*\\
\bl 6_W&0&&&*\\
\bl 7_W&+&&*\\
\bl 8_W&&*\\
\bl 9_W&*\\
}$$

Now $i=5$ and $i^*=6$.  When $j=1$, we have $(a,b)=(0,5)$,
$a'=1$, and $c^*=(1,6)$.  This is Case D so $c_1=(1,6)$ and
$c'_1=(1,6)$.  When $j=2$ we have $(a,b)=(1,6)$, $a'=2$,
and $c^*=(2,7)$.  This is Case B so there are no new $+$'s;
$D_5$ is below.
$$\tableau*[sbY]{\bl&\bl 9_N&\bl 8_N&\bl 7_N&\bl 6_N&\bl 5_N&\bl 4_N&\bl 3_N&\bl 2_N&\bl 1_N\\
\bl 1_W&0&0&0&+&\tf +&&&&*\\
\bl 2_W&0&+&+&\tf 0&&&&*\\
\bl 3_W&0&0&&\tf 0&&&*\\
\bl 4_W&0&0&&\tf 0&&*\\
\bl 5_W&+&+&&\tf 0&*\\
\bl 6_W&0&\tf 0&\tf 0&*\\
\bl 7_W&+&&*\\
\bl 8_W&&*\\
\bl 9_W&*\\
}$$

Now $i=4$ and $i^*=8$.  When $j=1$, $(a,b)=(0,4)$, $a'=1$,
and $c^*=(1,5)$.  This is Case D so $c_1=(1,4)$ and $c'_1=(1,5)$.
When $j=2$, $(a,b)=(1,5)$, $a'=2$, and  $c^*=(2,7)$.  This is again
Case D so $c_2=(2,5)$ and $c'_2=(2,7)$.  When $j=3$,
$(a,b)=(2,7)$, $a'=5$, and $c^*=(5,8)$.  This is still Case D so
$c_3=(5,7)$ and $c'_3=(5,8)$.  When $j=4$, $(a,b)=(5,8)$,
$a'=7$, and $c^*=(7,9)$.  This is Case B so there are no new
$+$'s; $D_4$ is below.
$$\tableau*[sbY]{\bl&\bl 9_N&\bl 8_N&\bl 7_N&\bl 6_N&\bl 5_N&\bl 4_N&\bl 3_N&\bl 2_N&\bl 1_N\\
\bl 1_W&0&0&0&+&+&\tf +&&&*\\
\bl 2_W&0&+&+&0&\tf +&&&*\\
\bl 3_W&0&0&\tf 0&0&&&*\\
\bl 4_W&0&0&\tf 0&0&&*\\
\bl 5_W&+&+&\tf +&0&*\\
\bl 6_W&0&0&0&*\\
\bl 7_W&+&\tf 0&*\\
\bl 8_W&\tf 0&*\\
\bl 9_W&*\\
}$$

Now $i=3$ and $i^*=3$.  When $j=1$, $(a,b)=(0,3)$, $a'=1$, and
$c^*=(1,4)$.  This is Case D so $c_1=(1,3)$ and $c'_1=(1,4)$.
When $j=2$, $(a,b)=(1,4)$, $a'=2$, and $c^*=(2,5)$.  This is
Case D so $c_2=(2,4)$ and $c'_2=(2,5)$.  When $j=3$,
$(a,b)=(2,5)$, $a'=5$, and $c^*=(5,7)$.  This is Case A so
$c_3 = (3,5)$; $D_3$ is below.
$$\tableau*[sbY]{\bl&\bl 9_N&\bl 8_N&\bl 7_N&\bl 6_N&\bl 5_N&\bl 4_N&\bl 3_N&\bl 2_N&\bl 1_N\\
\bl 1_W&0&0&0&+&+&+&\tf +&&*\\
\bl 2_W&0&+&+&0&+&\tf +&&*\\
\bl 3_W&0&0&0&0&\tf +&&*\\
\bl 4_W&0&0&0&0&&*\\
\bl 5_W&+&+&+&0&*\\
\bl 6_W&0&0&0&*\\
\bl 7_W&+&0&*\\
\bl 8_W&0&*\\
\bl 9_W&*\\
}$$

Finally $i=2$ and $i^*=9$.  When $j=1$, $(a,b)=(0,2)$, $a'=1$, and
$c^*=(1,3)$.  This is Case D so $c_1 =(1,2)$ and $c'=(1,3)$.  When
$j=2$, $(a,b)=(1,3)$, $a'=2$, and $c^*=(2,4)$.  This is Case D so
$c_2 = (2,3)$ and $c'_2 = (2,4)$.  When $j=3$, $(a,b)=(2,4)$,
$a'=3$, and $c^*=(3,5)$.  This is Case C so $c_3=(4,5)$;
$D_2$ is below.
$$\tableau*[sbY]{\bl&\bl 9_N&\bl 8_N&\bl 7_N&\bl 6_N&\bl 5_N&\bl 4_N&\bl 3_N&\bl 2_N&\bl 1_N\\
\bl 1_W&0&0&0&+&+&+&+&\tf +&*\\
\bl 2_W&0&+&+&0&+&+&\tf +&*\\
\bl 3_W&0&0&0&0&+&\tf 0&*\\
\bl 4_W&0&0&0&0&\tf +&*\\
\bl 5_W&+&+&+&0&*\\
\bl 6_W&0&0&0&*\\
\bl 7_W&+&0&*\\
\bl 8_W&0&*\\
\bl 9_W&*\\
}$$

%\addcontentsline{toc}{section}{References}

\end{document}